\newtheorem{thrm}{Theorem}[section]
\newtheorem{lem}[thrm]{Lemma}
\newtheorem{prop}[thrm]{Proposition}
\theoremstyle{definition}
\newtheorem{definition}[thrm]{Definition}
\newtheorem{remark}[thrm]{Remark}
\numberwithin{equation}{section}
\author{A. Ivanov}
\address{
Institute of Mathematics\\
Silesian University of Technology\\
ul. Kaszubska 23, Gliwice, 44-101 Poland}
\email{Aleksander.Iwanow@polsl.pl}
\thanks{The author is supported by Polish National Science Centre grant DEC2011/01/B/ST1/01406}
\keywords{Continuous structures, Hilbert spaces, Quantum circuits}
\subjclass{Primary 03C57, Secondary 03C52, 03B70, 03B50} 
\begin{document}

\title[Operator expansions of finite dimensional Hilbert spaces]{Continuous theory of operator expansions of finite dimensional Hilbert spaces, 
continuous structures of quantum circuits and decidability}

\begin{abstract}
We consider continuous structures which are obtained 
from finite dimensional Hilbert spaces over $\mathbb{C}$ 
by adding some unitary operators. 
Quantum automata and circuits are naturally interpretable 
in such structures. 
We consider appropriate algorithmic problems concerning 
continuous theories of natural classes of these structures. 
\end{abstract}
\maketitle

\section{Introduction} \label{sect1}

Continuous logic has become the basic model theoretic 
tool for Hilbert spaces and ${\bf C}^*$-algebras: 
see \cite{BYBHU}, \cite{BYU} and \cite{FHS}. 
This suggests that quantum circuits, quantum automata 
and quantum computations in general 
can be defined in appropriate continuous structures 
and studied by means of continuous logic.  
The paper presents an attempt of this approach. 
The main object of our paper are finite dimensional 
Hilbert spaces in the language expanded by 
a finite family of unitary operators. 
We call them dynamical Hilbert spaces. 
\parskip0pt 

It is worth noting that a finite dimensional  
 Hilbert space cannot be considered as 
an object interesting on its own from the point of view 
of continuous model theory.  
This case corresponds to 'finite objects' in model theory  
(its $n$-balls are compact). 
In our paper we study continuous theories of 
{\em classes} of these structures. 
This naturally leads to pseudo finite dimensional 
structures and to questions connected with 
approximations of groups by metric groups. 
\parskip0pt 

All necessary information on  
continuous logic  will be described in 
the next section. 

The main results of the paper 
concern decidability of continuous theories of 
classes of dynamical Hilbert spaces and 
so called 'marked dynamical Hilbert spaces'. 
In Section 4 we show that decidability questions for the class of 
finite dimensional dynamical Hilbert spaces are connected with 
property MF, one of the most interesting properties in 
the topic of approximations by metric groups \cite{CDE}.  
Marked dynamical Hilbert  spaces are defined in Section 5 
as expansions of finite dimensional dynamical Hilbert spaces 
by unary discrete predicates. 
We will see that this procedure is essential 
for expressive power of the language.  
In particular there are natural subclasses 
of marked dynamical Hilbert spaces where undecidable 
first order theories of some classes of finite structures 
can be interpreted.  
These results are partially motivated by \cite{DJK}, 
where algorithmic problems for quantum automata were studied. 
In the beginning of Section 5 we give 
a more detailed introduction to 
these issues. 

Section 3 contains some general 
observations concerning decidability.  
We think that this section is interesting by itself. 
It is naturally connected with the material 
of \cite{BYP}, \cite{DGP} and \cite{GH}, 
where decidability questions for continuous theories were initiated.  

The author is grateful to Isaac Goldbring for the suggestion that 
the universal theory of dynamical (finite dimensional) Hilbert spaces is decidable and to Udi Hrushovski for several remarks concerning pseudocompactness (see Section 4.4).

\section{Continuous structures.} 

\subsection{General preliminaries} 
We fix a countable continuous signature 
$$
L=\{ d,R_1 ,...,R_k ,..., F_1 ,..., F_l ,...\}. 
$$  
Let us recall that a {\em metric $L$-structure} 
is a complete metric space $(M,d)$ with $d$ bounded by 1, 
along with a family of uniformly continuous operations on $M$ 
and a family of predicates $R_i$, i.e. uniformly continuous maps 
from appropriate $M^{k_i}$ to $[0,1]$.   
It is usually assumed that $L$ assigns to each predicate symbol $R_i$ 
a continuity modulus $\gamma_i :[0,1] \rightarrow [0,1]$ so that 
any metric structure $M$ of the signature $L$ satisfies the property  
that if $d(x_j ,x'_j ) <\gamma_i (\varepsilon )$ with $1\le j\le k_i$ 
then the inequality 
$$ 
|R_i (x_1 ,...,x_j ,...,x_{k_i}) - R_i (x_1 ,...,x'_j ,...,x_{k_i})| < \varepsilon  
$$ 
holds for the corresponding predicate of $M$.  
It happens very often that $\gamma_i$ coincides with $id$. 
In this case we do not mention the appropriate modulus. 
Similarly, the language also includes continuity moduli for functional symbols. 

Note that each countable structure can be considered 
as a complete metric structure with the discrete $\{ 0,1\}$-metric. 

By completeness continuous substructures of a continuous structure are always closed subsets. 

Atomic formulas are the expressions of the form $R_i (t_1 ,...,t_r )$, 
$d(t_1 ,t_2 )$, where $t_i$ are simply classical terms (built from functional $L$-symbols). 
We define {\em formulas} to be expressions built from 
0,1 and atomic formulas by applications of the following functions: 
$$ 
x/2  \mbox{ , } x\dot- y= \mathsf{max} (x-y,0) \mbox{ , } \mathsf{min}(x ,y )  \mbox{ , } \mathsf{max}(x ,y )
\mbox{ , } |x-y| \mbox{ , } 
$$ 
$$ 
\neg (x) =1-x \mbox{ , } x\dot+ y= \mathsf{min}(x+y, 1) \mbox{ , } x \cdot y \mbox{ , } \mathsf{sup}_x \mbox{ and } \mathsf{inf}_x . 
$$   
{\em Statements} concerning metric structures are usually 
formulated in the form 
$$
\phi = 0 , 
$$ 
where $\phi$ is a formula. 
Sometimes statements are called an {\em condition} or $\bar{x}$-conditions 
(when $\phi$ depends on $\bar{x}$); we will use both names. 
A {\em theory} is a set of statements without free variables 
(here $\mathsf{sup}_x$ and $\mathsf{inf}_x$ play the role of quantifiers). 
If $\mathcal{K}$ is a class of continuous $L$-structures then 
$Th(\mathcal{ K})$ denotes the set of all conditions without free variables 
which hold in all structures of $\mathcal{ K}$. 
   
We sometimes replace conditions of the form $\phi \dot{-} \varepsilon =0$ 
where $\varepsilon \in [0,1]$ by more convenient 
expressions $\phi \le \varepsilon$.    
When a formula $\phi$ is of the form 
$\mathsf{sup}_{x_1}  \mathsf{sup}_{x_2}\ldots \mathsf{sup}_{x_1} \psi$, 
where $\psi$ is quantifier free, we say that $\phi$ is {\em universal}.
   
It is worth noting that any formula is a $\gamma$-uniformly continuous 
function from the appropriate power of $M$ to $[0,1]$, 
where $\gamma$ is the minimum of continuity moduli of $L$-symbols 
appearing in the formula. 

The condition that the metric is bounded by $1$ is not necessary. 
It is often assumed that $d$ is bounded by some rational number $d_0$. 
In this case the (truncated) functions above are appropriately modified.  
Sometimes predicates of continuous structures map $M^n$ to some 
$[q_1 ,q_2 ]$ where $q_1 ,q_2 \in \mathbb{Q}$.  
It is only worth noting that we always assume that when we fix 
an interval $[q_1 ,q_2 ]$ for values of continuous formulas, 
connectives are chosen so that they  cannot give values outside this interval. 

\bigskip 

Following Section 4.2 of \cite{FHS} we define a topology 
on $L$-formulas relative to a given continuous theory $T$. 
For $n$-ary formulas $\phi$ and $\psi$ of the same sort set 
$$ 
{\bf d}^T_{\bar{x}} (\phi ,\psi) = \mathsf{sup} \{ |\phi (\bar{a}) -\psi (\bar{a} )|: \bar{a} \in M, M\models T\} . 
$$ 
The function ${\bf d}^T_{\bar{x}}$ is a pseudometric. 

\begin{definition} \label{FaHaSh} 
The language $L$ is called {\em separable} with respect to  
$T$ if for any tuple $\bar{x}$ 
the density character of ${\bf d}^T_{\bar{x}}$ is countable. 
\end{definition} 

By Proposition 4.5 of \cite{FHS} when $L$ is separable, 
for every $M\models T$ the set of all interpretations of $L$-formulas 
in $M$ is separable in the uniform topology. 

\bigskip 

The paper \cite{BYP} gives  
fourteen axioms of continuous 
first order logic, denoted by  
(A1) - (A14), and the corresponding version of 
{\em modus ponens}: 
$$ 
\frac{\phi \mbox{ , } \psi \dot{-} \phi}{\psi} . 
\mbox{ where } \phi , \psi \mbox{ are continuous formulas.} 
$$ 
Corollary 9.6 of \cite{BYP} states:  
\begin{quote} 
Let $\Gamma$ be a set of continuous formulas 
of a continuous signature $L$ with a metric. 
Let $\phi$ be a continuous $L$-formula. 
Then the following conditions are equivalent: \\ 
(i) for any continuous structure $M$ and any $M$-assignment of variables, 
if $M$ satisfies all statements $\psi = 0$, 
$\psi \in \Gamma$, then $M$ satisfies $\phi =0$; \\ 
(ii) $\Gamma \vdash \phi \dot{-} 2^{-n}$ for all $n\in \omega$. 
\end{quote} 
It is called {\em approximated strong 
completeness for continuous first-order logic}. 
The following statement is Corollary 9.8 from \cite{BYP}. 
\begin{quote} 
Under circumstances above 
the following values are the same: \\  
(i) $\mathsf{sup} \{ \phi^{M} :  \mbox{ for all } M \models \Gamma =0\}$; \\ 
(ii) $\mathsf{inf} \{ p\in \mathbb{Q}: \Gamma \vdash   \phi \dot{-} p\}$. 
\end{quote} 
We denote this value by $\phi^{\circ}$ and call it the {\em degree of truth  of} $\phi$ {\em with respect to} $\Gamma$.  

If the language $L$ is computable, the set of 
all continuous $L$-formulas and the set of all $L$-conditions of the form 
$$ 
\phi \le \frac{m}{n} \mbox{ , where } \frac{m}{n}\in \mathbb{Q}_+ , 
$$ 
are computable. 
Moreover if $\Gamma$ is a computably enumerable 
set of formulas, then the set $\{ \phi : \Gamma \vdash \phi \}$ 
is computably enumerable. 

Corollary 9.11 of \cite{BYP} states that when $\Gamma$ 
is computably enumerable 
and $\Gamma=0$ axiomatizes a complete theory, 
then the value of $\phi$ in models of $\Gamma =0$ is 
a recursive real which is  uniformly computable from $\phi$.  
This exactly means that the corresponding 
complete theory is {\em decidable} (see Section 2). 
Note that in this case the value of $\phi$ as above coincides with 
$\phi^{\circ}$.


\subsection{Hilbert spaces} 
We treat a Hilbert space over $\mathbb{R}$ 
exactly as in Section 15 of \cite{BYBHU}. 
We identify it with a many-sorted metric structure 
$$
(\{ B_n\}_{n\in \omega} ,0,\{ I_{mn} \}_{m<n} ,
\{ \lambda_r \}_{r\in\mathbb{R}}, +,-,\langle \rangle ),
$$
where $B_n$ is the ball of elements of norm $\le n$, 
$I_{mn}: B_m\rightarrow B_n$ is the inclusion map, 
$\lambda_{r}: B_m\rightarrow B_{km}$ is scalar 
multiplication by $r$, with $k$ the unique integer 
satisfying $k\ge 1$ and $k-1 \le |r|<k$; 
furthermore, $+,- : B_n \times B_n \rightarrow B_{2n}$ 
are vector addition and subtraction and 
$\langle \rangle : B_n \rightarrow [-n^2 ,n^2 ]$ 
is the predicate of the inner product. 
The metric on each sort is given by 
$d(x,y) =\sqrt{ \langle x-y, x-y \rangle }$.   
For every operation the continuity modulus is standard.  
For example in the case of $\lambda_r$ this is $\frac{z}{|r|}$. 

Stating existence of infinite approximations 
of orthonormal bases by axioms of the form 
$$ 
\mathsf{inf}_{x_1 ,...,x_n\in B_1} \mathsf{max}_{1\le i<j\le n} (|\langle x_i ,x_j\rangle -\delta_{i,j} |) =0 \mbox{ , } n\in \omega , 
$$
$$ 
\delta_{i,j} \in \{ 0,1\} \mbox{ with } \delta_{i,j} =1 \leftrightarrow i=j , 
$$   
we axiomatize  infinite dimensional Hilbert spaces. 
By \cite{BYBHU} they form the class of models of a complete 
theory which is $\kappa$-categorical for all infinite $\kappa$, 
and admits elimination of quantifiers. 

When we assume that the space is finite dimensional 
all sorts $B_n$ become compact. 
This corresponds to the case 
of finite structures in ordinary 
model theory. 
The statement that the dimension equals $n$ 
can be described by the following statement.  
$$ 
\mathsf{inf}_{y_1 ,...,y_n \in B_1} 
\mathsf{max} (\mathsf{max}_{1\le i\le n} (|\langle y_i ,y_i\rangle -1|), 
$$
$$
\mathsf{sup}_{x\in B_1} (| ( \langle x,x\rangle  - 
|\langle x,y_1 \rangle |^2 - ... ...-|\langle x, y_n \rangle |^2 ) |)=0 .
$$ 
The corresponding continuous theory 
admits elimination of quantifiers. 
This follows by the argument of  
Lemma 15.1 from \cite{BYBHU}.

This approach can be naturally extended to complex Hilbert spaces, 
$$
(\{ B_n\}_{n\in \omega} ,0,\{ I_{mn} \}_{m<n} ,
\{ \lambda_c \}_{c\in\mathbb{C}}, +,-,\langle \rangle_{Re} , \langle \rangle_{Im} ). 
$$
We only extend the family 
$\lambda_{r}: B_m\rightarrow B_{km}$, $r\in \mathbb{R}$, 
to a family $\lambda_{c}: B_m\rightarrow B_{km}$, $c\in \mathbb{C}$, 
of scalar products by $c\in\mathbb{C}$, with $k$ 
the unique integer satisfying $k\ge 1$ and $k-1 \le |c|<k$. 

We also introduce $Re$- and $Im$-parts of the inner product. 

If we remove from the signature of complex Hilbert spaces 
all scalar products by $c\in \mathbb{C}\setminus \mathbb{Q}[i]$, 
we obtain a countable subsignature 
$$
(\{ B_n\}_{n\in \omega} ,0,\{ I_{mn} \}_{m<n} ,
\{ \lambda_c \}_{c\in\mathbb{Q}[i]}, +,-,\langle \rangle_{Re} , \langle \rangle_{Im} ),
$$
which is {\em dense} in the original one: \\ 
if we present $c\in \mathbb{C}$ by a sequence $\{ q_i \}$ 
from $\mathbb{Q}[i]$ converging to $c$, 
then the choice of the continuity moduli of 
the restricted signature still guarantees that 
in any sort $B_n$ the functions $\lambda_{q_i}$ 
form a sequence which converges to  $\lambda_c$ 
with respect to the metric 
$$ 
\mathsf{sup}_{x\in B_n} \{ |f^M (x) - g^M (x)| : M \mbox{ is a model of the theory of Hilbert spaces } \}.  
$$  
This obviously implies that the original language 
of Hilbert spaces is {\em separable}.  

\bigskip 

To study dynamical evolutions of quantum circuits 
we introduce the following expansion of 
Hilbert spaces.
Let us fix a natural number $t$ and consider 
the class of {\em dynamical Hilbert spaces} 
in the extended signature 
$$
( \{ B_n\}_{n\in \omega} ,0,\{ I_{mn} \}_{m<n} ,
\{ \lambda_c \}_{c\in\mathbb{Q}[i]}, +,-,\langle \rangle_{Re} , \langle \rangle_{Im}, U_1 ,...,U_t  ),
$$
where $U_j$, $1\le j \le t$,  are  symbols of unitary 
operators of $\mathbb{H}$.  
We may assume that all $U_j$ are defined only on $B_1$. 
For convenience we add to each $U_i$ the symbol 
$U'_{i}$ for the operator $U^{-1}_i$. 
Then we also add the axioms 
$\mathsf{sup}_{v\in B_1} d(U'_i U_i (v) ,v) \le 0$ and 
$\mathsf{sup}_{v\in B_1} d(U_i U'_i (v) ,v) \le 0$. 
We will not mention this below.  

It is clear that this language is computable and  
is dense in the $\bar{U}$-extension of the standard language of 
the theory of Hilbert spaces. 
The main results of the paper concern decidability of 
theories in this language.

\begin{lem} \label{comp} 
Assume that a structure of the form above 
is $n$-dimensional where $n\in \mathbb{N}$.  
Then the complete continuous theory of this structure 
is axiomatized by the standard axioms of Hilbert spaces, 
the axioms stating that each 
$U_j$ is a unitary operator and the following axioms 
describing the matrices of $U_j$ in some 
(fixed) orthogonal normal  basis: 
$$ 
\mathsf{inf}_{y_1 ,...,y_n \in B_1} \mathsf{max}
(\mathsf{max}_{1\le i\le n} (|\langle y_i ,y_i\rangle -1|), 
$$
$$
\mathsf{sup}_{x\in B_1} (| ( \langle x,x\rangle  - |\langle x,y_1 \rangle |^2 - ... 
...-|\langle x, y_n \rangle |^2 ) |), 
$$
$$ 
\mathsf{max}_{1\le l\le n} \mathsf{max}_{1\le j\le t} (\parallel U_j (y_l )-\sum \lambda_{c_{j,l,k}} (y_k ) \parallel \dot{-} \varepsilon_l ))\le 0 , 
$$ 
$$
\mbox{ where } \varepsilon_l \in \mathbb{Q} 
\mbox{ and } c_{j,l,k} \in \mathbb{Q}[i] 
\mbox{ are appropriate approximations } 
$$ 
$$ 
\mbox{ of entries of matrices for } 
U_1 ,\ldots ,U_t . 
$$ 
\end{lem}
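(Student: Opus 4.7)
The plan is to show that every model $M$ of the listed axiom scheme is $L$-isomorphic to $M_0$. Since $M_0$ is manifestly a model, this yields both completeness of the axiomatisation and the identification of it with $Th(M_0)$.

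The first step is to show that $\dim M = n$. Using the basis axiom $\inf_{y_1,\ldots,y_n}\Phi=0$, for each $k$ pick $y^{(k)}_1,\ldots,y^{(k)}_n\in B_1$ witnessing $\Phi<1/k$. Substituting $x=y^{(k)}_i$ into the Parseval-type clause of $\Phi$ and combining with the norm clause yields $\sum_{j\neq i}|\langle y^{(k)}_i,y^{(k)}_j\rangle|^2 = O(1/k)$, so a Gram--Schmidt correction produces exact orthonormal tuples $\tilde y^{(k)}_1,\ldots,\tilde y^{(k)}_n$ at distance $O(1/\sqrt{k})$. The full Parseval clause then gives $\|x-P_k(x)\|=o(1)$ uniformly for $x\in B_1$, where $P_k$ is orthogonal projection onto $V_k=\mathrm{span}(\tilde y^{(k)}_i)$; projecting any orthonormal family of size $m$ in $M$ into $V_k$ for large $k$ forces $m\le n$, so $\dim M = n$.

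Now that $M$ is finite-dimensional of dimension $n$, its unit ball and unitary group are compact. I would then invoke the matrix-entry axioms with ever finer rational approximations $c^{(p)}_{j,l,k}\to c_{j,l,k}$ and $\varepsilon^{(p)}_l\to 0$: the scheme, together with the compactness of $B_1$, furnishes for each $p$ an exact orthonormal basis $\bar y^{(p)}$ of $M$ satisfying $\|U_j(\bar y^{(p)}_l)-\sum_k\lambda_{c^{(p)}_{j,l,k}}(\bar y^{(p)}_k)\|\le \varepsilon^{(p)}_l$. Passing to a convergent subsequence in the compact Stiefel manifold of orthonormal $n$-frames of $M$, one obtains an exact orthonormal basis $\bar y$ of $M$ in which the matrix of every $U_j$ is precisely $(c_{j,l,k})$, which by construction equals the matrix of $U^{M_0}_j$ in the distinguished basis of $M_0$. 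The unique linear isometry $M\to M_0$ sending $\bar y_l$ to the corresponding vector of $M_0$ is then an $L$-isomorphism: it preserves the inner product and, by the matching of matrices, intertwines every $U_j$ with its counterpart in $M_0$. Hence $M\cong M_0$.

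The central technical point, which I would write out with care, is the conversion of the $\inf=0$ schemes into exact witnesses---first an exact orthonormal basis, then exact matrix identities for the $U_j$. Both steps rest on the compactness inherent in finite-dimensional Hilbert spaces, and this is precisely where the hypothesis $\dim = n$ enters in an essential way; in particular, no saturation argument is required to pass from approximate to exact witnesses, in contrast with the infinite-dimensional situation.
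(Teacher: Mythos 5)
Your proposal is correct and follows essentially the same route as the paper's (very terse) proof: show that any model of the axioms is $n$-dimensional, use compactness of $B_1$ to pass from approximate to exact witnesses (an orthonormal basis in which each $U_j$ has the prescribed matrix), and conclude by uniqueness up to isometry. You simply write out the details that the paper dismisses as obvious.
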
 

\begin{proof} 
Any model with these axioms is 
an $n$-dimensional space. 
Thus by compactness of $B_1$ there is an appropriate basis 
where the values of $U_j (y_l )$ have the coordinates  
described in the axioms. 
This model is unique up to isometry. 
Thus the lemma is obvious. 
\end{proof} 

\subsection{Unitary representations} 
When we consider a language containing countably many 
operators $U_i$, any unitary representation of a countable group $G$ 
can be considered as a dynamical Hilbert space in this language. 
For example we can add an operator for every element of $G$. 

We will use several notions from the area of unitary representations. 
We firstly remind the reader that the left regular representation of $G$ 
is obtained by the action of $G$ on $l^2 (G)$ 
defined by the unitary operators $U_g : f(h) \rightarrow f(g^{-1}h)$.  
The $*$-algebra generated by all $U_g$ 
is just $\mathbb{C}G$. 

The following notion is taken from Section F of \cite{BHV}. 

\begin{definition} \label{alco} 
Let $\pi$ and $\rho$ be unitary representations of $G$ and 
$H_{\rho}$, $H_{\pi}$ be the corresponding dynamical Hilbert spaces.  
Let $\varepsilon > 0$ and $F $ be a finite subset of $G$. 
We say that $\rho$ is $(\varepsilon , F)$-{\em contained in} $\pi$ 
if for every $v_1 , \ldots , v_n \in H_{\rho}$, there
are $w_1 ,\ldots ,w_n \in H_{\pi}$ such that 
$$ 
|\langle \rho (g)v_i , v_j\rangle - \langle \pi (g)w_i ,w_j\rangle | < \varepsilon 
\mbox{  for all }g \in F. 
$$ 
We say that $\rho$ is {\em weakly contained in} $\pi$ and write 
$\rho \prec \pi$ if $\rho$ is $(\varepsilon , F)$-contained 
in $\pi$ for every $\varepsilon$ and finite  $F$.
\end{definition} 

When $G$ is $t$-generated we apply the definition above 
to the corresponding homomorphisms of the form  
$G \rightarrow \langle U_1 \ldots , U_t \rangle$. 

In the case when $t=1$ some standard material from functional 
analysis can be applied. 
We remind the reader that a complex number $\lambda$ 
is said to be a {\em regular value} of a operator $U$ if
there exists $(U -\lambda Id )^{-1}$, which is a bounded linear 
operator and is defined on a dense subspace of the space. 
The {\em resolvent set} of $U$ is the set of all regular values of $U$. 
The spectrum of $U$, denoted by $\sigma (U)$, 
is the complement of the resolvent set. 
The set of isolated points of $\sigma (U)$ of finite multiplicity 
is called the {\em finite spectrum} and is denoted by $\sigma_{fin} (U)$. 
The set $\sigma_e (U) = \sigma (U) \setminus \sigma_{fin} (U)$ 
is called the {\em essential spectrum} of $U$.

We will use this material in combination with the following theorem of C. Ward Henson. 
\begin{quote} 
Let $(H ,U)$ and $(H',U')$ be dynamical Hilbert spaces with one operator. 
These spaces are elementarily equivalent in continuous logic if and only if 
they have the same essential spectra $\sigma_e (U)$ and $\sigma_e (U')$ and 
for any $r\in S^1 \setminus \sigma_e (U)$ we have 
$$
\mathsf{dim} \{ x\in H: Ux = rx \} = \mathsf{dim} \{ c\in H' : U'x = rx \}. 
$$  
\end{quote} 
A proof of this theorem in the case of countable spectrum can be found 
in \cite{argober}.

\section{Decidability/undecidability of continuous theories} 

In this section we assume that the signature $L$ 
is computable and values of formulas are in $[0,1]$. 
The interval $[0,1]$ can be obviously replaced 
by any compact interval. 
We start with the following definition from 
\cite{BYP}. 

\begin{definition} 
A continuous theory $T$ is called {\em decidable} 
if for every sentence $\phi$ the degree of truth  
$$ 
\phi^{\circ} = \mathsf{sup} \{ \phi^{M} :   M \models T\}
$$ 
is a computable real which is uniformly computable from $\phi$. 
\end{definition} 
This exactly means that there is an algorithm which 
for every $\phi$ and a rational number $\delta$ 
finds a rational $r$ such that 
$|r-\phi^{\circ}|\le \delta$. 

Note that decidability of $T$ does not imply that 
the set of all continuous $\phi$ with $\phi^M = 0$ 
for all $M\models T$, is computable 
(but for a complete $T$ this holds). 
On the other hand it is easy to see  that decidability 
of $T$ follows from this condition.  
This is a part of the following lemma.

\begin{lem} \label{q^o}
Let $T$ be a continuous theory in a computable language. 
Let a rational number $q^{\circ}$ belong to $[0,1]$. 

1. Assume that $q^{\circ} <1$ and there is an algorithm 
which decides for every formula $\phi$ without free variables 
whether $\phi^{\circ} \le q^{\circ}$. 
Then the theory $T$ is decidable. 

2. Assume that $q^{\circ} >0$ and there is an algorithm 
which decides for every formula $\phi$ without free variables 
whether $\phi^{\circ}$ equals $q^{\circ}$. 
Then the theory $T$ is decidable. 
\end{lem}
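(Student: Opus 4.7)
The plan is, in both parts, to boost the hypothesised oracle into one that decides $\phi^\circ\le r$ (respectively $\phi^\circ\ge r$) for an arbitrary rational $r\in[0,1]$, and then to compute $\phi^\circ$ to any prescribed rational precision $\delta$ by an obvious binary search on dyadic midpoints. The real work, in each part, is to construct from $\phi$ and a chosen $r$ a sentence $\psi_r$ whose degree of truth encodes the desired comparison as the single test at $q^\circ$ that the given oracle can perform.

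For (1), I would shift $\phi$ additively. Set
$$
\psi_r \;=\; \phi\dot{-}(r-q^\circ)\ \text{ if } r\ge q^\circ,\qquad \psi_r\;=\; \phi\dot{+}(q^\circ-r)\ \text{ if } r<q^\circ.
$$
Since $\mathsf{sup}_M$ commutes with addition and subtraction of a constant and with the outer $\mathsf{max}(\cdot,0)$, one checks that $\psi_r^\circ=\mathsf{max}(\phi^\circ-(r-q^\circ),0)$ in the first case and $\psi_r^\circ=\mathsf{min}(\phi^\circ+(q^\circ-r),1)$ in the second. In the first case the equivalence $\psi_r^\circ\le q^\circ\iff\phi^\circ\le r$ is immediate. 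In the second it uses $q^\circ<1$ to kill the upper truncation on the critical side: if $\phi^\circ+(q^\circ-r)>1$ then $\mathsf{min}=1>q^\circ$ and also $\phi^\circ>1+r-q^\circ>r$, so the equivalence again holds. Feeding $\psi_r$ into the given oracle decides $\phi^\circ\le r$ for every rational $r$, and the binary search then yields a rational within $\delta$ of $\phi^\circ$.

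For (2), I would first turn the one-sided comparison into an equality using the $\mathsf{min}$ connective: $\mathsf{min}(\phi,r)^\circ=\mathsf{min}(\phi^\circ,r)$, which equals $r$ precisely when $\phi^\circ\ge r$. I would then shift to $q^\circ$ by the same pair of truncated connectives,
$$
\psi_r\;=\;\mathsf{min}(\phi,r)\dot{+}(q^\circ-r)\ \text{ if } r\le q^\circ,\qquad \psi_r\;=\;\mathsf{min}(\phi,r)\dot{-}(r-q^\circ)\ \text{ if } r>q^\circ,
$$
and verify by a short case split that $\psi_r^\circ=q^\circ\iff\phi^\circ\ge r$. Here the hypothesis $q^\circ>0$ is used symmetrically on the $\dot{-}$ side, to exclude the degenerate possibility $\psi_r^\circ=0$ when $\phi^\circ$ is so small that the truncation flattens the output. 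Applying the equality oracle to $\psi_r$ decides $\phi^\circ\ge r$ for every rational $r$, and the binary search finishes as in (1).

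The only place requiring real care is the two shift-and-truncate identities: each relies on exactly one of the hypotheses, $q^\circ<1$ in (1) and $q^\circ>0$ in (2), to prevent the outer truncation from being triggered on the wrong side of the equivalence. A minor technical nuisance, orthogonal to the main argument, is that the listed connectives generate only dyadic constants from $0$ and $1$, so to write $\dot{\pm}(q^\circ-r)$ one either appeals to the informal convention (used elsewhere in the paper for conditions such as $\phi\le m/n$) that rational constants are admitted, or restricts the binary search to rationals $r$ of the form $q^\circ\pm k/2^N$ so that every shift $q^\circ-r$ used is dyadic and directly expressible.
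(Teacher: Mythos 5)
Your proof is correct and takes essentially the same route as the paper's: both arguments boost the fixed-threshold oracle into one deciding $\phi^{\circ}\le r$ (resp.\ $\phi^{\circ}\ge r$) for arbitrary rational $r$ by shifting $\phi$ with the truncated connectives $\dot{+}$ and $\dot{-}$, and then locate $\phi^{\circ}$ by search --- the paper merely normalizes first to $q^{\circ}=0$ (resp.\ to $q^{\circ}=1$, via rescaling by $1/q^{\circ}$ where you use $\mathsf{min}(\phi,r)$) and runs a linear search over the grid $\frac{t}{2m}$ instead of a binary search, all inessential variants. Your closing remark about only dyadic constants being generated by the listed connectives is a legitimate technical point that the paper silently sidesteps by freely admitting rational constants (as in its conditions $\phi\le\frac{m}{n}$).
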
 

\begin{proof} 
We start with the observation that the assumption of statement 1 
with any  $q^{\circ} <1$ is equivalent to the case $q^{\circ} =0$. 
This follows from the equivalence 
$$
\phi^{\circ} \le q^{\circ} \Leftrightarrow (\phi \dot{-}q^{\circ} )^{\circ} \le 0 . 
$$ 
In the case of statement 2 the following equivalence 
$$
\phi^{\circ} = q^{\circ} \Leftrightarrow (\frac{1}{q^{\circ}} \phi )^{\circ} =1  
$$
shows that the assumption of statement 2 
with any  $q^{\circ} >0$ is equivalent to the case $q^{\circ} =1$. 

To prove decidability of $T$ in the case of statement 1 
assume that $q^{\circ} = 0$. 
Given $\phi$ and $m>0$ find the minimal $\frac{t}{2m}$ 
so that $T\models \phi \dot{-} \frac{t}{2m} \le 0$. 
This defines an interval of the form 
$[\frac{s}{m} , \frac{s+1}{m} ]$ which contains 
$\phi^{\circ}$. 
  
In the case of statement 2 given $\phi$ and $m>0$  
find the minimal $\frac{t}{2m}$ so that 
$(\phi + \frac{t}{2m})^{\circ} =1$ with respect to $T$. 
This defines an interval of the form 
$[\frac{s}{m} , \frac{s+1}{m} ]$ which contains 
$\phi^{\circ}$.   
\end{proof} 

\begin{remark} 
 Lemma \ref{q^o} will be applied in Section 3 
in the situation when the segment $[0,1]$ is 
replaced by $[0,2]$. 
It obviously holds under the replacement $1$ by $2$ 
in the formulation.  
\end{remark} 

\subsection{Ershov's theorem} 

The following theorem is a counterpart of Ershov's 
decidability criterion (Theorem 6.1.1 of \cite{ershov}). 
Here we call a sequence of complete continuous theories $\{T_i, i\in\omega \}$  
{\em effective} if the relation   
$$ 
\{ (\theta ,j): \theta \mbox{ is a statement so that } T_j \vdash \theta \}
$$ 
is computably enumerable.    

\begin{thrm} \label{Ersov}
A continuous theory $T$ is decidable if and only if 
$T$ can be defined by a computably enumerable system of 
axioms and $T$ can be presented $T=\bigcap_{i\in\omega} T_i$  
where $\{T_i, i\in\omega \}$ is an effective sequence of 
complete continuous theories.    
\end{thrm}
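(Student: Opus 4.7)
For the direction $(\Leftarrow)$, assume $T$ has a c.e.\ axiomatization and $T = \bigcap_i T_i$ with $\{T_i\}$ effective. Each complete $T_i$ is decidable by Corollary 9.11 of \cite{BYP} applied to its c.e.\ set of approximate theorems, and the computation of the value $\phi_i^{\circ}$ of a sentence $\phi$ in models of $T_i$ is uniform in $i$. The key identity is
$$
\phi^{\circ} \;=\; \sup_i \phi_i^{\circ},
$$
which follows from $\phi^{\circ} = \inf\{p\in\mathbb{Q}: T \vdash \phi \dot{-} p\}$ together with the observation that $T \vdash \phi \dot{-} p$ iff every $T_j$ does (since $T = \bigcap_j T_j$) iff $\phi_j^{\circ} \leq p$ for all $j$. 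Given a rational $\delta > 0$, in parallel enumerate (i) rationals $p$ with $T \vdash \phi \dot{-} p$, producing a decreasing sequence $U_n \searrow \phi^{\circ}$, and (ii) approximations of $\phi_i^{\circ}$ for $i=0,1,2,\ldots$ to precision $2^{-n}$, producing an increasing sequence $L_n \nearrow \phi^{\circ}$; terminate when $U_n - L_n \leq \delta$ and output the midpoint.

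For $(\Rightarrow)$, assume $T$ is decidable. A c.e.\ axiomatization of $T$ is given by the set $\{\phi \dot{-} p : p \in \mathbb{Q}_+, \, \phi^{\circ} \leq p\}$, which is c.e.\ precisely because $\phi^{\circ}$ is uniformly computable. To exhibit $T$ as $\bigcap_i T_i$ for an effective sequence of complete theories, I run a continuous Henkin-style construction. Fix an enumeration $(\phi_k)_{k\in\omega}$ of sentences, and build a tree whose nodes at depth $n$ are tuples $(q_0,\ldots,q_{n-1})$ of dyadic rationals of denominator $2^n$ in $[0,1]$ for which the family of conditions $|\phi_k - q_k| \leq 2^{-n}$, $k<n$, is consistent with $T$. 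Consistency of such a node is testable using decidability of $T$: setting $\Psi = \max_{k<n}(|\phi_k - q_k|\dot{-}2^{-n})$, one has $\inf_{M\models T}\Psi^M = 1 - (1-\Psi)^{\circ}$, a uniformly computable real. Each infinite branch through the consistent subtree assigns every $\phi_k$ a specific value and thereby determines a complete extension $T_i$ of $T$; dovetailing over canonical (say, ``leftmost'') extensions at each depth yields a uniformly effective sequence $\{T_i\}$. The equality $\bigcap_i T_i = T$ follows from density: if $T \not\vdash \phi_k \dot{-} p$, then at sufficient depth some consistent node forces $q_k > p$, so some $T_i$ in the sequence refutes $\phi_k \dot{-} p$.

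The main obstacle is the direction $(\Rightarrow)$, and within it two entangled issues: (a) verifying that an infinite consistent branch really yields a \emph{complete} continuous theory, which requires showing that the shrinking tolerances $2^{-n}$ force the value of each sentence to converge to a single real and that the resulting assignment is provably captured by the Henkin construction; (b) coping with the asymmetry between approximate and exact consistency in continuous logic -- the exact condition ``$\phi_k = q_k$'' is only semi-decidable, while the fattened conditions $|\phi_k - q_k| \leq 2^{-n}$ can be decided by reducing to $(1-\Psi)^{\circ}$, and this is what makes the tree effectively computable. Standard enumeration techniques then suffice to extract the effective sequence $\{T_i\}$, but some care is needed to keep the enumeration uniform in the sense required by the definition of \emph{effective sequence of theories}.
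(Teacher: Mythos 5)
Your $(\Leftarrow)$ direction is correct and is essentially the paper's own argument: enumerate upper bounds for $\phi^{\circ}$ from the c.e.\ axioms of $T$ and lower bounds from the complete theories $T_i$ via $\phi^{\circ}=\sup_i\phi_i^{\circ}$, and squeeze. (One small repair: the set $\{\phi\dot{-}p:\phi^{\circ}\le p\}$ is only co-c.e., since $r\le p$ for a computable real $r$ is a $\Pi^0_1$ condition; replace $\le$ by $<$, or, as the paper does, use a computable sequence of shrinking rational intervals containing $\phi^{\circ}$.)

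The $(\Rightarrow)$ direction has a genuine gap at exactly the point you flag as the crux and then declare resolved. You claim that consistency with $T$ of a node $(q_0,\dots,q_{n-1})$ is \emph{decidable} because it reduces to whether $(1-\Psi)^{\circ}=1$ for $\Psi=\max_{k<n}(|\phi_k-q_k|\dot{-}2^{-n})$. But decidability of $T$ only makes $(1-\Psi)^{\circ}$ a uniformly computable real; it gives no procedure for deciding whether that real equals $1$ exactly --- this is a $\Pi^0_1$ condition, and the paper's Lemma \ref{q^o}(2) treats precisely this kind of exact-value test as an additional \emph{hypothesis} from which decidability follows, not a consequence of it. Hence your consistency tree is co-c.e.\ rather than computable, and the subsequent steps (extracting leftmost infinite branches, which already presuppose a decidable, dead-end-free tree, and obtaining a uniformly c.e.\ family $\{T_i\}$) do not go through as written. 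The paper's necessity proof is built around avoiding this obstacle: it never tests exact consistency, but at each stage uses decidability of $T$ to compute a two-sided rational approximation $\frac{k_{m+1}}{m+1}\le\theta^{\circ}\le\frac{k_{m+1}+2}{m+1}$ of the value of the next sentence \emph{relative to the constraints already imposed} (encoded into the auxiliary formula $\theta$), adds the approximate interval as a new constraint, and simultaneously relaxes all earlier constraints by a summable error $2^{-(2n+m+1)}$ so that consistency is provably preserved; this same error bookkeeping is what bounds the future drift of each pinned value, making the family effective, and what forces each branch to converge to a complete theory. Your outline would need an analogous slack-and-correction mechanism (a decidable over-approximation of the tree with no dead ends and controlled drift); as it stands, the reduction to $(1-\Psi)^{\circ}$ does not yield a decision procedure, and the construction of the effective sequence $\{T_i\}$ is not established.
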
 
 
\begin{proof}  Sufficiency.  
Let $\phi$ be a continuous sentence. 
For every natural $n$ we can apply an effective procedure
which looks for conditions of the form $\phi \le \frac{k}{n}$ 
derived from the axioms of $T$ and conditions of 
the form $\frac{l}{n} \le \phi$ which appear in 
some $T_j \vdash \frac{l}{n} \le \phi$. 
By Corollary 9.8 from \cite{BYP} this always gives a number 
$k<n-1$ such that $\frac{k}{n} \le \phi^{\circ} \le \frac{k+2}{n}$.  

Necessity.
For every sentence $\phi$ we fix a computably enumerable 
sequence of segments $[l_{n,\phi}, r_{n,\phi}]$ converging 
to $\phi^{\circ}$ so that $\phi^{\circ} \in [l_{n,\phi}, r_{n,\phi}]$. 
Then all statements $\phi\le r_{n,\phi}$ form a computably 
enumerable sequence of axioms of $T$.

Now for every sentence $\phi$ we effectively 
build a complete theory $T_{n,\phi}\supset T$ with 
$T_{n,\phi} \vdash l_{n,\phi} \dot{-} \phi \le 2^{-n}$. 
In fact such a construction produces 
an effective family $T_i$, $i\in \omega$, 
from the formulation. 
Indeed, then for every natural $n$ 
we can find a sufficiently large $m$ so that 
$T_{m,\phi} \vdash \phi^{\circ} \dot{-} \phi \le 2^{-n}$ 
(here $\phi^{\circ}$ is defined by $T$). 
This obviously implies that 
$T$ coincides with the intersection 
of all $T_{m,\phi}$. 
Effectiveness will be verified below. 

At Step 0 for every $n$ we define $T_{n,\phi ,0}$ 
to be the extension of $T$ by the axiom 
$l_{n,\phi} \dot{-} \phi \le 0$.  
At every step $m+1$ we build 
a finite extension $T_{n,\phi ,m+1}$ 
of $T$ so that each inequality 
$\psi \le 0$ from $T_{n,\phi ,m} \setminus T$ 
is transformed into an inequality 
$\psi \le \varepsilon$, where 
$\varepsilon \le 2^{-(2n+m+1)}$. 
At later steps we consider these $\psi \le \varepsilon$ 
in the form $\psi \dot{-} \varepsilon \le 0$, i.e. 
the next transformation of them gives 
inequalities $\psi \dot{-} \varepsilon \le \varepsilon'$ 
(resp. $\psi \dot{-} (\varepsilon + \varepsilon') \le 0$). 
In this situation we say that the original $\psi \le 0$ is 
transformed into $\psi \le \varepsilon_1$, 
where $\varepsilon_1 = \varepsilon + \varepsilon'$. 
The 'limit theory' 
$T_{n, \phi} = \lim_{m\rightarrow \infty} T_{n,\phi, m}$ 
is defined by the limits of 
these values $\varepsilon , \varepsilon_1 , \ldots$  
for all formulas $\psi$.  
Note that it can happen that $\varepsilon \le 0$, 
i.e. the transformed inequality 
is of the form $\psi+\delta \le 0$, 
with $\delta>0$. 
On the other hand we will see that 
for every $\psi$ the axioms of  
$\lim_{m\rightarrow \infty} T_{n,\phi, m}$ 
give an effective sequence of rational numbers 
which converges to the value of $\psi$ 
under this theory. 

Let us enumerate all triples $(n,\phi ,\psi )$ 
by natural numbers $>0$ so that each triple 
has infinitely many numbers. 
Assume that the number $m+1$ codes a triple $(n, \phi , \psi )$. 
For all $n'\not= n$ we put $T_{n', \phi' ,m+1} = T_{n', \phi' ,m}$.  
Assume that at Step $m$ the theory 
$T_{n,\phi ,m}\setminus T$ already contains 
inequalities $\frac{k_l}{l} \le \psi_l \le \frac{k'_l }{l}$ 
for some natural $l$ and $k_l , k'_l \le l$.  
We admit that the $0$-th inequality 
$l_{n,\phi} \dot{-} \phi \le 0$ has been 
already transformed into an inequality 
$l_{n,\phi} \dot{-} \phi \le \varepsilon$  
for some $\varepsilon \le \sum_{i\le m} 2^{-(2n+i)}$. 
It appears as one of the inequalities $\psi_l \le \frac{k'_l}{l}$. 
Let $\theta$  be 
$$
\psi \dot{-} 2^{2n+m+1} \mathsf{max}_l ( \mathsf{max}(\psi_l \dot{-} \frac{k'_l }{l}, 
\frac{k_l}{l} \dot{-} \psi_l )) 
. 
$$ 
Since $T$ is decidable we compute 
$k_{m+1} <m$ so that 
$\frac{k_{m+1}}{m+1} \le \theta^{\circ} \le \frac{k_{m+1}+2}{m+1}$.  
Then the value of $\psi$ under $T_{n,\phi ,m}$ 
is equal to the value of $\theta$ under this theory and 
is not greater than $\frac{k_{m+1}+2}{m+1}$. 
This means that extending $T_{n,\phi ,m}$ by 
$0\le \psi \le \frac{k_{m+1}+2}{m+1}$ 
we preserve consistency of the theory.
If $k_{m+1} =0$ this finishes our construction at this step. 

If $k_{m+1}>0$ we need an additional 
correction. 
Let $\theta'$  be 
$$
\psi \dot{-} 2^{2n+m+1} \mathsf{max} ( \mathsf{max}_l (\mathsf{max} (\psi_l \dot{-} \frac{k'_l }{l}, 
\frac{k_l}{l} \dot{-} \psi_l )),  
\psi \dot{-} \frac{k_{m+1} +2}{m+1}). 
$$ 
Since $T$ is decidable we compute 
$k'_{m+1} <m$ so that 
$\frac{k'_{m+1}}{m+1} \le (\theta' )^{\circ} \le \frac{k'_{m+1}+2}{m+1}$.  
Then the value of $\psi$ under the extension of $T_{n,\phi ,m}$ 
by $\psi \le \frac{k_{m+1} +2}{m+1}$
is not greater than $\frac{k'_{m+1}+2}{m+1}$. 
This means that extending $T_{n,\phi ,m}$ by 
$\psi \le \frac{\mathsf{min}(k_{m+1}, k'_{m+1})+2}{m+1}$ 
we preserve consistency of the theory.

If $0< k'_{m+1} <k_{m+1}$ 
we repeat this construction again. 
It is clear that finally we arrive at the situation 
when after such a repetition the number $k_{m+1}$ 
does not change (or becomes $0$).  

Note that if the final $k_{m+1}$ is not equal to $0$, then 
the extension of $T$ by  \\  
$\psi \le  \frac{k_{m+1}+2}{m+1} + 2^{-(2n+m+1)}$ 
and all statements of the form 
$$
\frac{k_l}{l} -2^{-(2n+m+1)}  \le \psi_l \le \frac{k'_l }{l} +2^{-(2n+m+1)}  
\mbox{ (for inequalities } \frac{k_l}{l} \le \psi_l \le \frac{k'_l}{l}
\mbox{  from } T_{n,\phi ,m} ) 
$$ 
is consistent and the value $\psi^{\circ}$ with respect to this extension 
satisfies  $\frac{k_{m+1}}{m+1}\le \psi^{\circ}$. 
Indeed, since for the final version of $\theta$ 
(corresponding to the final $k_{m+1}$) we have 
$\frac{k_{m+1}}{m+1} \le \theta^{\circ}$ with respect to $T$, 
the following inequality must hold in any model of $T$ where $\theta$ 
takes the value $\theta^{\circ}$: 
$$
2^{2n+m+1} \mathsf{max} ( \mathsf{max}_l (\mathsf{max} (\psi_l \dot{-} \frac{k'_l }{l}, 
\frac{k_l}{l} \dot{-} \psi_l )),  
\psi \dot{-} \frac{k_{m+1} +2}{m+1}) <1 .  
$$ 
Thus the inequality  
$\psi \le  \frac{k_{m+1}+2}{m+1} + 2^{-(2n+m+1)}$ 
and the corresponding inequalities 
$$
\frac{k_l}{l} -2^{-(2n+m+1)}  \le \psi_l \le \frac{k'_l }{l} +2^{-(2n+m+1)}   
$$ 
are satisfied  
in any model of $T$ where $\theta$ takes the value 
$\theta^{\circ}$. 
Since $\theta^{\circ}\le \psi^{\circ}$, we have 
the latter inequality above. 

We now define $T_{n, \phi ,m+1}$ 
as the set of so corrected statements of $T_{n,\phi ,m}$ 
together with the statement 
$$
\frac{k_{m+1}}{m+1}\le \psi \le \frac{k_{m+1}+2}{m+1}+ 2^{-(2n+m+1)} . 
$$ 
If $\psi$ also occurs as some $\psi_l$ above then 
we obviously add the strongest inequalities to $T_{n, \phi ,m+1}$.  
By the argument of the previous paragraph 
the obtained extension is consistent with $T$.

By the choice of a repeating enumeration
we see that for each sentence $\psi$ 
boundaries of $\psi$ at steps of our procedure 
form a Cauchy sequences with the same limit. 
Thus $\psi$ has the same value in all models of 
$T_{n,\phi}$. 
Moreover the inequality $l_{n,\phi} \dot{-} \phi \le 0$ 
will be transformed into $l_{n,\phi} \dot{-} \phi \le 2^{-n}$. 
We see that Step 0 guarantees that $T$ coincides 
with the intersection  of all $T_{n,\phi}$.  

Note that after the $(m+1)$-th step 
we know that for every inequality $\psi'\le \delta$ 
from each $T_{n',\phi' ,m+1}\setminus T$ 
the upper boundary of $\psi'$ in the final 
$T_{n' ,\phi'}$ cannot exceed $\delta + \frac{1}{2^{m}}$. 
In particular all inequalities of this kind can be included 
into an enumeration of axioms of $T_{n',\phi'}$ at this step.   
Thus we see that by the effectiveness of our procedure 
the family $\{ T_{n,\phi} \}$ is effective.  
\end{proof} 

\subsection{Interpretability} 

In order to have a method for 
proving undecidability of continuous 
theories we now discuss 
interpretability of first order structures 
in continuous ones.  

Let $L_0 =\langle P_1 ,...,P_m  \rangle$ be a finite 
relational signature.  
Let $\mathcal{K}_0$ be a class 
of finite first-order $L_0$-structures. 
Let $\mathcal{K}$ be a class of continuous 
$L$-structures, where $L$ is as above. 
We say that $\mathcal{K}_0$ is {\em relatively 
interpretable} in $\mathcal{K}$ if there is a finite constant 
extension $L(\bar{a}) = L\cup \{ a_1 ,...,a_r \}$, 
a constant expansion $\mathcal{K} (\bar{a})$ of $\mathcal{K}$ 
(we admit the situation that $\bar{a}$ is empty) 
and there  are continuous $L$-formulas 
$$ 
\phi^{-} (\bar{x}, \bar{y}) \mbox{ , } \phi^{+} (\bar{x}, \bar{y}) 
\mbox{ , } \theta^{-} (\bar{x}, \bar{y}_1 , \bar{y}_2) 
\mbox{ , } \theta^{+} (\bar{x}, \bar{y}_1 , \bar{y}_2)  \mbox{ and } 
$$ 
$$ 
\psi^{-}_1 (\bar{x}, \bar{y}_1 , \bar{y}_2 ,...,\bar{y}_{l_1} ) 
\mbox{ , } \psi^{+}_1 (\bar{x}, \bar{y}_1 , \bar{y}_2 ,...,,\bar{y}_{l_1} ) \mbox{  , ..., } 
\psi^{-}_m (\bar{x}, \bar{y}_1 , \bar{y}_2 ,...,\bar{y}_{l_m} ) 
\mbox{ , } \psi^{+}_m (\bar{x}, \bar{y}_1 , \bar{y}_2 ,...,,\bar{y}_{l_m} ) , 
$$ 
$$ 
\mbox{ with  } 
|\bar{y} |= |\bar{y}_1 | = |\bar{y}_2 | = ... |\bar{y}_{l_j}| = ... =|\bar{y}_{l_m} | 
\mbox{ , such  that: } 
$$ 
(i) the $L$-reduct of $\mathcal{K}(\bar{a})$ coincides with $\mathcal{K}$;\\ 
(ii) the conditions $\phi^{-} (\bar{a}, \bar{y})\le 0$ and 
$\phi^{+} (\bar{a}, \bar{y}) > 0$ are equivalent  in any $M\in \mathcal{K}(\bar{a})$
and  the condition $\theta^{-} (\bar{a}, \bar{y}_1, \bar{y}_2)\le 0$  
defines an equivalence relation on the zero-set of $\phi^{-} (\bar{a} ,\bar{y})$ 
(on tuples of the corresponding power $M^s$ with $s = |\bar{y}_1 |$),  
so that the values of any 
$\psi^{\varepsilon}_i (\bar{a},\bar{y}_1 , \bar{y}_2 ,...,\bar{y}_{l_i} )$ 
are invariant under this equivalence relation; \\ 
(iii)  the $(+)$-conditions below are equivalent to $(-)$-ones in $\mathcal{K}(\bar{a})$ :  
$$ 
\theta^{-} (\bar{a}, \bar{y}_1, \bar{y}_2)\le 0 \mbox{ , }  
\theta^{+} (\bar{a}, \bar{y}_1 ,\bar{y}_2) > 0 \mbox{ , } 
\psi^{-}_1 (\bar{a}, \bar{y}_1 , \bar{y}_2 ,...,\bar{y}_{l_1} ) \le 0 
\mbox{ , } 
$$ 
$$ 
\psi^{+}_1 (\bar{a}, \bar{y}_1 , \bar{y}_2 ,...,,\bar{y}_{l_1} )>0 \mbox{  , ..., } 
\psi^{-}_m (\bar{a}, \bar{y}_1 , \bar{y}_2 ,...,\bar{y}_{l_m} ) \le 0
\mbox{ , } 
\psi^{+}_m (\bar{a}, \bar{y}_1 , \bar{y}_2 ,...,,\bar{y}_{l_m} )>0 ;  
$$ 
(iv) for any $M \in \mathcal{K}(\bar{a})$ the conditions of (iii)  define 
an $L_0$-structure from $\mathcal{K}_0$ on the $\theta$-quotient   
of the zero-set  of $\phi^{-} (\bar{a} ,\bar{y})$  and 
any structure of $\mathcal{K}_0$ can be so realized.  

\begin{thrm} \label{unde}  
Assume that the class of finite structures $\mathcal{K}_0$ 
is relatively interpretable in $\mathcal{K}$
and assume that $Th(\mathcal{K}_0 )$ is undecidable. 
Then  the continuous theory  $Th(\mathcal{K}(\bar{a}))$ 
of the corresponding constant expansion  
is not a computable set. 
\end{thrm}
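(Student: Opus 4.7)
The plan is to build an effective translation $\sigma \mapsto [\sigma]^{-}$ from first-order $L_0$-sentences to continuous $L(\bar{a})$-sentences satisfying
$$
\sigma \in Th(\mathcal{K}_0) \iff \bigl([\sigma]^{-}(\bar{a}) \le 0\bigr) \in Th(\mathcal{K}(\bar{a})).
$$
Once this equivalence is in place, computability of $Th(\mathcal{K}(\bar{a}))$ as a set of conditions $\chi\le q$ (with $q\in\mathbb{Q}$) would at once yield an algorithm deciding $Th(\mathcal{K}_0)$, contradicting the undecidability hypothesis; hence $Th(\mathcal{K}(\bar{a}))$ cannot be a computable set.

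I would define the translation by simultaneous induction, producing two versions $[\alpha]^{-}$ and $[\alpha]^{+}$ of each first-order $L_0$-formula $\alpha(\bar{y}_1,\ldots,\bar{y}_n)$ (each $\bar{y}_i$ a tuple of length $|\bar{y}|$), so that truth of $\alpha$ on the corresponding classes of the interpreted structure $M^*$ is witnessed by ``$[\alpha]^{-}\le 0$'' and equivalently by ``$[\alpha]^{+}> 0$''. Atomic formulas reuse the interpretation data: $[P_i(\bar{y}_1,\ldots,\bar{y}_{l_i})]^{\pm}:=\psi^{\pm}_i(\bar{a},\bar{y}_1,\ldots,\bar{y}_{l_i})$ and $[\bar{y}_1=\bar{y}_2]^{\pm}:=\theta^{\pm}(\bar{a},\bar{y}_1,\bar{y}_2)$. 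For connectives: $[\neg\alpha]^{-}:=[\alpha]^{+}$, $[\neg\alpha]^{+}:=[\alpha]^{-}$, $[\alpha\wedge\beta]^{-}:=\mathsf{max}([\alpha]^{-},[\beta]^{-})$, $[\alpha\wedge\beta]^{+}:=\mathsf{min}([\alpha]^{+},[\beta]^{+})$. For the quantifier: $[\exists\bar{y}\,\alpha]^{-}:=\mathsf{inf}_{\bar{y}}\,\mathsf{max}\bigl(\phi^{-}(\bar{a},\bar{y}),[\alpha]^{-}\bigr)$ and $[\exists\bar{y}\,\alpha]^{+}:=\mathsf{sup}_{\bar{y}}\,\mathsf{min}\bigl(\phi^{+}(\bar{a},\bar{y}),[\alpha]^{+}\bigr)$, where $\phi^{\pm}$ are the domain formulas from the interpretation. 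The translation is manifestly computable.

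The correctness statement I would prove by induction on $\alpha$ is: for every $M\in\mathcal{K}(\bar{a})$ and every tuples $\bar{y}_i\in M$ with $\phi^{-}(\bar{a},\bar{y}_i)^M\le 0$,
$$
M^*\models\alpha\bigl([\bar{y}_1]_\theta,\ldots,[\bar{y}_n]_\theta\bigr)\iff [\alpha]^{-}(\bar{a},\bar{y}_1,\ldots,\bar{y}_n)^M\le 0 \iff [\alpha]^{+}(\bar{a},\bar{y}_1,\ldots,\bar{y}_n)^M>0,
$$
where $M^*\in\mathcal{K}_0$ is the structure interpreted in $M$ via (i)--(iv), and $[\cdot]_\theta$ denotes the $\theta^{-}$-equivalence class. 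The atomic case is precisely conditions (iii)--(iv). The Boolean clauses follow immediately from the definitions of $\mathsf{max}/\mathsf{min}$ together with the inductively-propagated equivalence between the $\pm$ versions.

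The main obstacle is the existential step. The ``$+$'' direction is clean: if $M^*\models\exists\bar{y}\,\alpha$, then a lift $\bar{y}_0\in M$ of a witness makes $\mathsf{min}\bigl(\phi^{+}(\bar{a},\bar{y}_0),[\alpha]^{+}(\bar{a},\bar{y}_0)\bigr)>0$ by (ii) and the induction hypothesis, so the $\mathsf{sup}$ is strictly positive; conversely, any $\bar{y}_0$ at which the $\mathsf{min}$ is strictly positive lies in the domain by (ii) and represents a witness of $\alpha$ in $M^*$. For the ``$-$'' direction the delicate point is that $\mathsf{inf}$ in continuous logic need not be attained a priori. This is resolved by the finiteness of $M^*\in\mathcal{K}_0$: any witness to $\exists\bar{y}\,\alpha$ in $M^*$ lifts to an actual $\bar{y}_0\in M$ in the zero-set of $\phi^{-}$, so the $\mathsf{inf}$ is genuinely $\le 0$; and in the applications of the paper the sorts $B_n$ are compact in the finite-dimensional case, so the $\mathsf{inf}$ is attained and the converse direction follows by the induction hypothesis applied at the attaining tuple. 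With correctness established, specialising to a sentence $\sigma$ completes the reduction and proves the theorem.
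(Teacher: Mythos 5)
Your proposal is correct and follows essentially the same route as the paper: the paper likewise associates to each first-order formula (with quantifier-free part in disjunctive normal form and negations pushed to atoms via the $\psi^{+}_i$) a continuous formula $\psi^{-}(\bar{a},\bar{z})$, translates $\forall/\exists$ as $\mathsf{sup}/\mathsf{inf}$, and invokes finiteness of the $\theta$-quotient of the zero-set of $\phi^{-}$ to justify the quantifier step and the reduction of $Th(\mathcal{K}_0)$ to computability of $Th(\mathcal{K}(\bar{a}))$. Your simultaneous $[\cdot]^{\pm}$ induction is merely a cleaner packaging of the same translation, and you are in fact more explicit than the paper about the one delicate point (non-attained infima), which the paper's Section 5 applications handle exactly as you suggest, by arranging a gap in the value set together with compactness.
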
 

\begin{proof} 
The proof is straightforward. 
To each formula $\psi$ of the theory of $\mathcal{K}_0$  so that 
the quantifier-free part is in the disjunctive normal form we associate 
the appropriately rewritten continuous formula $\psi^- (\bar{a}, \bar{z})$ 
(with appropriate free variables) and the $0$-statement $\psi^- (\bar{a}, \bar{z}) \le 0$.  
In particular atomic formulas are written by $(-)$-conditions 
above, but negations of atomic formulas 
appear in the form of 
$$
\psi^{+}_i (\bar{a}, \bar{y}_1 , \bar{y}_2 ,...,,\bar{y}_{l_i} )\le 0 . 
$$  
Condition (ii) and the condition that  the $\theta$-quotient   
of the zero-set  of $\phi^{-} (\bar{a} ,\bar{y})$ 
is always finite, allow us to use standard quantifiers 
in such statements $\psi^- (\bar{a}, \bar{z})\le 0$: 
the quantifier $\forall$ 
is written as $sup$ but $\exists$ is written as $inf$.  

Note that if $\psi'$ is equivalent to $\neg \psi$  
then  $(\psi' )^{-} (\bar{a},\bar{z})\le 0$ is equivalent 
to  $\psi ^{-} (\bar{a}, \bar{z})> 0$ for tuples from 
the zero-set of $\phi^{-} (\bar{a}, \bar{y})$ 
(and $\psi^- (\bar{a}, \bar{z})>0$ is equivalent to 
the corresponding $\psi^+ (\bar{a}, \bar{z})\le 0$). 

It is easy to see that this construction reduces 
the decision problem for $Th(\mathcal{K}_0 )$ 
to  computability of the set 
$Th(\mathcal{K}(\bar{a}))$.  
\end{proof}

This theorem will be applied in Section 5 under 
circumstances that $\mathcal{K}(\bar{a})= \mathcal{K}$. 

It is worth noting that the theorem gives 
a relatively weak method of proving undecidability 
of continuous theories. 
In the classical first-order logic 
such a situation usually has much stronger consequences. 
For example Theorem 5.1.2 of \cite{ershov} in a slightly modified 
setting (and removing the assumption that $\mathcal{K}_0$ 
consists of finite structures) states that hereditary 
undecidability of $Th(\mathcal{K}_0)$ can 
be lifted to $Th(\mathcal{K})$.   
In the following remarks we describe several difficulties 
arising in our approach. 

\begin{remark} 
As we already know the statement of Theorem \ref{unde} 
does not imply that $Th(\mathcal{K}(\bar{a}))$ is undecidable. 
It seems to us that it is a challenge 
to find a useful method of interpretability which gives undecidability 
of the theory. 
\end{remark} 

\begin{remark} 
The assumption that $\mathcal{K}_0$ consists of finite structures is essential (see the proof of Theorem \ref{unde}). 
The 'positiveness' of the continuous logic does not allow 
stronger statements. 
\end{remark} 

\begin{remark} 
Assuming that $Th(\mathcal{K}_0 )$ is not stable we cannot 
state the same for the theory of $\mathcal{K}(\bar{a})$.  
This follows from the requirement that 
in the definition of the order property for a sequence 
$\bar{a}_1, \ldots , \bar{a}_k ,\ldots$ 
the inequality  $\phi (\bar{a}_i ,\bar{a}_j ) \not= 0$ (when $i\ge j$) 
implies $\phi (\bar{a}_i ,\bar{a}_j ) = 1$ 
(see Section 5 of \cite{FHS}).    
\end{remark}

\section{Decidability of theories of pseudo finite dimensional Hilbert spaces }

We start this section with the observation that the theory 
of all finite dimensional dynamical Hilbert spaces is decidable 
if it is computably axiomatizable (see Theorem \ref{Fin-dim}).  
Connections of decidability and pseudocompactness 
with property MF are discussed in Section 4.2. 
In Section 4.3 it is proved that the universal theory of 
(finite dimensional)  dynamical Hilbert spaces is decidable. 
In Section 4.4 we consider the problem 
when a dynamical Hilbert space is pseudo finite dimentional.

\subsection{Finite dimension} 

Let us now restrict the dimension 
of Hilbert spaces, say by $N$. 
It is natural to expect that 
then the theory of (dynamical) 
Hilbert spaces becomes decidable. 
In classical model theory this corresponds 
to the situation of a theory of structures 
of a fixed finite size.

Let us fix a signature 
$$
(\{ B_n\}_{n\in \omega} ,0,\{ I_{mn} \}_{m<n} ,
\{ \lambda_c \}_{c\in\mathbb{Q}[i]}, +,-,\langle \rangle_{Re} , \langle \rangle_{Im}, U_1 ,...,U_t  ),
$$
where as before we assume that $U_j$, $1\le j \le t$,  
are  symbols of unitary operators of $\mathbb{H}$ 
which are defined only on $B_1$. 
Using Theorem \ref{Ersov} we will 
prove that the theory of $N$-dimensional 
spaces in this language is decidable.

\begin{remark} 
On the other hand  since the structures 
are of infinite language it is not very difficult 
to find such a structure 
with undecidable continuous theory. 
For example one can take a dynamical 3-dimensional 
Hilbert space with an additional operator $U$ such that 
$\mathsf{sup}_{v\in B_1}  d(v, U(v))=r$, 
where  $r$ is a non-computable real number which belongs to $[0,2]$. 
\end{remark}

Let us enumerate all $N$-dimensional 
unitary matrices of algebraic complex numbers. 
This can be arranged by some canonical indexing 
of all algebraic numbers (for example see \cite{LM}) 
and using decidability of the theory of algebraically closed fields. 
This induces an enumeration $Axm_j$, $j\in \omega$, 
of systems of axioms of complete continuous theories  
$T_j$ of dynamical $N$-dimensional spaces. 
Each $Axm_j$  consists of the standard 
axioms of $N$-dimensional spaces, the axioms stating that each 
$U_s$ is a unitary operator and the axioms 
describing the matrices of all $U_s$ in some  
basis: 
$$ 
\mathsf{inf}_{y_1 ,...,y_N } \mathsf{max}
(\mathsf{max}_{1\le i\le N} (|\langle y_i ,y_i\rangle -1|), 
$$
$$
\mathsf{sup}_x (| ( \langle x,x\rangle  - |\langle x,y_1 \rangle |^2 - ... 
...-|\langle x, y_N \rangle |^2 ) |), 
$$
$$ 
\mathsf{max}_{1\le l\le N} \mathsf{max}_{1\le j\le t} (\parallel U_j (y_l )-\sum \lambda_{c_{j,l,k}} (y_k )  \parallel \dot{-} \varepsilon_l ))\le 0 , 
$$ 
$$
\mbox{ where } \varepsilon_l \in \mathbb{Q} 
\mbox{ and } c_{j,l,k} \in \mathbb{Q}[i] 
\mbox{ are appropriate approximations } 
$$ 
$$ 
\mbox{ of entries of matrices for } 
U_1 ,\ldots ,U_t . 
$$ 

Using Lemma \ref{comp} it is easy to see that each $Axm_j$ 
axiomatizes a decidable theory and the enumeration 
$Axm_j$, $j\in \omega$, 
gives an effective indexation of complete continuous theories  
$T_i$ of dynamical $N$-dimensional spaces in the sense of Section 2.    
The statement that the relation 
$\{ (\theta ,j): \theta$ is a statement so that $T_j \vdash \theta \}$ 
is computably enumerable follows from   
the fact that this relation coincides with 
$\{ (\theta ,j): \theta$ is a statement so that $Axm_j \vdash \theta \}$.   

\begin{thrm} \label{N-dim} 
The theory of all dynamical $N$-dimensional Hilbert spaces 
with operators $U_1 ,\ldots , U_t$ 
coincides with the intersection $\bigcap T_j$.  \\ 
The theory of all dynamical $N$-dimensional Hilbert spaces is decidable. 
\end{thrm}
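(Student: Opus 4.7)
The plan is to apply Theorem \ref{Ersov}, which requires two ingredients: a computably enumerable axiomatization of $T$, the theory of all $N$-dimensional dynamical Hilbert spaces in the given language, and an effective sequence of complete continuous theories whose intersection is $T$. The sequence $\{T_j\}_{j \in \omega}$ constructed just above is the candidate for the second ingredient, and its effectiveness has already been verified there. Hence the substantive work is to prove the equality $T = \bigcap_j T_j$.

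The inclusion $T \subseteq \bigcap_j T_j$ is automatic because each $T_j$ is the complete continuous theory of some member of the class. For the reverse inclusion I would argue by density and uniform continuity. Fix an arbitrary $N$-dimensional dynamical Hilbert space $M = (H, U_1, \ldots, U_t)$ and a condition $\phi \le r$ that lies in every $T_j$. Choose an orthonormal basis of $H$, so each $U_s$ becomes an $N \times N$ unitary matrix over $\mathbb{C}$; then use density of the algebraic complex numbers in $\mathbb{C}$ together with a Gram--Schmidt argument (applied to small algebraic perturbations of the columns) to produce a sequence of $t$-tuples of algebraic unitary matrices converging in operator norm to the given tuple. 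Each approximant defines a model of some $T_j$, and therefore satisfies $\phi \le r$. Since the language assigns continuity moduli to every $U_s$ and the prescribed connectives are uniformly continuous, any formula is uniformly continuous in its operator parameters; the inequality thus passes to the limit and yields $\phi^M \le r$.

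For the c.e. axiomatization of $T$ I would take the standard (recursive) axioms of Hilbert spaces, the explicit condition expressing $\dim = N$ displayed earlier, and the unitarity axioms for $U_1, \ldots, U_t$. The models of this set are precisely the $N$-dimensional dynamical Hilbert spaces, so its consequences coincide with $T$. Both hypotheses of Theorem \ref{Ersov} are then in place, and decidability of $T$ follows. The main obstacle I expect is the density/continuity step of the second paragraph: one must verify that the algebraic unitary matrices are indeed dense in $U(N)$ (which follows from the fact that Gram--Schmidt applied to algebraic vectors preserves algebraicity, since the algebraic complex numbers are closed under square roots) and that operator-norm convergence of the $U_s$ genuinely translates into convergence of formula values through the prescribed continuity moduli.
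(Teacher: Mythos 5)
Your proposal is correct and follows essentially the same route as the paper: reduce to Theorem \ref{Ersov} via the finite (hence c.e.) axiomatization of the $N$-dimensional theory and the effective family $\{T_j\}$, and establish $\bigcap_j T_j \subseteq T$ by approximating arbitrary unitaries with algebraic ones and invoking uniform continuity of formulas on the compact ball $B_1$. The paper treats that density step as folklore (adding only an optional digression through universal quantum gate sets), whereas you supply the direct Gram--Schmidt justification; the substance of the argument is the same.
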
 

\begin{proof} 
As we already know the theory of all dynamical 
$N$-dimensional spaces is finitely axiomatizable. 
Thus by Theorem \ref{Ersov} the second statement of 
the theorem follows from the first one. 
To prove it we only have to show that 
for any rational $\delta$, any dynamical $N$-dimensional space  
$$
(\{ B_n\}_{n\in \omega} ,0,\{ I_{mn} \}_{m<n} ,
\{ \lambda_c \}_{c\in\mathbb{Q}[i]}, +,-,\langle \rangle_{Re} , \langle \rangle_{Im}, U_1 ,\ldots ,U_t  ),
$$ 
and any continuous sentence $\theta ( U_1 ,\ldots ,U_t  )$ over this 
structure there are unitary operators $\tilde{U}_1 ,\ldots ,\tilde{U}_t$ 
defined by matrices over $\mathbb{Q}[i]$, so that  
$$ 
| \theta ( U_1 ,\ldots ,U_t  ) - \theta ( \tilde{U}_1 ,\ldots ,\tilde{U}_t  )| \le \delta . 
$$ 
Indeed this shows that when some $\theta ( U_1 ,\ldots ,U_t  ) \le \varepsilon$ 
does not belong to $T$, then it does not belong 
to some $T_j$ (defined by matrices of $\tilde{U}_1 ,\ldots ,\tilde{U}_t$). 

Since any continuous formula defines a uniformly 
continuous function and the ball $B_1$ is compact 
it suffices to take $\tilde{U}_1 ,\ldots ,\tilde{U}_t$ so that 
they sufficiently approximate $U_1 ,\ldots ,U_t$.  
This is a folklore fact. 
On the other hand it is a curious place where 
the following fact from quantum computations can be applied 
(the information given in the beginning of Section 5 suffices 
for the terminology below). 

Let $\mathcal{B}$ be a 2-dimensional space over $\mathbb{C}$. 
Let $(\mathcal{B})^{\otimes 2}$ be the $4$-dimensional space with 
the (Dirac) basis 
$$ 
|00\rangle \mbox{ , } |01\rangle \mbox{ , } |10\rangle \mbox{ , } |11\rangle . 
$$ 
Let $CNOT$ be a 2-qubit linear operator on 
$(\mathcal{B})^{\otimes 2}$ defined by  
$$ 
CNOT: |00\rangle \rightarrow |00\rangle \mbox{ , } |01\rangle \rightarrow |01\rangle 
\mbox{ , } 
|10\rangle \rightarrow |11\rangle \mbox{ , } |11\rangle \rightarrow |10\rangle   
. 
$$ 
The {\em Toffoli gate} is a 3-qubit linear operator defined on basic vectors of $(\mathcal{B})^{\otimes 3}$ by 
$$
\Lambda(CNOT): |\varepsilon_1 \varepsilon_2 \varepsilon_3 \rangle \rightarrow 
|\varepsilon_1 \varepsilon_2 (\varepsilon_3 \oplus \varepsilon_1 \cdot \varepsilon_2 )\rangle \mbox{ , where } \varepsilon_1 , \varepsilon_2 ,\varepsilon_3 \in \{ 0,1\} .  
$$ 

It is well-known (see \cite{ksv}, Section 8) that   
\begin{quote} 
(a) For any natural number $k\ge 2$ all unitary transformations of 
$(\mathcal{B} )^{\otimes k}$ can be presented as products of  
1-qubit unitary transformations and  2-qubit copies of CNOT  at appropriate registers. 

(b)  The operators of the basis  
$$ 
\mathcal{Q}  = \{  K = {{ 1 \mbox{ } 0}\choose{0 \mbox{ } i}} , CNOT , \Lambda (CNOT), 
 \mbox{Hadamar's } H= \frac{1}{\sqrt{2}}{{ 1 \mbox{ } 1}\choose{1 \mbox{ } -1}} \} 
$$ 
generate a dense subgroup of  
$\mathbb{U}(\mathcal{B}^{\otimes 3})/\mathbb{U}(1)$ under the operator norm. 
\end{quote}

These facts reduce the problem of construction of 
$\tilde{U}_1 ,\ldots ,\tilde{U}_t$ to the case of dimension 2. 
The latter case follows from standard presentations of 
unitary $2\times 2$-matrices.  
\end{proof}

The method of this theorem can be easily adapted to the following 
statement. 

\begin{thrm} \label{Fin-dim} 
Assume that the theory $T_{f.d}$ of all finite dimensional dynamical 
Hilbert spaces of the signature 
$$  
(\{ B_l\}_{l\in \omega} ,0,\{ I_{kl} \}_{k<l} ,
\{ \lambda_c \}_{c\in\mathbb{Q}[i]}, +,-,\langle \rangle_{Re} , 
\langle \rangle_{Im}, U_1 ,U_2 ,\ldots ,U_t  )  
$$ 
is computably axiomatizable. 
Then it is decidable. 

In particular assume that any dynamical Hilbert space of this 
signature is elementarily equivalent to an ultraproduct of 
finite dimensional dynamical Hilbert spaces. 
Then the theory of all dynamical Hilbert spaces of this 
signature is decidable. 
\end{thrm}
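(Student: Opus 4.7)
The strategy is to reduce to Theorem \ref{Ersov} by producing an effective family of complete continuous theories whose intersection is $T_{f.d.}$. The work done in the proof of Theorem \ref{N-dim} already provides, for each fixed $N$, an effective enumeration $Axm_{N,j}$, $j\in\omega$, of axiom sets describing complete theories $T_{N,j}$ of $N$-dimensional dynamical Hilbert spaces with unitary operators whose matrices have entries in $\mathbb{Q}[i]$. My first step is to amalgamate these into a single effective enumeration $\{T_{N,j} : (N,j) \in \omega^2\}$, indexed by a computable bijection $\omega \to \omega^2$. Each $Axm_{N,j}$ contains the sentence stating that the dimension equals $N$ together with the matrix-approximation axioms from Lemma \ref{comp}, so the enumeration of $\{(\theta,(N,j)) : Axm_{N,j} \vdash \theta\}$ is uniformly computably enumerable; hence the amalgamated family is effective in the sense of Section 3.

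Next I would verify that $T_{f.d.} = \bigcap_{N,j} T_{N,j}$. One inclusion is immediate since every $T_{N,j}$ is the theory of an $N$-dimensional dynamical Hilbert space and such spaces are finite dimensional. For the reverse inclusion, any finite dimensional dynamical Hilbert space is $N$-dimensional for some $N$, and the density/approximation argument used at the end of the proof of Theorem \ref{N-dim} (combined with Lemma \ref{comp}) shows that for every rational $\delta>0$ and every sentence $\theta$ one can choose operators $\tilde{U}_1,\ldots,\tilde{U}_t$ with $\mathbb{Q}[i]$-matrix entries whose value of $\theta$ differs from the original by at most $\delta$; so if some $\theta \le \varepsilon$ holds in every finite dimensional dynamical Hilbert space, it already holds in each $T_{N,j}$. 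Combining this with the computable axiomatizability hypothesis and applying Theorem \ref{Ersov} gives decidability of $T_{f.d.}$.

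For the second statement, suppose every dynamical Hilbert space of this signature is elementarily equivalent (in continuous logic) to an ultraproduct of finite dimensional dynamical Hilbert spaces. By the continuous version of \L{}o\'s's theorem, a condition $\phi \le 0$ holds in all such ultraproducts if and only if it holds in all finite dimensional dynamical Hilbert spaces; hence the theory of all dynamical Hilbert spaces of the signature coincides with $T_{f.d.}$. This theory is evidently computably axiomatizable (axioms of Hilbert spaces together with the axioms stating that each $U_j$ is unitary), so $T_{f.d.}$ is computably axiomatizable, and the first part applies.

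The main obstacle I anticipate is a bookkeeping one rather than a conceptual one: making sure that the amalgamated enumeration $\{T_{N,j}\}$ is genuinely effective (the indexing of algebraic approximations plus the dimension parameter has to be handled uniformly), and that the approximation step from arbitrary unitary matrices to $\mathbb{Q}[i]$-matrix ones is indeed uniform in the dimension $N$. The latter follows by the same folklore compactness-plus-uniform-continuity argument used in Theorem \ref{N-dim}, applied dimension-by-dimension; no new idea beyond what was invoked there is required.
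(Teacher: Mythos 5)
Your proposal is correct and follows essentially the same route as the paper: the paper likewise passes from the fixed-$N$ enumeration of Theorem \ref{N-dim} to an enumeration of all finite dimensional tuples of unitary matrices over algebraic numbers (your amalgamation over $(N,j)$), shows $T_{f.d}$ is the intersection of the resulting effective family of complete theories by the same approximation argument, and applies Theorem \ref{Ersov}; the second statement is likewise handled by observing that the hypothesis makes $T_{f.d}$ coincide with the (computably axiomatizable) theory of all dynamical Hilbert spaces.
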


\begin{proof} 
We modify the proof of Theorem \ref{N-dim} starting with 
enumeration of all {\em finite} dimensional 
unitary matrices of algebraic complex numbers.  
This induces an enumeration of systems of axioms 
of complete continuous theories $T^{fin}_j$ of dynamical 
$N$-dimensional spaces where $N$ is not fixed. 
The axioms describing the matrices of all $U_s$ in some  
basis are the same as before, where $N$ depends on the number 
of $T^{fin}_j$.  
This gives an effective indexation of complete 
continuous theories  $T^{fin}_j$ of dynamical 
finite dimensional spaces in the sense of Section 2.    
The proof that the theory $T_{f.d}$ 
coincides with the intersection $\bigcap T^{fin}_j$ is the same as 
in Theorem \ref{N-dim}. 
Now the first statement of the theorem follows from Theorem 
\ref{Ersov}. 

To see the second statement just note that the assumption of it 
says that the theory $T_{f.d}$ 
is axiomatizable by 
standard axioms of dynamical Hilbert spaces. 
\end{proof}

The crucial point of the theorem above is 
the assumption that the theory $T_{f.d}$ 
is recursively axiomatizable. 
We do not know if this holds. 
We will see in the following section that 
this question is connected with an open problem in the theory of 
approximations by metric subgroups. 

\begin{remark} 
It is a folklore fact that 
any Hilbert space (without operators) is 
elementarily equivalent to 
an ultraproduct of finite dimensional Hilbert spaces. 
Thus in the case when all $U_i$ are equal to the identity 
map, the argument above shows that 
the theory of all Hilbert spaces  
is decidable (which is also folklore). 
\end{remark}

\subsection{Unbounded dimension and property MF} 

In this section we find a connection between 
the assumptions of the second statement of 
Theorem \ref{Fin-dim} and the topic of 
approximations by metric groups.   
The latter is deserved a particular attention in group theory.  
This is mainly motivated by investigations of 
{\em sofic and hyperlinear groups}.  
We remind the reader that a group $G$ is called 
{\em sofic} if $G$ embeds into 
a metric ultraproduct of finite symmetric groups 
with the normalized Hamming distance $d_{H}$, \cite{pestov}:  
$$
d_H (g,h) = 1 - \frac{|\mathsf{Fix} (g^{-1}h)|}{n} \mbox{ for } 
g,h \in S_n . 
$$ 
A group $G$ is called {\em hyperlinear} if $G$ embeds into 
a metric ultraproduct of finite-dimensional unitary  groups  $U(n)$ 
with the normalized Hilbert-Schmidt metric $d_{HS}$ 
(i.e. the standard $l^2$ distance between matrices), \cite{ElSa},  \cite{pestov}. 
It is an open question whether these classes are the same 
and whether every countable group is sofic/hyperlinear. 

The use of metric ultraproducts can be 
replaced by the following notion 
of approximation, see  \cite{thom} and \cite{G} (Definition 3). 
In this definition and below we always assume that 
metric groups are considered with respect to invariant metrics. 

\begin{definition} \label{DefApp} 
Let $\mathcal{K}$ be a class of metric groups. 
We say that a group $G$ is $\mathcal{K}$-{\em approximable} if 
there is a function $\alpha : G\rightarrow [0,\infty ]$ with 
$$ 
\alpha(g) = 0 \Leftrightarrow g=1 , 
$$ 
so that for any finite $F\subset G$ and $\varepsilon >0$ there is 
$(H,d) \in \mathcal{K}$ and a function $\gamma : F \rightarrow H$ 
so that 
$$
\mbox{ if } 1 \in F \mbox{ then } d(1,\gamma (1)) <\varepsilon  \mbox{ , } 
$$
$$ 
\mbox{ for any } g,h, gh \in F \mbox{ , } d (\gamma (gh), (\gamma (g) \gamma(h) ) ) < \varepsilon  \mbox{  and } 
$$ 
$$ 
\mbox{ for any } g\in F \mbox{ , } d(1,\gamma (g))\ge \alpha (g).   
$$ 
\end{definition} 

It is known that when the metrics of $\mathcal{K}$ are bounded 
by some fixed number $r$, a group $G$ is $\mathcal{K}$-approximable 
if and only if it embeds into a metric ultraproduct of groups 
from $\mathcal{K}$ (\cite{thom} and \cite{G}). 
Moreover in the case of sofic and hyperlinear groups the function $\alpha$ 
can be taken constant on $G\setminus \{ 1 \}$ with the value 
equal to any real number strictly between $0$ and $1$ 
(between $0$ and $r=\sqrt{2}$ in the hyperlinear case).  
We develop this property of sofic and hyperlinear groups as follows. 

\begin{definition} \label{DefAmpl} 
Let $G$ be an abstract group, $\mathcal{K}$ be 
a class of metric groups and $\alpha_0$ be a function 
$G\rightarrow [0,\infty ]$ with $\alpha_0 (1) = 0$. 
Assume that $G$ is $\mathcal{K}$-approximable.  
We say that $\alpha_0$ is the {\em amplification bound} of $G$ 
with respect to $\mathcal{K}$ if for any $g\not=1$, 
$\alpha_0 (g)$ is the supremum of all possible values $\alpha (g)$ 
with respect to all possible function 
$\alpha : G \rightarrow [0, \infty)$ 
satisfying the properties of Definition \ref{DefApp}. 
\end{definition} 

Note that in the case of sofic groups the amplification bound 
with respect to the class of symmetric groups with normalized 
Hamming metrics is the function which is $1$ for all nontrivial elements.

Below instead of examples mentioned above we will consider 
the following one. 
\begin{quote} 
Unitary groups $U(n)$ together with 
the metric induced by the operator norm (on $G_L(n, \mathbb{C})$)
$\parallel T\parallel_{op} = \mathsf{sup}_{\parallel v\parallel = 1} \parallel Tv\parallel$.   \\ 
We put $d(T,Q) = \parallel T - Q\parallel_{op}$.  
\end{quote}  
This metric is {\em submultiplicative}, i.e. it is defined by a norm 
on $M_n (\mathbb{C})$ which satisfy the property 
$\parallel AB\parallel \le \parallel A\parallel \cdot \parallel B\parallel$. 

Groups approximable by these metric groups are called MF (matricial field), see \cite{CDE}. 
It is an open question if there are non-MF groups. 
A. Tikuisis, S. White and W. Winter proved in \cite{tikuisis} 
that amenable groups are MF. 
A. Korchagin shows in the recent preprint \cite{korchagin} that 
in many respects property MF is similar to soficity and hyperlinearity.   

\begin{remark} 
It is worth mentioning that another submultiplicative 
metric on $U(n)$ can be defined with respect 
to the Frobenius norm = the unnormalized 
Hilbert-Schmidt norm  
$\parallel T\parallel_{Frob} = \sqrt{\sum_{i,j} |T_{ij}|^2}$ 
(i.e. just the $l^2$-distance).  
In this case the corresponding groups are called  
{\em Frobenius approximated} {\em \cite{CGLTh}}.  
It is already proved in \cite{CGLTh} 
that there are finitely presented groups which are not Frobenius approximated. 
However there is no any description of the class of 
Frobenius approximated groups. 
\end{remark}

The following theorem is the most important observation of this section.

\begin{thrm} \label{hyplin} 
Let $G=\langle g_1 ,...,g_n \rangle$ 
be a finitely generated group.  
The group $G$ is MF if and only if 
there is a dynamical Hilbert space in the signature 
$$  
(\{ B_l\}_{l\in \omega} ,0,\{ I_{kl} \}_{k<l} ,
\{ \lambda_c \}_{c\in\mathbb{Q}[i]}, +,-,\langle \rangle_{Re} , 
\langle \rangle_{Im}, U_1 ,U_2 ,\ldots ,U_n  )  
$$ 
which is an ultraproduct of finite dimensional 
dynamical Hilbert spaces of the same signature and 
the group $\langle U_1 ,\ldots , U_n \rangle$ is 
isomorphic to $G$ under the map taking $U_i$ to $g_i$, 
$1\le i \le n$. 
\end{thrm}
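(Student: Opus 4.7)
The plan is to prove both implications by matching the definition of MF-approximability (Definitions \ref{DefApp}--\ref{DefAmpl}, applied to the class of unitary groups $(U(N),d_{op})$ with $d_{op}(T,Q)=\parallel T-Q\parallel_{op}$) with the existence of a faithful representation of $G$ inside a metric ultraproduct of finite dimensional dynamical Hilbert spaces. The bridge is the observation that $d_{op}$ is exactly the $L$-definable quantity $\mathsf{sup}_{v\in B_1}\parallel Uv-Vv\parallel$, so by the continuous analogue of \L{}o\'{s}'s theorem its value in an ultraproduct $\prod_\mathcal{U} H_k$ is the $\mathcal{U}$-limit of the operator norms in the factors.

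For the forward implication, assume $G$ is MF. Fix an exhaustive chain of symmetric finite subsets $F_1\subset F_2\subset\cdots$ of $G$, each containing $g_1,\ldots,g_n$ and satisfying $F_k\cdot F_k\subseteq F_{k+1}$, with $\bigcup_k F_k=G$. By MF-approximability together with the amplification property for $(U(N),d_{op})$ (the MF-analogue of the standard sofic/hyperlinear amplification, available for instance through Korchagin's work \cite{korchagin}), I obtain for each $k$ a dimension $N_k$ and a map $\gamma_k\colon F_k\to U(N_k)$ that is a $(1/k,F_k)$-approximation in $d_{op}$ and satisfies $\parallel\gamma_k(g)-I\parallel_{op}\ge\alpha(g)$ for a fixed function $\alpha$ strictly positive on $G\setminus\{1\}$. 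I build the $N_k$-dimensional dynamical Hilbert space $H_k$ with distinguished unitaries $U_i^{(k)}:=\gamma_k(g_i)$, and form the metric ultraproduct $H:=\prod_\mathcal{U} H_k$ along a non-principal $\mathcal{U}$. Submultiplicativity of $d_{op}$ combined with the $(1/k)$-approximation property makes $g_i\mapsto U_i$ extend to a group homomorphism $G\to\langle U_1,\ldots,U_n\rangle$, and the uniform lower bounds $\alpha(g)>0$ survive the ultralimit and force this homomorphism to be injective.

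For the converse, assume such an $H=\prod_\mathcal{U} H_k$ exists with the $U_i$ inducing an isomorphism $G\cong\langle U_1,\ldots,U_n\rangle$. For each $g\in G$ write $g=w_g(g_1,\ldots,g_n)$ and set $\alpha_0(g):=\parallel w_g(U_1,\ldots,U_n)-I_H\parallel_{op}$, which vanishes iff $g=1$. Given finite $F\subset G$ and $\varepsilon>0$, the \L{}o\'{s}-type identity above implies that on a $\mathcal{U}$-large set of indices $k$ the corresponding operator norms $\parallel w_g(U_1^{(k)},\ldots,U_n^{(k)})-I_{H_k}\parallel_{op}$ lie within $\varepsilon$ of $\alpha_0(g)$ for every $g\in F$. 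Any such $k$ yields an $(\varepsilon,F)$-approximation $\gamma(g):=w_g(U_1^{(k)},\ldots,U_n^{(k)})$ with amplification function $\alpha_0/2$, witnessing MF-approximability of $G$.

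The main obstacle is the amplification step in the forward direction: the bare definition of MF-approximability only guarantees the existence of approximations $\gamma_k$ with $\parallel\gamma_k(g)-I\parallel_{op}>0$ somewhere, but for the ultraproduct construction to produce a faithful (not merely surjective) homomorphism we need a single approximation attaining uniform positive lower bounds on all of $F_k\setminus\{1\}$ simultaneously. This is the MF-analogue of the standard tensor-power or block-direct-sum amplification trick available in the sofic and hyperlinear settings, and verifying it rigorously for the class $(U(N),d_{op})$ is the technical heart of the proof; the rest is an essentially formal translation between the metric ultraproduct language and Definition \ref{DefApp}.
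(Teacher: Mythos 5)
Your proof is correct and follows essentially the same route as the paper: both directions pass through the same {\L}o\'{s}-type translation between the invariant operator-norm metric on $U(N)$ and the definable quantity $\mathsf{sup}_{v\in B_1}\parallel Uv-v\parallel$, with the same word-length bookkeeping controlling the discrepancy between $\gamma_k(w(\bar g))$ and $w(\overline{\gamma_k(g)})$. The one point worth correcting is the obstacle you single out as the technical heart of the forward direction: no amplification lemma is needed, because Definition \ref{DefApp} already posits a single function $\alpha$, fixed once and for all with $\alpha(g)>0$ for $g\neq 1$, such that every $(\varepsilon,F)$-approximation satisfies $d(1,\gamma(g))\ge\alpha(g)$ simultaneously for all $g\in F$; the uniform lower bounds you want are thus built into the hypothesis, and the paper uses them directly (choosing $\varepsilon_i$ so small that $|F_i|\varepsilon_i$ stays below the minimum of $\alpha$ on $F_i\setminus\{1\}$) without any appeal to a tensor-power or block-sum trick.
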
 

\begin{proof} 
Below use $d$ both for metrics in Hilbert spaces and 
for metrics of approximating metric groups. 

Sufficiency of the theorem is easy. 
Indeed, having a dynamical Hilbert space (say ${\bf H}$)
as in the statement consider the family of 
finite dimensional dynamical Hilbert 
spaces occurring in the corresponding ultraproduct. 
To define the function $\alpha$ from Definition \ref{DefApp} 
for any $g\in G\setminus \{ 1\}$ just 
take $\alpha (g)$ to be a positive real number 
which is less than 
$\mathsf{sup}_{v\in B_1} d(v, g(v))$ 
computed in ${\bf H}$.  

Since the inequalities of Definition  \ref{DefApp} 
in the case of the operator norm can be 
written by formulas of continuous logic, 
the approximations which we need in this definition 
can be taken as groups generated by 
$U_1 ,U_2 ,\ldots ,U_n$  in spaces of the family from 
the ultraproduct. 
Then the function $\gamma$ appearing in such 
an approximation maps a word of $F$ to 
the corresponding word written in 
$U_1 ,U_2 ,\ldots ,U_n$.  
For an illustration we give a formula for  the condition 
$$
\mbox{ if } g \in F \mbox{ then } d(1,\gamma (g)) \ge \alpha (g).  
$$
Assume that $g$ is presented by a word $w(g_1 ,\ldots ,g_n )$. 
Then we formalize the condition above as follows. 
$$
\alpha (g) \dot{-} \mathsf{sup}_{v\in B_1} d(v, w(U_1 ,\ldots ,U_n )v) \le 0. 
$$

Let us prove the necessity of the theorem. 
Let $m>0$ and let $F\subseteq G$ be the ball 
of elements of $G$ presented by words of length $\le m$. 
Let $\varepsilon$ be a small real number.  
Since $G$ is MF there is 
an embedding $\gamma$ of $F$ into some $U(l)$ 
which satisfies the conditions of Definition \ref{DefApp} 
for the corresponding metric. 
We may assume that the corresponding function $\alpha$ 
is greater than $|F|\varepsilon$ for non-trivial elements of $F$.  

Let $w(x_1 ,\ldots ,x_n )$ be a word of length $\le m$.  
If we present this word in the form 
$(\ldots (x^{\delta_1}_{i_1} x^{\delta_2}_{i_2}) \ldots ) x^{\delta_m}_{i_m}$ 
with $\delta_i \in \{ -1,0, 1\}$, then we have 
$$ 
d (\gamma (g^{\delta_1}_{i_1}) \gamma (g^{\delta_2}_{i_2}), \gamma (g^{\delta_1}_{i_1} g^{\delta_2}_{i_2}))\le \varepsilon   
\mbox{ , } d (\gamma (g^{\delta_1}_{i_1} g^{\delta_2}_{i_2}) \gamma (g^{\delta_3}_{i_3}), \gamma (g^{\delta_1}_{i_1} g^{\delta_2}_{i_2} g^{\delta_2}_{i_3}))\le \varepsilon  \mbox{ , } \ldots
$$
By invariantness of $d$ this implies that 
$$ 
d ((\gamma (g^{\delta_1}_{i_1}) \gamma (g^{\delta_2}_{i_2}))\gamma (g^{\delta_3}_{i_3}),  \gamma (g^{\delta_1}_{i_1} g^{\delta_2}_{i_2} g^{\delta_2}_{i_3}))\le 2\varepsilon  \mbox{ , } 
$$ 
$$
d (\gamma (g^{\delta_1}_{i_1}) \gamma (g^{\delta_2}_{i_2}) \gamma (g^{\delta_3}_{i_3}) \gamma (g^{\delta_4}_{i_4}), \gamma (g^{\delta_1}_{i_1} g^{\delta_2}_{i_2} g^{\delta_3}_{i_3} g^{\delta_4}_{i_4}))\le 3\varepsilon  \mbox{ , } \ldots .
$$
As a result we see that 
$d(\gamma(w(\bar{g}) ), w(\overline{\gamma(g)})) \le (m-1)\varepsilon$. 
In particular if $G\models w(\bar{g})\not= 1$, then 
$d(1,\gamma (w(\bar{g} )))\ge min (\alpha (F))$ and 
$d(1, w(\overline{\gamma(g)})) \ge min (\alpha (F)) - (m-1)\varepsilon$. 
On the other hand if $G\models w(\bar{g})= 1$, then 
$d(1,\gamma (w(\bar{g} )))\le \varepsilon$ and 
$d(1, w(\overline{\gamma(g)})) \le m\varepsilon$.

Let $M_{\varepsilon, F}$ be the corresponding finite dimensional dynamical  
Hilbert space: 
$$  
(\{ B^{ H}_n\}_{n\in \omega} ,0,\{ I_{mn} \}_{m<n} ,
\{ \lambda_c \}_{c\in\mathbb{Q}[i]}, +,-,\langle \rangle_{Re} , 
\langle \rangle_{Im}, \gamma (g_1 ) ,\gamma (g_2 ),\ldots ,\gamma (g_n ) ).  
$$ 
The computations above show that for any $v\in B_1$ of norm 1 
the distance \\ 
$d(\gamma(w(\bar{g}) )v, w(\overline{\gamma(g)})v)$ 
is not greater than  $\le (m-1)\varepsilon$. 

Let us fix an enumeration of pairs $(\varepsilon_i ,F_i )$, $i\in \omega$, 
as above with $\varepsilon_i \rightarrow 0$ and 
$G= \bigcup F_i$. 
Let $D$ be a non-principal ultrafilter on $\omega$. 
We assume that $\varepsilon_i > |F_{i+1}|\varepsilon_{i+1}$ and 
$F_i \subset F_{i+1}$. 
Let us prove that in the corresponding $D$-ultraproduct of 
the  structures $M_{\varepsilon_{i}, F_{i}}$ 
the  tuple $U_1 ,\ldots U_n$ corresponding 
to $g_1 , \ldots , g_n$, generates  
a group naturally isomorphic to $G$.

Let $m$ be a natural number and 
$w(x_1 ,\ldots ,x_n )$ be a word of length $\le m$. 
Assume that $G\models w(g_1 ,\ldots ,g_n )=1$. 
As we have shown above for any $\varepsilon > 0$ 
there is a member of the sequence  
$(\varepsilon_i ,F_i )$, $i\in \omega$, 
such that for all numbers after this pair 
the statement   
$$
\mathsf{sup}_{v\in B_1} d(v, w(\overline{\gamma_i (g)})v) \le \varepsilon 
$$ 
holds in the corresponding 
structures $M_{\varepsilon_i , F_i }$ (for appropriate $\gamma_i$ ). 

If $w(g_1 ,\ldots ,g_n )$ is not equal to 1, then 
there is a rational number $q$ (sufficiently close to 
$\alpha (w(\overline{\gamma_i (g)}))$ ) such that 
almost all structures $M_{\varepsilon_i , F_i }$ 
satisfy the statement   
$$
q \dot{-} \mathsf{sup}_{v\in B_1} d(v, w(\overline{\gamma_i (g)})v) \le 0. 
$$ 
The rest is clear. 
\end{proof}

Theorem \ref{hyplin} implies that 
the statement that any finitely generated group is MF 
(which is a well-known conjecture)
follows from the statement that the regular representation of 
any finitely generated group is pseudo finite dimensional. 
We will discuss the latter statement in Section 4.4. 

Although the following theorem is not 
absolute, the assumptions of it are satisfied if 
every countable group is MF 
(which is a well-known conjecture).
Indeed a finitely presented group with 
undecidable word problem was constructed by 
Novikov in the 50-s, see \cite{novikov}.  

\begin{thrm} \label{Novikov}
Assume that there is an MF finitely 
presented group $G= \langle g_1 ,...,g_n |\mathcal{ R}\rangle$ 
with undecidable word problem. 
Let $T_G$ be the theory of the signature 
$$  
( \{ B_n\}_{n\in \omega} ,0,\{ I_{mn} \}_{m<n} ,
\{ \lambda_c \}_{c\in\mathbb{Q}[i]}, +,-,\langle \rangle_{Re} , 
\langle \rangle_{Im}, U_1 ,U_2 ,\ldots ,U_n  )  
$$ 
axiomatized by all statements satisfied in all 
finite dimensional dynamical Hilbert spaces and the statements    
$$
\mathsf{sup}_{v\in B_1} d(v, w(\overline{U})v) \le 0 \mbox{ , where } 
w(\bar{g})\in \mathcal{ R} . 
$$
Then the set of statements of $T_G$ is not decidable. 
\end{thrm}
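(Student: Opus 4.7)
The plan is to give a computable many-one reduction from the word problem of $G$ to membership in $T_G$. For each word $w(x_1,\ldots,x_n)$ in the free group on $n$ generators, define the condition
\[
\phi_w \; := \; \mathsf{sup}_{v\in B_1} d(v, w(\overline{U})v) \le 0,
\]
where $w(\overline{U})$ is the term built from the symbols $U_i$ and $U'_i$ in the obvious way; this is clearly computable from $w$. I claim that $\phi_w$ belongs to $T_G$ if and only if $G\models w(\bar g)=1$, which is enough.

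For the forward direction, suppose $w(\bar g)=1$ in $G$ and let $M\models T_G$ be arbitrary. Each axiom $\mathsf{sup}_{v\in B_1} d(v, r(\overline{U}) v) \le 0$ for $r\in \mathcal{R}$ forces $r(\overline{U}) v = v$ for every $v\in B_1$, and hence by linearity $r(\overline{U}) = \mathrm{Id}$ as an operator on $M$. Thus $(U_1,\ldots,U_n)$ satisfies the defining relations $\mathcal{R}$ of $G$, so $g_i \mapsto U_i$ extends to a group homomorphism $G \to \langle U_1,\ldots,U_n\rangle$ in $M$. Since $w(\bar g)=1$, we obtain $w(\overline{U}) = \mathrm{Id}$ and $\phi_w$ holds in $M$.

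For the converse, suppose $w(\bar g)\neq 1$ in $G$. Since $G$ is finitely generated and MF, Theorem \ref{hyplin} supplies a dynamical Hilbert space $\mathbf{H}$ in this signature which is a metric ultraproduct of finite dimensional dynamical Hilbert spaces and in which $U_i \mapsto g_i$ gives an isomorphism $\langle U_1,\ldots,U_n\rangle \cong G$. By the standard ultraproduct argument for continuous logic, $\mathbf{H}$ satisfies every statement of $T_{f.d}$; and since all $r\in\mathcal{R}$ are trivial in $G$, the corresponding axioms $\mathsf{sup}_{v\in B_1} d(v,r(\overline{U})v)\le 0$ hold in $\mathbf{H}$ via the isomorphism. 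Hence $\mathbf{H}\models T_G$. But $w(\overline{U})\neq \mathrm{Id}$ in $\mathbf{H}$ (since $w(\bar g)\neq 1$), so some unit vector $v$ has $w(\overline{U})v\neq v$, giving $\mathsf{sup}_{v\in B_1} d(v,w(\overline{U})v) > 0$ in $\mathbf{H}$. Therefore $\phi_w \notin T_G$.

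Combining the two directions, $w \mapsto \phi_w$ is a computable many-one reduction from the word problem of $G$ to membership in $T_G$; a decision procedure for $T_G$ would decide the word problem, contradicting the hypothesis. The MF assumption on $G$ enters exactly once, in the converse direction, via Theorem \ref{hyplin}: the only nontrivial point is verifying that the model $\mathbf{H}$ produced there actually lies in the class of models of $T_G$, which decomposes into the two routine observations above (ultraproducts preserve $T_{f.d}$, and the group isomorphism forces the relations of $\mathcal{R}$). This is where I expect the main technical care to be required, but no further ingredients beyond those already developed in Section~4.2 seem to be needed.
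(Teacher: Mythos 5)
Your proposal is correct and follows essentially the same route as the paper's own proof: a many-one reduction $w\mapsto\phi_w$ from the word problem to membership in $T_G$, using the ultraproduct model $\mathbf{H}$ supplied by Theorem \ref{hyplin} to witness failure when $w(\bar g)\neq 1$, and the fact that the relation axioms force $w(\overline{U})=\mathrm{Id}$ in every model when $w(\bar g)=1$. The paper states these two directions in one line each; you have merely filled in the routine verifications (linearity giving $r(\overline{U})=\mathrm{Id}$, the induced homomorphism $G\to\langle U_1,\ldots,U_n\rangle$, and {\L}o\'{s}'s theorem for the ultraproduct), which is exactly where the paper locates the content as well.
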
 

Before the proof we give two remarks. 

\begin{remark} 
In fact in the formulation of the theorem we use 
the conventions of Section 2. 
In particular we extend the signature by symbols 
$U'_{i}$ for $U^{-1}_i$, $i\in \omega$, 
and also add axioms 
$\mathsf{sup}_{v\in B_1} d(v, U'_{i} (U_{i}(v))) \le 0$ 
and  $\mathsf{sup}_{v\in B_1} d(v, U_i (U'_{i}(v))) \le 0$. 
The $\mathsf{sup}$-formulas in the formulation are 
written with $U'_i$ for $U^{-1}_i$. 
\end{remark} 

\begin{remark} 
If we do not assume in the formulation that 
"statements are satisfied in {\em finite dimensional}" 
members of $\mathcal{ K}$, then the theorem becomes much easier. 
The proof is  basically the same as the proof below 
but does not use Theorem  \ref{hyplin}. 
We just use the (infinitely dimensional) Hilbert   
space ${\it l}^2 (G)$. 
\end{remark} 

\bigskip 

\begin{proof} ({\em Theorem \ref{Novikov}}) 
The idea of this proof is well-known. 
W. Baur was the first who applied it, see \cite{baur}. 
Let 
$$  
( \{ B^{{\bf H}}_n\}_{n\in \omega} ,0,\{ I_{mn} \}_{m<n} ,
\{ \lambda_c \}_{c\in\mathbb{Q}[i]}, +,-,\langle \rangle_{Re} , 
\langle \rangle_{Im}, U_1 ,U_2 ,\ldots , U_n  )   
$$ 
be the dynamical space constructed for $G$ in Theorem  \ref{hyplin}.
Then for any word $w(\bar{g})$ the statement 
$$
\mathsf{sup}_{v\in B_1} d(v, w(\overline{U})v) \le 0  
$$ 
is satisfied in this space if and only if $G\models w(\bar{g})=1$. 
Notice that when $G\models w(\bar{g})=1$ 
the statement above follows from $T_G$. 
This gives the reduction of  the word problem to 
the set of $0$-statements of $T_G$.  
\end{proof}

\begin{remark} 
 It is worth noting that formulas used in the proof of Theorem \ref{Novikov} 
are universal. 
This suggests considering the decidability problem 
for the universal theory of all dynamical (finite dimensional) Hilbert spaces. 
We will study this in Section 4.3. 
Note that Theorem \ref{Novikov} concerns a proper extension 
of it.   
\end{remark}

\begin{thrm} \label{Novikov2}
Assume that there is an algorithm 
which decides for every formula $\phi$ 
of the signature 
$$  
( \{ B_n\}_{n\in \omega} ,0,\{ I_{mn} \}_{m<n} ,
\{ \lambda_c \}_{c\in\mathbb{Q}[i]}, +,-,\langle \rangle_{Re} , 
\langle \rangle_{Im}, U_1 ,U_2 ,\ldots ,U_n  )  
$$ 
which has atomic subformulas only for $B_1$-variables 
and does not have free variables, 
whether $\phi^{o} = 2$ with respect to the theory 
of finite dimensional dynamical Hilbert spaces. 

Then there is not an MF finitely 
presented group $G= \langle g_1 ,...,g_n |\mathcal{ R}\rangle$ 
with undecidable word problem 
so that the amplification bound 
of its MF-approximations $\alpha_0$ is the function 
having only values $0$ and $2$ so that 
$\alpha_0 (g) = 0 \Leftrightarrow g=1$. 
\end{thrm}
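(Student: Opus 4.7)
The plan is a proof by contradiction. Assume both the hypothesized decision algorithm and an MF finitely presented group $G = \langle g_1,\ldots,g_n \mid \mathcal{R}\rangle$ with undecidable word problem and amplification bound $\alpha_0$ taking only the values $0$ and $2$ exist. First I would invoke Theorem \ref{hyplin} for $G$, but strengthen the construction of the ultraproduct $\mathbf{H}$ to exploit the amplification hypothesis. In the notation of that proof, for each pair $(\varepsilon_i, F_i)$ and each nontrivial $g \in F_i$ the hypothesis $\alpha_0(g) = 2$ yields an $(F_i,\varepsilon_i)$-approximation $\gamma_i^{(g)}$ with $d(1,\gamma_i^{(g)}(g)) \ge 2-\varepsilon_i$. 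Taking the orthogonal direct sum $\gamma_i := \bigoplus_{g \in F_i \setminus\{1\}} \gamma_i^{(g)}$, which preserves the approximation properties and computes operator-norm distance to the identity as a coordinatewise maximum, produces a single approximation with $d(1,\gamma_i(g)) \ge 2-\varepsilon_i$ for all nontrivial $g \in F_i$. In the resulting $\mathbf{H}$ we then have both (i) $\sup_{v\in B_1} d(v,r(\bar{U})v) = 0$ for every $r \in \mathcal{R}$ and (ii) $\sup_{v\in B_1} d(v,w(\bar{U})v) = 2$ whenever $G \models w(\bar{g}) \ne 1$.

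For each word $w$ in the generators, associate the sentence
$$
\phi_w := \sup_{v\in B_1} d(v, w(\bar{U})v) \dot{-} \max_{r\in \mathcal{R}} \sup_{v'\in B_1} d(v', r(\bar{U})v'),
$$
which has no free variables and whose atomic subformulas involve only $B_1$-variables, hence falls within the scope of the hypothesized algorithm. The assignment $w \mapsto \phi_w$ is computable. The aim is to establish
$$
\phi_w^{\circ} = 2 \quad \Longleftrightarrow \quad G \models w(\bar{g}) \ne 1,
$$
where $\phi_w^{\circ}$ is the degree of truth with respect to the theory $T$ of finite-dimensional dynamical Hilbert spaces. This reduces the word problem of $G$ to the $\phi^{\circ} = 2$-problem and contradicts undecidability.

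The forward direction is immediate: if $w \ne 1$ in $G$, then (i) and (ii) give $\phi_w^{\mathbf{H}} = 2 \dot{-} 0 = 2$, and since $\mathbf{H} \models T$ by {\L}o\'s's theorem, $\phi_w^{\circ} = 2$. For the reverse direction, assume $w = 1$ in $G$, so $w$ is expressible as a product of some $N_w$ conjugates of elements of $\mathcal{R} \cup \mathcal{R}^{-1}$. Unitarity of the $U_i$'s in every model of $T$ gives $\sup_v d(v, c r^{\pm 1} c^{-1}(\bar{U})v) = \sup_v d(v, r(\bar{U})v)$ for any term $c$ in the $U_i$'s, and combined with the triangle inequality this yields, in any $M \models T$ with $\delta := \max_{r \in \mathcal{R}} \sup_{v'} d(v',r(\bar{U})v')$, the estimate $\sup_v d(v,w(\bar{U})v) \le N_w \delta$. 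Since this sup is also bounded by $2$, one obtains $\phi_w^M \le \min(2,N_w\delta) \dot{-} \delta$, and maximizing over $\delta \in [0,2]$ gives $\phi_w^M \le 2 - 2/N_w < 2$; hence $\phi_w^{\circ} \le 2 - 2/N_w < 2$.

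The main obstacle is precisely this strict bound $\phi_w^{\circ} < 2$ in the case $w = 1$, uniform over all models of $T$ and not merely finite-dimensional ones. Its proof hinges on two ingredients: the expression of $w$ as a finite product of conjugates of relators, which is possible exactly because $w = 1$ in $G$, and the unitarity of the $U_i$'s in every $M \models T$, which makes conjugation an isometry and validates the triangle-inequality estimate. Together these force a quantitative gap $2/N_w$ which, although depending on $w$, is strictly positive whenever $G \models w = 1$, and this is all the reduction requires.
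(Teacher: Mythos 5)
Your proposal is correct and rests on exactly the same reduction as the paper: the same sentence $\phi_w$ and the same equivalence $\phi_w^{\circ}=2 \Leftrightarrow G\models w(\bar{g})\not=1$, which turns the hypothesized algorithm into a word-problem decision procedure. The two directions are executed somewhat differently. For $w\not=1$ the paper merely cites the last paragraph of the proof of Theorem \ref{hyplin} and compactness to get finite satisfiability of $2\dot{-}\phi_w\le\varepsilon$; your orthogonal-direct-sum device, which upgrades the pointwise hypothesis $\alpha_0\equiv 2$ on $G\setminus\{1\}$ to a single $(F_i,\varepsilon_i)$-approximation that is $(2-\varepsilon_i)$-separating on all of $F_i\setminus\{1\}$, is a legitimate filling-in of a detail the paper leaves implicit (though for a fixed $w$ one approximation witnessing $\alpha(w(\bar{g}))\ge 2-\varepsilon$ already suffices, so the direct sum buys uniformity in $w$ rather than being strictly necessary). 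For $w=1$ the paper argues softly: by compactness the supremum is attained in some model, where $\phi_w=2$ would force every relator to act as the identity, hence $w(\bar{U})=\mathrm{Id}$ and $\phi_w=0$, a contradiction. You instead write $w$ as a product of $N_w$ conjugates of relators and, using unitary invariance of the operator norm and the telescoping estimate, obtain the uniform bound $\phi_w^{M}\le 2-2/N_w$ in every model of the theory; this is more elementary (no appeal to attainment of the supremum) and yields an explicit quantitative gap, at the price of invoking the normal-closure description of the relation kernel. Both routes are sound, and each step of your version checks out.
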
 

\begin{remark} 
It is worth mentioning that the maximal value of a formula 
$\phi$ of the theory of dynamical Hilbert spaces  
which has atomic subformulas only for $B_1$-variables 
and which does not have free variables, 
is $2$ (see Section 1).    
\end{remark}

\begin{proof} ({\em Theorem \ref{Novikov2}}) 
Assume the contrary. 
Let $G= \langle g_1 ,...,g_n |\mathcal{ R}\rangle$ be 
an MF finitely presented group as in the formulation 
and let $\alpha_0$ be the corresponding amplification bound. 
For any word $w(\bar{g})$ 
consider the formula 
$$ 
\phi_w = \mathsf{sup}_{v\in B_1} d(v, w(\overline{U})v) \dot{-} 
\mathsf{max} \{ \mathsf{sup}_{v\in B_1} d(v, w'(\overline{U})v)  
| \mbox{  where } 
w'(\bar{g})\in \mathcal{ R} \}. 
$$
If $w(\bar{g})$ is not equal to $1$ in $G$ then 
$\alpha_0 (w(\bar{g})) = 2$. 
By the argument of the final paragraph of the proof 
of Theorem \ref{hyplin} the set of all statements 
$2 \dot{-} \phi_w \le \varepsilon$ is finitely satisfiable
with respect to the theory of finite dimensional dynamical Hilbert spaces. 
By compactness (see Section 2 of \cite{BYU}) we see that 
$(\phi_w )^o =2$ with respect to this theory.

On the other hand note that in the case 
$G\models w(\bar{g})=1$ the equality $(\phi_w )^o =2$ 
does not hold. 
Indeed if a dynamical Hilbert space as in the formulation 
realizes this equality then all the statements    
$$
\mathsf{sup}_{v\in B_1} d(v, w' (\overline{U})v) \le 0 \mbox{ , where } 
w' (\bar{g})\in \mathcal{ R} ,  
$$ 
are also realized in it 
(by $\mathsf{sup}_{v\in B_1} d(v, w(\overline{U})v) \le 2$). 
Then by the definition of $G$ the map $w(\overline{U})$ 
defines the identity operator in this structure, a contradiction.  

We now see that the problem of the equality  
$(\phi_w )^o = 2$ with respect to the theory 
of finite dimensional dynamical Hilbert spaces is not decidable. 
This is a contradiction with our assumption. 
\end{proof}

\subsection{Universal theory} 

In this section we study the universal theory of 
finite dimensional dynamical Hilbert spaces 
with unitary operators $U_1 ,\ldots U_t$. 
The following proposition is the crucial observation of this section. 
It is obviously related to Theorem \ref{hyplin}.  

\begin{prop} \label{emb} 
Any dynamical Hilbert space is embeddable into 
a metric ultraproduct of finite dimensional Hilbert spaces. 
\end{prop}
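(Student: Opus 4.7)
The plan is to invoke the standard criterion that embeddability of a continuous structure $M$ into a metric ultraproduct of a class $\mathcal{K}$ reduces to finite realisability of quantifier-free types of tuples in $M$ inside members of $\mathcal{K}$. Concretely, it will suffice to produce, for every finite tuple $\bar{v}=(v_1,\ldots,v_n)$ in $H$ and every $m\in\mathbb{N}$, a finite dimensional dynamical Hilbert space $(W,\tilde{U}_1,\ldots,\tilde{U}_t)$ and a tuple $\bar{v}'\in W^n$ satisfying
\[
\langle w_1(\tilde{U})v'_i,\,w_2(\tilde{U})v'_j\rangle=\langle w_1(U)v_i,\,w_2(U)v_j\rangle
\]
for all $i,j\le n$ and all words $w_1,w_2$ in $U_1^{\pm 1},\ldots,U_t^{\pm 1}$ of length $\le m$. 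Indexing such data by pairs $(F,m)$ with $F\subseteq H$ finite, and choosing an ultrafilter that concentrates on arbitrarily large $F$ and $m$, the diagonal map $v\mapsto[v]$ will then give the required isometric embedding preserving all $U_k$.

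For the local construction I would take $W\subseteq H$ to be the finite dimensional linear span of $\{w(U)v_i:1\le i\le n,\ |w|\le m+1\}$ with the inherited inner product, and let $W'\subseteq W$ be the span of the same vectors with $|w|\le m$. Because $U_k\cdot w(U)v_i=(U_kw)(U)v_i$ and $|U_kw|\le m+1$, the unitary $U_k$ sends $W'$ into $W$ and so restricts to a linear isometry $U_k|_{W'}\colon W'\to W$. The orthogonal complements $W\ominus W'$ and $W\ominus U_k(W')$ both have dimension $\dim W-\dim W'$, so any linear isometry between them, pasted with $U_k|_{W'}$, yields a unitary $\tilde{U}_k$ on $W$ extending $U_k|_{W'}$.

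The crucial verification is that $w(\tilde{U})v_i=w(U)v_i$ as elements of $W\subseteq H$ for every word $w$ with $|w|\le m+1$; an induction on length, processing the word letter by letter from the right, delivers this, since each intermediate vector lies in $W'$ and thereon $\tilde{U}_k$ coincides with $U_k$ by design. Consequently all the required Gram matrix entries coincide \emph{exactly} (not merely approximately), so taking $\bar{v}'=\bar{v}$ closes the local step, and the ultraproduct then glues the pieces into a single embedding. The main obstacle is conceptual rather than computational: one has to see that inflating $W'$ by exactly one extra layer of $U_k$-images provides just enough room to extend every partial isometry $U_k|_{W'}$ to an honest unitary on $W$ \emph{without} disturbing any inner product at depth $\le m$, so that the approximation in the ultraproduct is in fact exact on each finite window.
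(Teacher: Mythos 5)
Your proof is correct, but it follows a genuinely different route from the paper's. Both arguments begin with the same reduction: embeddability into a metric ultraproduct of finite dimensional spaces amounts to realising, up to $\varepsilon$, the quantifier-free data of each finite tuple inside some finite dimensional dynamical Hilbert space. The paper then carries out the local step by passing to a separable elementary substructure (L\"owenheim--Skolem) and invoking Rosendal's machinery (Section 7 and Lemma 7.4 of \cite{rosendal}): one finds a countable algebraically closed subfield $\mathcal{Q}\subset\mathbb{C}$ and a dense $\mathcal{Q}$-subspace on which the operators can be replaced by finitary unitaries, yielding an $\varepsilon$-approximation in a finite dimensional space. You instead use the orbit-span compression and unitary-dilation trick: restrict each $U_k$ to the span $W'$ of the depth-$\le m$ orbit of the tuple, note that it is a partial isometry into the depth-$\le m+1$ span $W$, and extend it to a unitary of $W$ by matching the orthogonal complements $W\ominus W'$ and $W\ominus U_k(W')$, which have equal dimension. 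This is more elementary and self-contained (no appeal to \cite{rosendal}), and it buys exact rather than approximate agreement of the relevant Gram entries; the paper's route, on the other hand, naturally lands in spaces defined over a countable algebraically closed field with algebraic inner products, which is consonant with the effectivity theme exploited in the surrounding sections. One small correction: your claim that $w(\tilde{U})v_i=w(U)v_i$ for all words of length $\le m+1$ is too strong when the leftmost letter is an inverse $U_k^{-1}$, because $\tilde{U}_k^{-1}$ is only guaranteed to agree with $U_k^{-1}$ on $U_k(W')$, and $W'$ need not be contained in $U_k(W')$. This does not damage the proof: for the Gram entries you actually use, both $w_1$ and $w_2$ have length $\le m$, and for such words every intermediate vector $x$ \emph{and} its image $U_k^{\pm 1}x$ lie in $W'$ (they are evaluations of suffixes of length $\le m$), so in the inverse step one has $\tilde{U}_k(U_k^{-1}x)=U_k(U_k^{-1}x)=x$ and hence $\tilde{U}_k^{-1}x=U_k^{-1}x$, and the induction closes.
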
 

\begin{proof} 
Let $M$ be a dynamical Hilbert space. 
It  suffices to show that 
for every rational $\varepsilon$, every quantifier-free formula $\psi (\bar{x})$ 
and every tuple $\bar{c}\in M$ 
there is a finite  dimensional $N$ and $\bar{c}' \in N$ such that 
$|\psi (\bar{c})^M -\psi (\bar{c}' )^N | \le \varepsilon$.  
Indeed having this we can approximate all $\psi (\bar{c})^M$ 
by values of some $\psi (\bar{c}')$ in finite dimensional spaces 
and then just apply the  version of {\L}o\'{s}'s theorem for metric ultraproducts. 

Applying the L\"{o}wenheim-Skolem theorem if necessary 
we can arrange that $\bar{c}$ is taken from a separable $M$. 
We may assume that all terms appearing in $\psi (\bar{c})$ 
belong to a finite dimensional 
subspace $L < M$.  
Applying arguments of Section 7 of \cite{rosendal} 
we find a countable algebraically closed subfield 
$\mathcal{Q} <\mathbb{C}$ which is closed under complex conjugation, 
and a countable dense $\mathcal{Q}$-subspace $M' <M$ 
containing $L$ such that the inner product and the norm 
on $M'$ takes values in $\mathcal{Q}$.  
Since the formula $\psi (\bar{z})$ is a uniformly continuous function 
on $M$ we may approximate the operators $U_1, \ldots , U_t$ 
on $L$ by unitary operators say $U'_1, \ldots ,U'_t$ on $M'$ 
so that $\psi (\bar{c})^M$ is sufficiently close 
to $\psi (\bar{c})$ in $M'$. 
We now apply Lemma 7.4 of \cite{rosendal} and 
make $U'_1 ,\ldots ,U'_t$ finitary. 
As a result we obtain a finite dimensional 
dynamical $\mathcal{Q}$-subspace 
$M'' < M'$ so that the value of $\psi (\bar{c})$ in $M''$ belongs to    
$[\psi (\bar{c})^M -\varepsilon ,  \psi (\bar{c})^M +\varepsilon  ]$.  
It is densely contained in a finite dimensional dynamical 
Hilbert space over $\mathbb{C}$, say $N$.  
This finishes the proof. 
\end{proof}

Preserving the notation of Section 3.1 let $T_{f.d}$ be the theory of 
all finite dimensional dynamical Hilbert spaces 
with unitary operators $U_1 ,\ldots U_t$.
Let us consider the universal  (i.e. $\mathsf{sup}$)-sentences of this 
theory. 
The following corollary of Proposition \ref{emb} states that their values  
coincide with ones corresponding to 
the theory of all dynamical Hilbert spaces.   

\begin{prop} \label{rose} 
Let $\phi$ be a universal sentence 
of the language of dynamical Hilbert spaces.  
Then $\phi^{\circ}$ with respect to 
the theory of all dynamical Hilbert spaces 
coincides with $\phi^{\circ}$ with respect to $T_{f.d}$. 
\end{prop}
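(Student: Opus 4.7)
The plan is to prove the two inequalities separately, leveraging Proposition \ref{emb} for the nontrivial direction. The inequality $\phi^{\circ}_{T_{f.d}} \le \phi^{\circ}$ (the latter taken with respect to the theory of all dynamical Hilbert spaces) is essentially immediate: every model of $T_{f.d}$ is in particular a dynamical Hilbert space of the given signature, so the supremum of $\phi^{M}$ over models of $T_{f.d}$ is at most the supremum over all dynamical Hilbert spaces.

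For the reverse inequality $\phi^{\circ} \le \phi^{\circ}_{T_{f.d}}$, I would take an arbitrary dynamical Hilbert space $M$ and apply Proposition \ref{emb} to obtain an isometric embedding of $M$ into a metric ultraproduct $N = \prod_{i/\mathcal{U}} N_i$, where each $N_i$ is a finite dimensional dynamical Hilbert space. Writing the universal sentence as $\phi = \mathsf{sup}_{\bar{x}}\,\psi(\bar{x})$ with $\psi$ quantifier-free, the key observation is that quantifier-free formulas take the same values on tuples in $M$ whether computed in $M$ or in $N$. Consequently
$$
\phi^{M} \;=\; \mathsf{sup}_{\bar{c}\in M^{k}} \psi^{M}(\bar{c}) \;=\; \mathsf{sup}_{\bar{c}\in M^{k}} \psi^{N}(\bar{c}) \;\le\; \mathsf{sup}_{\bar{c}\in N^{k}} \psi^{N}(\bar{c}) \;=\; \phi^{N}.
$$

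The final step is to apply \L o\'{s}'s theorem for metric ultraproducts, which gives $\phi^{N} = \lim_{\mathcal{U}} \phi^{N_i}$. Since each $N_i$ is a model of $T_{f.d}$, we have $\phi^{N_i} \le \phi^{\circ}_{T_{f.d}}$ for every $i$, and hence $\phi^{N} \le \phi^{\circ}_{T_{f.d}}$. Combining this with the previous display yields $\phi^{M} \le \phi^{\circ}_{T_{f.d}}$ for every dynamical Hilbert space $M$, and taking the supremum over $M$ gives the desired inequality.

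The only real content has already been carried out in Proposition \ref{emb}; the present proposition is a clean formal consequence together with the standard fact that universal sentences in continuous logic are monotone under embeddings. Thus there is no serious obstacle beyond quoting these two ingredients correctly.
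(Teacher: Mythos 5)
Your proof is correct and follows essentially the same route as the paper: the easy inequality is the same observation, and the hard one rests on Proposition \ref{emb}, the monotonicity of universal sentences under embeddings, and {\L}o\'{s}'s theorem for metric ultraproducts. The paper merely phrases this direction contrapositively, passing to the existential sentence $2 \dot{-} \phi$ and extracting a single finite dimensional factor where $\phi$ exceeds $\phi^{\circ}_{T_{f.d}}$, which is the same argument in a different order.
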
  

\begin{proof} 
We may assume that all possible values of 
$\phi$ in dynamical Hilbert spaces belong to $[0,2]$.  
Let $r$ be the value of $\phi^{\circ}$ with respect to 
the theory of all dynamical Hilbert spaces and 
$r'$ be the value of $\phi^{\circ}$ with respect to 
the theory $T_{f.d}$. 
It is clear that $r' \le r$. 
To see $r\le r'$ assume the contrary and 
find a rational number $q \in [r',r]$. 
Then there is a separable dynamical Hilbert space $M$ 
such that $q < \phi^{M}$.  
This is equivalent to the condition that 
in $M$ the existential formula 
$2 \dot{-} \phi$ has the value which is less than $2 - q$. 
Since $M$ is embeddable into a metric ultraproduct of 
finite dimensional dynamical Hilbert spaces 
(by Proposition \ref{emb}), 
it is clear that $2\dot{-}\phi$ has value $< 2-q$ 
in some finite dimensional space, i.e. $\phi$ takes value $>r'$ in this space, 
a contradiction.  
\end{proof}

Using decidability of the theory of algebraically closed fields 
we fix an effective indexation of all $t$-tuples of 
unitary matrices of algebraic numbers so that any tuple 
consists of matrices of the same dimension. 
If $\bar{C}$ is such a tuple let $H_{\bar{C}}$ 
be the corresponding dynamical Hilbert space 
of the same dimension as the dimension of matrices in $\bar{C}$, 
say $n$. 
The following statements describe $\bar{C}$ (corresponding to $\bar{U}$):  
$$ 
\mathsf{inf}_{x_1 ,...,x_n\in B_1} \mathsf{max} (\mathsf{max}_{1\le i<j\le n} (|\langle x_i ,x_j\rangle -\delta_{i,j} |)   
$$
$$ 
(\mbox{ where } \delta_{i,j} \in \{ 0,1\} \mbox{ with } \delta_{i,j} =1 \leftrightarrow i=j ) \mbox{ , }
$$  
$$ 
\mathsf{max}_{1\le l\le n} \mathsf{max}_{1\le j\le t} (\parallel U_j (x_l )-\sum \lambda_{c_{j,l,k}} (x_k ) \parallel \dot{-} \varepsilon_l ))\le 0 , 
$$ 
$$
\mbox{ where } \varepsilon_l \in \mathbb{Q} 
\mbox{ and } c_{j,l,k} \in \mathbb{Q}[i] 
\mbox{ are appropriate approximations } 
$$ 
$$ 
\mbox{ of entries of matrices for } 
C_1 ,\ldots ,C_t . 
$$ 
For every $\bar{C}$ we fix a computable sequence 
of such axioms, say $\Sigma_{\bar{C}}$. 
Let $\hat{T}$ be the extension of the theory of all 
dynamical Hilbert spaces obtained by the additional 
family axioms consisting of the union 
of all $\Sigma_{\bar{C}}$. 
We see that $\hat{T}$ is computably axiomatizable.  
Let 
$$ 
\hat{H} = \bigoplus \{ H_{\bar{C}} : \bar{C} 
\mbox{ is a tuple of unitary matrices of algebraic numbers } \} .
$$  
Then $\hat{H} \models \hat{T}$. 
Since any finite dimensional dynamical Hilbert space 
is embeddable into a metric ultraproduct of spaces of the form $H_{\bar{C}}$, 
it is also embeddable into an ultrapower of $\hat{H}$. 
On the other hand $\hat{H}$ is embeddable into a ultraproduct of all $H_{\bar{C}}$. 
Thus applying Proposition \ref{rose} we have the first statement 
of the following lemma. 

\begin{lem} \label{value} 
(a) For any universal sentence $\phi$ the value $\phi^{\circ}$ 
with respect to $\hat{T}$ coincides  
with the value $\phi^{\circ}$ with respect to the theory of 
all finite dimensional dynamical Hilbert spaces. 
The latter value coincides with the value  $\phi^{\circ}$ 
with respect to the theory $Th(\hat{H})$. 

(b) For every existential sentence $\psi$ all values of $\psi$ in all models 
of $\hat{T}$ are the same. 
\end{lem}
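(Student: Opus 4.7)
My plan is to reduce every $\phi^{\circ}$ computation to evaluation on the distinguished model $\hat{H}$, building on the observation made in the paragraph immediately preceding the lemma.

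For part~(a), I first establish $\phi^{\circ}_{\hat{T}} = \phi^{\circ}_{T_{f.d.}}$. Since $\hat{T}$ extends the theory of all dynamical Hilbert spaces, and by Proposition~\ref{rose} the value $\phi^{\circ}$ on a universal $\phi$ already agrees between that larger theory and $T_{f.d.}$, one direction is immediate: $\phi^{\circ}_{\hat{T}} \le \phi^{\circ}_{T_{f.d.}}$. For the reverse, given any finite-dimensional dynamical Hilbert space $N$, the uniform continuity of $\phi$ in the matrix entries of $\bar{U}$ lets me pick an algebraic tuple $\bar{C}$ so that $\phi^{N}$ and $\phi^{H_{\bar{C}}}$ are arbitrarily close. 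Since $H_{\bar{C}}$ is a direct summand of $\hat{H}$ and $\phi$ is a $\mathsf{sup}$-formula, $\phi^{H_{\bar{C}}} \le \phi^{\hat{H}} \le \phi^{\circ}_{\hat{T}}$; passing to the limit and then to the supremum over $N$ yields $\phi^{\circ}_{T_{f.d.}} \le \phi^{\circ}_{\hat{T}}$. The second equality $\phi^{\circ}_{T_{f.d.}} = \phi^{\circ}_{Th(\hat{H})}$ then follows because $Th(\hat{H})$ is a complete continuous theory, so $\phi^{\circ}_{Th(\hat{H})}$ is just the single value $\phi^{\hat{H}}$, which the sandwich above pins to the common value.

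For part~(b), fix $M \models \hat{T}$ and an existential $\psi = \mathsf{inf}_{\bar{x}}\, \chi(\bar{x})$; my aim is $\psi^{M} = \psi^{\hat{H}}$. For the upper bound, each family of axioms $\Sigma_{\bar{C}}$ supplies approximate witnesses in $M$ realising an $\bar{U}$-invariant copy of $H_{\bar{C}}$; feeding these into $\psi$ gives $\psi^{M} \le \psi^{H_{\bar{C}}} + \varepsilon$ for every $\varepsilon > 0$, so $\psi^{M} \le \mathsf{inf}_{\bar{C}}\, \psi^{H_{\bar{C}}}$. A density argument---any finite-support vector of $\hat{H}$ lies in a finite direct sum $H_{\bar{C}_{1}} \oplus \cdots \oplus H_{\bar{C}_{k}}$, which itself appears in the enumeration as some $H_{\bar{C}'}$---identifies $\mathsf{inf}_{\bar{C}} \psi^{H_{\bar{C}}}$ with $\psi^{\hat{H}}$. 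For the lower bound, Proposition~\ref{emb} embeds $M$ isometrically into a metric ultraproduct $\prod N_{i}/D$ of finite-dimensional dynamical Hilbert spaces; approximating each $N_{i}$ by some $H_{\bar{C}_{i}} \hookrightarrow \hat{H}$ and passing to the ultralimit yields an isometric embedding of $M$ into an ultrapower of $\hat{H}$. Since existential formulas are non-increasing under embeddings, and by {\L}o\'{s}'s theorem for continuous logic the value of $\psi$ in the ultrapower is $\psi^{\hat{H}}$, we get $\psi^{M} \ge \psi^{\hat{H}}$.

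The step I expect to require the most care is the lower bound of (b): passing from the only approximately isometric maps $N_{i} \to \hat{H}$ (operator entries of $N_{i}$ need not be algebraic) to a genuine isometric embedding $M \hookrightarrow \hat{H}^{I}/D$. The point is that with the approximation errors $\varepsilon_{i} \to 0$ along the ultrafilter these errors vanish in the ultralimit, but the choice of approximating $\bar{C}_{i}$ has to be made uniformly enough in $i$ for the resulting ultralimit map to be a bona fide homomorphism of dynamical Hilbert spaces rather than merely an asymptotic one.
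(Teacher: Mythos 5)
Your proposal is correct and follows essentially the same route as the paper: both arguments reduce everything to the distinguished model $\hat{H}$ via approximation of arbitrary finite-dimensional unitaries by algebraic tuples $\bar{C}$, mutual embeddability into ultrapowers (using Proposition \ref{emb} and Proposition \ref{rose}), and monotonicity of $\mathsf{sup}$-/$\mathsf{inf}$-sentences under embeddings. The only cosmetic difference is that for the upper bound in (b) the paper embeds $\hat{H}$ into an ultrapower of $M$, whereas you compute $\psi^{M}\le \mathsf{inf}_{\bar{C}}\psi^{H_{\bar{C}}}=\psi^{\hat{H}}$ directly by a density argument; these amount to the same thing.
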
 

\begin{proof} 
(b) Since any dynamical Hilbert space is embeddable into 
a ultrapower of $\hat{H}$ the value of $\psi$ in $\hat{H}$ is minimal among 
all possible values. 
On the other hand since any model $M\models \hat{T}$ contains all $H_{\bar{C}}$ 
the space $\hat{H}$ is embeddable into a ultrapower of $M$. 
This shows that the values of $\psi$ in $M$ and $\hat{H}$ are the same. 
\end{proof} 

We can now prove the main result of Section 3.3. 

\begin{thrm} 
The universal theory of all dynamical Hilbert spaces is decidable. 
\end{thrm}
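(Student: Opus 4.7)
The plan is to compute $\phi^{\circ}$ for any universal sentence $\phi$ by enumerating a convergent family of rational upper bounds and a convergent family of rational lower bounds in parallel, then reading $\phi^{\circ}$ off the squeeze.

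For the upper bounds, observe that Proposition \ref{rose} together with Lemma \ref{value}(a) identify three degrees of truth for a universal $\phi$: the value $\phi^{\circ}$ with respect to the theory of all dynamical Hilbert spaces equals $\phi^{\circ}$ with respect to $T_{f.d}$, which in turn equals $\phi^{\circ}$ with respect to $\hat{T}$. Since $\hat{T}$ is computably axiomatizable by construction, the approximated strong completeness result recalled from \cite{BYP} gives
$$
\phi^{\circ} = \inf\{\, p \in \mathbb{Q} : \hat{T} \vdash \phi \dot{-} p \,\},
$$
and the set on the right is computably enumerable in $\phi$. Enumerating it produces a computable sequence of rational upper bounds converging down to $\phi^{\circ}$.

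For the lower bounds, I would use the structures $H_{\bar{C}}$ indexed (via the effective enumeration already fixed in Section 4.3) by tuples $\bar{C}$ of unitary matrices over the algebraic complex numbers. By Lemma \ref{comp} the complete theory of each $H_{\bar{C}}$ is axiomatized by the computably given $\Sigma_{\bar{C}}$, so by Corollary 9.11 of \cite{BYP} the real $\phi^{H_{\bar{C}}}$ is uniformly computable from $\phi$ and the index of $\bar{C}$. The approximation argument inside the proof of Theorem \ref{N-dim} (any finite dimensional unitary transformation is, by compactness of $B_1$ and density of the gate basis $\mathcal{Q}$, arbitrarily close in operator norm to a unitary over the algebraic complex numbers, and uniformly continuous formulas transport this to closeness of their values) then shows that
$$
\sup_{\bar{C}} \phi^{H_{\bar{C}}} \,=\, \phi^{\circ} \text{ with respect to } T_{f.d}.
$$
Enumerating the computable reals $\phi^{H_{\bar{C}}}$ to increasing precision therefore yields a computable sequence of rational lower bounds converging up to $\phi^{\circ}$.

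Given $\phi$ and a rational $\delta>0$, I would then run the two enumerations in parallel, maintaining the current best upper bound $u$ and lower bound $\ell$; as soon as $u - \ell \le \delta$ I output $(u+\ell)/2$, which lies within $\delta$ of $\phi^{\circ}$. Both sequences converge to $\phi^{\circ}$, so termination is guaranteed. The main potentially delicate point is making the algebraic-approximation step work uniformly across unbounded dimensions: one must verify that approximating each individual finite dimensional $M$ within its own dimension (where compactness of $B_1$ and the density-of-gates statement apply directly) already suffices to realize $\phi^{\circ}$ with respect to $T_{f.d}$ as the global supremum over all $H_{\bar{C}}$. This is exactly what Proposition \ref{rose} combined with the dimension-by-dimension approximation of Theorem \ref{N-dim} delivers, so no further work is needed there.
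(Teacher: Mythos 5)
Your proposal is correct. The upper-bound half coincides with the paper's: both enumerate derivations $\hat{T}\vdash \phi \dot{-} r$ and use Lemma \ref{value}(a) together with Corollary 9.8 of \cite{BYP} to conclude that these $r$ converge down to $\phi^{\circ}$. Where you genuinely diverge is in the lower bounds. The paper extracts them from the \emph{same} proof enumeration applied to the existential sentence $2\dot{-}\phi$: Lemma \ref{value}(b) guarantees that $2\dot{-}\phi$ takes one and the same value in every model of $\hat{T}$, so the derivable bounds $(2\dot{-}\phi)\dot{-}s$ yield numbers $2-s$ converging up to $\phi^{\circ}$. You instead compute $\phi^{H_{\bar{C}}}$ directly in the effectively enumerated algebraic finite-dimensional models, using Lemma \ref{comp} and Corollary 9.11 of \cite{BYP} for uniform computability and the approximation argument of Theorem \ref{N-dim} to justify $\sup_{\bar{C}}\phi^{H_{\bar{C}}}=\phi^{\circ}$ with respect to $T_{f.d}$. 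This is in effect the Ershov mechanism of Theorem \ref{Ersov} specialized to universal sentences: you re-derive the computable enumerability of lower bounds semantically, from an effective family of complete decidable theories, rather than proof-theoretically, and you bypass Lemma \ref{value}(b) entirely. The trade-off is that the paper's version runs a single proof search and is self-contained once Lemma \ref{value} is in place, while yours leans on the additional (but already established) facts that each $\Sigma_{\bar{C}}$ axiomatizes a complete decidable theory and that every finite-dimensional space is approximated by some $H_{\bar{C}}$; in exchange your argument makes the source of the lower bounds concrete and would survive even without part (b) of Lemma \ref{value}.
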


\begin{proof} 
According Lemma \ref{value}  if $\phi$ is a universal sentence, then for the theory $\hat{T}$ 
the value of the existential sentence $2 \dot{-} \phi$  coincides with $2 \dot{-} \phi^{\circ}$. 
Moreover the value $\phi^{\circ}$ is the same for $\hat{T}$ and for the theory of all dynamical Hilbert spaces. 
Thus the following algorithm always gives the result. 
Given universal $\phi$ and a rational $\varepsilon$, run all proofs from $\hat{T}$ and wait until you see that 
$\hat{T}\vdash \phi \dot{-} r$ and  $\hat{T}\vdash (2 \dot{-} \phi ) \dot{-} s$, where $r$ 
and $s$ are rational numbers so that  $r - (2-s)\le \varepsilon$. 
As a result one of the numbers $r$ or $s$ 
belongs to the interval of length $\varepsilon$ which contains 
$\phi^{\circ}$ with respect to the theory of all dynamical Hilbert spaces. 
\end{proof}

\subsection{Pseudocompactness} 

The author does not know if any dynamical Hilbert space 
is elementarily equivalent to an ultraproduct of 
finite dimensional dynamical Hilbert spaces 
(i.e. if its unit ball is pseudocompact). 
In this subsection we discuss some natural cases 
where we prove/expect the positive answer. 
Note that by the results of Section 4.1 
the global positive solution of this problem 
implies decidability of  the theory of 
all dynamical Hilbert spaces 
(and the theory of pseudo finite dimensional ones too). 
Moreover it also implies that any finitely generated group is MF 
(see the comment after Theorem \ref{hyplin}). 

According to \cite{ber} for any countable group $G$ 
the theory of all unitary $G$-representations 
has a model completion. 
It can be described as follows. 

Let $\infty H_{\rho}$ be a dynamical Hilbert space 
corresponding to a representation of $G$ which is maximal with 
respect to almost containedness $\prec$ (see Definition \ref{alco}). 
Theorem 2.11 of \cite{ber} states that 
the class of existentially closed unitary representations of $G$ 
is axiomatized by all $0$-$\mathsf{inf}$-statements which 
hold in $\infty H_{\rho}$. 
Moreover by the amalgamation property the theory 
of these representations is complete. 

Note all dynamical Hilbert spaces 
with $t$ operators form the class of all representations 
of the free $t$-generated group $F_t$. 
Thus the class of all existentially closed 
representations of $F_t$ coincides with the model 
completion of the theory of all dynamical Hilbert spaces.  
The following observation was pointed out to the author by 
E. Hrushovski 
(who applied a different argument). 

\begin{prop} 
Existentially closed dynamical Hilbert spaces are 
pseudo finite dimensional. 
\end{prop}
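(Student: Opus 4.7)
My plan is to exploit completeness of the model completion of the theory of all dynamical Hilbert spaces together with Proposition \ref{emb}. Since this model completion is complete (by the amalgamation argument cited above), all existentially closed dynamical Hilbert spaces are elementarily equivalent to one another, so it is enough to exhibit a single existentially closed model that is pseudo finite dimensional.

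I apply Proposition \ref{emb} to the canonical existentially closed representation $\infty H_\rho$, obtaining an embedding $\infty H_\rho \hookrightarrow N$ into an ultraproduct $N$ of finite dimensional dynamical Hilbert spaces. Because $\infty H_\rho$ is existentially closed, this embedding preserves the values of $\inf$-sentences: for every quantifier-free $\phi$ one has $(\inf_{\bar{y}} \phi(\bar{y}))^{\infty H_\rho} = (\inf_{\bar{y}} \phi(\bar{y}))^{N}$. In particular every $0$-$\inf$-statement holding in $\infty H_\rho$ also holds in $N$. By the cited Theorem 2.11 of \cite{ber} the class of existentially closed dynamical Hilbert spaces is axiomatized by precisely these $0$-$\inf$-statements, so $N$ is itself existentially closed. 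But $N$ is pseudo finite dimensional by construction, being an ultraproduct of finite dimensional structures. Completeness of the model completion now yields $M \equiv N$ for every existentially closed $M$, and hence every such $M$ is pseudo finite dimensional.

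The only subtle step is the second one: one must recognize that the definition of existential closure in continuous logic is exactly the preservation of $\inf$-values (also with parameters), which is what is needed to transfer the $0$-$\inf$-axiomatization of the model completion from $\infty H_\rho$ to its extension $N$. Beyond this observation, the argument is a direct application of Proposition \ref{emb} and of the completeness of the model completion; I do not expect any further obstacle.
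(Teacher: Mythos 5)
Your proposal is correct and follows essentially the same route as the paper: embed into an ultraproduct of finite dimensional spaces via Proposition \ref{emb}, observe that the $0$-$\mathsf{inf}$-statements axiomatizing existential closedness transfer to the extension, invoke Theorem 2.11 of \cite{ber} to conclude the ultraproduct is existentially closed, and finish by completeness of the model completion. (The only cosmetic difference is that the transfer of $0$-$\mathsf{inf}$-statements needs only monotonicity of $\mathsf{inf}$ under extensions, not the full strength of existential closedness of $\infty H_{\rho}$.)
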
 

\begin{proof} 
By Proposition \ref{emb} any existentially closed 
dynamical Hilbert space is contained in 
a pseudo finite dimensional one. 
All $0$-$\mathsf{inf}$-statements which 
hold in $\infty H_{\rho}$ also hold 
in the corresponding pseudo finite dimensional 
dynamical Hilbert space. 
By Theorem 2.11 of \cite{ber} it is also existentially closed 
and by completeness of the theory is elementarily 
equivalent to $\infty H_{\rho}$. 
\end{proof} 

This proposition justifies the following question. 

\begin{quote} {\em 
\item  Given finitely generated group $G$ are existentially 
closed dynamical Hilbert spaces corresponding to representations of $G$ 
pseudo finite dimensional?  }
\end{quote} 

By Theorem 2.15 of \cite{ber} a countable group $G$ is amenable 
if and only if the direct sum of countably many copies of $l^2 (G)$, 
the regular unitary representation of $G$, is existentially closed. 
Thus it is easy to see that in the amenable case 
the positive answer to this question follows from 
the statement that for any finitely generated group $G$ 
the dynamical space $l^2 (G)$ 
is pseudo finite dimensional.  
As we already know the latter statement implies 
that any finitely generated group is MF. 

We have the following partial result. 
This can be considered as a stronger version 
of the statement that any finitely generated LEF group 
is MF, see \cite{CDE}. 

\begin{prop} \label{LEF} 
Let $G$ be a finitely generated LEF group. 
Then the dynamical $G$-space $l^2 (G)$ is pseudo finite dimensional. 
\end{prop}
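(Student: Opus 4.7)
The plan is to exhibit $\ell^2(G)$ as an equivariantly embedded closed subspace of a metric ultraproduct of finite dimensional regular representations built from LEF approximations of $G$, and then to show that this embedding is elementary. First, using the LEF property, fix an exhausting sequence of finite symmetric subsets $K_n \subset G$ containing $\{g_1,\ldots,g_t\}$ with $K_n \uparrow G$, and for each $n$ choose a finite group $H_n$ together with an injective partial homomorphism $\phi_n \colon K_n \to H_n$ (so $\phi_n(gh)=\phi_n(g)\phi_n(h)$ whenever $g,h,gh\in K_n$). For each $n$ large enough so that $g_1, \ldots, g_t \in K_n$, interpret the finite dimensional dynamical Hilbert space $\mathcal{H}_n:=\ell^2(H_n)$ by letting each $U_j$ act as left translation by $\phi_n(g_j)$.

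Next, fix a non-principal ultrafilter $D$ on $\omega$ and form the metric ultraproduct $M:=\prod_D \mathcal{H}_n$. On the dense subspace of finitely supported vectors in $\ell^2(G)$ define $V(\delta_g):=[\delta_{\phi_n(g)}]_D$. Injectivity of the $\phi_n$ on $K_n$ forces $\langle V(\delta_g),V(\delta_{g'})\rangle=\delta_{g,g'}$, so $V$ is isometric; the partial homomorphism property ensures that $V$ intertwines each $U_j$ with its ultraproduct avatar. Extending by linearity and continuity yields an isometric, equivariant, closed embedding $V\colon\ell^2(G)\hookrightarrow M$.

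To conclude, one must show that $V$ is elementary, which would immediately yield $\ell^2(G)\equiv M$ and hence pseudo finite dimensionality. Here one works with the vector $\delta_e\in\ell^2(G)$, whose moments $\langle w(\bar U)\delta_e,\delta_e\rangle$ realise the canonical trace $\tau_G$ on $\mathbb{C}[G]$, together with its counterpart $[\delta_{1_{H_n}}]_D\in M$, whose moments converge along $D$ to the very same trace by LEF and injectivity of the $\phi_n$. A GNS-type argument then shows that the closed $G$-cyclic subspace of $M$ generated by $[\delta_{1_{H_n}}]_D$ is equivariantly isometric to $\ell^2(G)$ and coincides with $V(\ell^2(G))$. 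Writing $M=V(\ell^2(G))\oplus V(\ell^2(G))^\perp$ as a sum of invariant dynamical Hilbert spaces (the complement is unitary invariant by equivariance of $V$), it remains to absorb the orthogonal complement into the theory of $\ell^2(G)$.

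The main obstacle is this absorption step, since being an equivariant closed substructure of $M$ does not in general force elementarity for dynamical Hilbert spaces: their continuous theory with several generators is not known to admit quantifier elimination. The argument must therefore either generalise C.\ W.\ Henson's spectral classification from the single-operator setting (quoted in Section 2.3) to joint actions of several unitaries, or, more robustly, replace each $\mathcal{H}_n$ by an amplification $\ell^2(H_n)^{\oplus m_n}$ with $m_n\to\infty$ so that $V(\ell^2(G))^\perp$ is itself a direct sum of copies of the regular representation of $G$ and can be absorbed by a homogeneity property of the form $\ell^2(G)\oplus\ell^2(G)\equiv\ell^2(G)$, verified cyclic-subspace by cyclic-subspace in the spirit of Henson's theorem.
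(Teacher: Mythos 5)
Your construction of the approximating ultraproduct is essentially the one the paper uses: you take LEF approximations $H_n$ of $G$, form the finite dimensional dynamical spaces $\ell^2(H_n)$, pass to a non-principal metric ultraproduct, and embed $\ell^2(G)$ equivariantly and isometrically into it. Up to that point you match the paper's proof. The problem is that you then explicitly leave open the decisive step --- showing that $\ell^2(G)$ is \emph{elementarily equivalent} to the ultraproduct rather than merely an equivariant closed substructure of it --- and neither of the two routes you sketch for closing it is carried out. As you yourself note, equivariant containment does not give elementarity here, and the proposed absorption $\ell^2(G)\oplus\ell^2(G)\equiv\ell^2(G)$ applied ``cyclic-subspace by cyclic-subspace'' is not justified: the orthogonal complement of $V(\ell^2(G))$ in the ultraproduct is a nonseparable space with no a priori decomposition into copies of the regular representation, and the homogeneity statement itself would need proof. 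So the proposal has a genuine gap exactly at the point where the real work lies.

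The paper closes this gap differently, and the missing ingredient is worth naming. First, it upgrades the ultraproduct to a $C^*(G)$-module: using the fact that for each $m$ the $(l_m+2m)$-balls of the Cayley graphs of the $G_k$ are eventually isomorphic to the corresponding ball of $G$, one shows $\lim_k \lVert p\rVert_{\ell^2(G_k)} = \lVert p\rVert_{\ell^2(G)}$ for every $p\in\mathbb{C}G$, so that norm-convergent sequences in $\mathbb{C}G$ remain norm-convergent when acting on the ultraproduct. This is strictly stronger than your observation that the moments $\langle w(\bar U)\delta_e,\delta_e\rangle$ converge to the trace; trace convergence alone does not control operator norms. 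Second, with both $\ell^2(G)$ and the ultraproduct realised as $C^*(G)$-modules, the paper invokes Theorem 2.20 of Argoty's paper on the model theory of modules over a $C^*$-algebra: two representations of a $C^*$-algebra are elementarily equivalent iff for each element of the algebra the corresponding ranks are both infinite or finite and equal. This is precisely the multi-operator generalisation of Henson's spectral criterion that your first route wishes for --- it already exists in the literature and does not need to be reproved. If you add the norm-convergence argument and the citation to that rank criterion, your proof becomes the paper's proof.
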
 

Before the proof of the proposition we remind the reader that 
a group $H$ is called LEF \cite{GV} if for every finite 
subset $F\subseteq H$ there is a finite group $S$ containing $F$ 
so that for any $x,y,z \in F$ the equality 
$x\cdot y = z$ holds in $H$ if and only if it holds in $S$. 
Residually finite groups are LEF. 
This proposition together with Theorem 2.15 of \cite{ber} imply that 
when $G$ is an amenable LEF group generated by $t$ elements, 
all  dynamical Hilbert spaces which are existentially closed
$G$-representations are pseudo finite dimensional. 

\bigskip 

\begin{proof} 
Let $G$ be $t$-generated. 
We fix a tuple $g_1 ,\dots ,g_t$ of generators and consider  
$\Gamma_G$, the corresponding coloured Cayley graph of $G$.  
By the condition LEF for every natural $k$ 
there is a finite $t$-generated group $G_k$ 
such that the $k$-ball of $1$ in the Cayley graph of 
$G_k$ coincides with the $k$-ball of $1$ in $\Gamma_G$. 
It is also worth mentioning that for any two elements of 
$\Gamma_{G_k}$ their $k$-balls are naturally isomorphic. 

Let $\hat{H}_G$ be a non-principal metric ultraproduct 
of all dynamical spaces $l^2 (G_k )$. 
Then identifying any word $w(\bar{g})$ from $G$ 
with the sequence of the corrresponding elements 
from all $G_k$ we consider $l^2 (G)$ as a substructure of 
$\hat{H}_G$.  
Moreover each $g_i$ naturally defines a unitary transformation of 
$\hat{H}_G$. 
So the regular representation of $G$ naturally 
extends to a representation on $\hat{H}_G$. 
The ${\bf C}^*$-algebra generated by $G$ in the 
algebra of all bounded operators of $l^2 (G)$ is the operator norm    
closure of the $*$-algebra generated by the regular action 
of $G$, i.e. by $\mathbb{C}G$. 
We denote it by $C^* (G)$. 

It is easy to see that the $*$-algebra $\mathbb{C}G$ 
naturally acts on $\hat{H}_G$. 
Let us note that $C^* (G)$ also has a natural action on 
$\hat{H}_G$. 
Indeed let $p_1 , \ldots , p_i, \ldots$ be a 
sequence from $\mathbb{C} G$ which is  norm convergent. 
For every $i$ let $l_i$ be the maximal length of words 
from $\Gamma_G$ which appear in $p_1 ,\ldots ,p_i$. 
Then for every natural number $m$ there is a natural 
number $n$ such that for all $k>n$ all the $(l_m + 2m)$-balls 
of $1$ in $\Gamma_{G_k}$ are naturally isomorphic. 
In particular for any element $v \in \Gamma_{G_k}$ 
the partial actions of $p_1 ,\ldots , p_m$ 
inside the subspace of $l^2 (G_k )$ supported by   
the $(l_m + 2m)$-ball of $v$ correspond to the actions defined by 
$p_1 ,\ldots , p_m$ in the subspace of $l^2 (G)$ 
supported by the $(l_m + 2m)$-ball of $1$. 
This obviously implies that for any $i$ and $j$ 
$$
\lim_{k\rightarrow \infty} \parallel p_i - p_j \parallel_{l^2 (G_k )}  
= \parallel p_i - p_j \parallel_{l^2 (G)}. 
$$ 
In particular the sequence $p_1 , \ldots , p_i, \ldots$ 
is norm convergent in $\hat{H}_G$. 
As a result we see that $\hat{H}_G$ is a $C^* (G)$-module. 
 
We now apply Theorem 2.20 of \cite{argo}. 
It states that two representations of a ${\bf C}^*$-algebra $\mathcal{A}$ 
are elementarily equivalent in continuous logic if and only if 
for any $a\in \mathcal{A}$ the ranks of the corresponding 
elements are finite and the same or are infinite. 
It is easy to see that the arguments above can be applied for 
a verification that the $C^* (G)$-representations $l^2 (G)$ and $\hat{H}_G$ 
satisfy the conditions of this theorem. 
\end{proof} 
 
We finish this section by the observation  
that in the case of a single operator pseudocompactness 
follows from the spectral decomposition theorem.  

\begin{prop} 
Any dynamical Hilbert space with a single unitary operator is 
pseudo finite dimensional. 
\end{prop}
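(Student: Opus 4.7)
The plan is to invoke Henson's elementary equivalence criterion for dynamical Hilbert spaces with a single operator, stated earlier in Section 2.2: two such spaces are elementarily equivalent in continuous logic iff they have the same essential spectra and the same dimensions of eigenspaces at every point of $S^1$ outside the essential spectrum. So it suffices, for a given $(H,U)$, to exhibit an ultraproduct of finite dimensional dynamical Hilbert spaces whose ultraproduct operator has the same essential spectrum and the same finite-multiplicity isolated eigenvalues as $U$.

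First I would reduce to the separable case via the downward Löwenheim--Skolem theorem for continuous logic, so that $(H,U)$ has a separable elementary substructure, which will be elementarily equivalent to $(H,U)$ itself. Then the spectral theorem for unitary operators yields a decomposition $\sigma(U) = \sigma_e(U) \cup \{r_1, r_2, \ldots\}$, where $\sigma_e(U)$ is a closed subset of $S^1$ and the $r_i$ form an at most countable set of isolated eigenvalues with finite multiplicities $m_i$, whose accumulation points (if any) lie in the closed set $\sigma_e(U)$.

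Next I would construct explicit finite dimensional approximations. For each $n$ choose a finite $1/n$-net $S_n \subseteq \sigma_e(U)$ and integers $k_n \to \infty$, and let $(H_n, U_n)$ be the dynamical Hilbert space of dimension $k_n |S_n| + \sum_{i\le n} m_i$ whose operator has multiplicity $k_n$ at each point of $S_n$ and multiplicity $m_i$ at each $r_i$ with $i\le n$. Fix a non-principal ultrafilter $\mathcal{U}$ on $\mathbb{N}$ and form the metric ultraproduct $(\tilde H,\tilde U) = \prod_{\mathcal{U}}(H_n,U_n)$; the ultraproduct operator is unitary. Then I would check Henson's two conditions using the standard characterization that $r \in \sigma(\tilde U)$ iff $\lim_{n\to\mathcal{U}} d(r,\sigma(U_n)) = 0$ (valid since unitaries are normal, so spectrum equals approximate point spectrum), and that $\dim\ker(\tilde U-r)$ is controlled by the ultralimit of the total multiplicity of $\sigma(U_n)$ in shrinking neighbourhoods of $r$. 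Points $s \in \sigma_e(U)$ pick up infinite multiplicity in $\tilde U$ since $k_n \to \infty$ and $S_n$ is a $1/n$-net of $\sigma_e(U)$, so they lie in $\sigma_e(\tilde U)$; points outside $\sigma_e(U) \cup \{r_i\}$ are bounded away from $\sigma(U_n)$ for large $n$ (using that the $r_i$ accumulate only in $\sigma_e(U)$), so they are not in $\sigma(\tilde U)$ at all; and each $r_i$ admits a neighbourhood in $S^1$ disjoint from $\sigma_e(U)$ and from the other $r_j$, so for $n$ large this neighbourhood meets $\sigma(U_n)$ only at $r_i$ with multiplicity $m_i$, making $r_i$ an isolated eigenvalue of $\tilde U$ of multiplicity exactly $m_i$.

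The main obstacle I expect is the spectral/multiplicity bookkeeping for the ultraproduct, namely verifying rigorously that dimensions of eigenspaces in $\tilde U$ behave as predicted from the data $(\sigma(U_n), \text{multiplicities})$, and in particular that no spurious spectrum appears outside the prescribed regions. The upper and lower bounds on $\dim\ker(\tilde U - r)$ require a careful application of spectral projections: the lower bound by pushing forward orthonormal eigenvectors of $U_n$ whose eigenvalues cluster at $r$ along $\mathcal{U}$, and the upper bound by a uniform gap argument ensuring that any approximate $r$-eigenvector of $\tilde U$ must come from an ultralimit of true eigenvectors whose eigenvalues converge to $r$. Granted this spectral analysis, Henson's criterion finishes the proof.
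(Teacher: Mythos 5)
Your proposal is correct and follows essentially the same route as the paper: the spectral decomposition theorem is used to manufacture finite dimensional approximants whose non-principal metric ultraproduct is then shown to be elementarily equivalent to $(H,U)$ via Henson's criterion, checking equality of essential spectra and of eigenspace dimensions off the essential spectrum exactly as you outline. The only real difference is cosmetic: the paper builds its approximants inside $H$ by truncating the ranges of the spectral projections $E_{s_l}-E_{r_l}$ to dimension $\le n$ and using the step-function operator $\sum_l e^{ikr_l}(E_{s_l}-E_{r_l})$, whereas you construct abstract diagonal unitaries directly from the spectral data $\bigl(\sigma_e(U),\{(r_i,m_i)\}\bigr)$, which if anything makes the multiplicity bookkeeping more transparent.
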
 

This observation was suggested to the author by E. Hrushovski. 
Before the proof we remind the reader the 
{\em spectral decomposition theorem}. 
\begin{quote} 
Let $U$ be a unitary operator. 
Then there is a unique resolution of the identity 
$\{ E_r | r\in [0,2\pi ]\}$ such that 
$U^k = \int^{2\pi}_0 e^{i k t} dE_t$ for all $k\in \mathbb{Z}$. 
\end{quote}  
In this formulation each $E_r$ is a projection operator, 
$E_r = 0$ for $r\le 0$, $E_r = Id$ for $r> 2\pi$, and each 
$E_s - E_r$ is positive for $r<s$. 
The theorem should be interpreted as follows. 
Given $\varepsilon >0$ and $\delta > 0$ 
there is $n_0$ such that for any partition  
$0=r_0 <s_0 = r_1 < s_1 =r_2 < \ldots < s_{n_0} = 2\pi +\varepsilon$ 
with 
$\mathsf{max} \{ s_l - r_l \} < \frac{2 (2\pi + \varepsilon )}{n_0}$ 
we have 
$\parallel U - \sum^{n_0}_{l=1} e^{i k r_l} (E_{s_l} - E_{r_l}) \parallel <\delta$.

\begin{proof} 
Apply the spectral decomposition theorem to 
a dynamical Hilbert space $(H, U)$. 
All operators $E_r$ are self-ajoint, their images are closed subspaces 
and $E_r (H) \subset E_s (H)$ for $r<s$. 
In particular $E_s - E_r$ is an orthogonal projection operator. 
Given $\varepsilon >0$, $\delta >0$ and $n_0 (\varepsilon, \delta )$ 
as above for each natural number $n$ one can define a subspace of $H'<H$ 
of dimension $\le n (n_0 +1)$, where 
$$
\mathsf{dim} E_{r_1} (H') = \mathsf{min} (n, \mathsf{dim} E_{r_1} (H)),   
\ldots ,  
$$ 
$$
\mathsf{dim} (E_{s_l}- E_{r_l})(H') = 
\mathsf{min} (n, \mathsf{dim} (E_{s_l} - E_{r_l}) (H)), \ldots . 
$$ 
Then the formula $\sum^{n_0}_{l=1} e^{i k r_l} (E_{s_l} - E_{r_l})$ 
defines an operator on $H'$. 
We denote it by $U_{\varepsilon, \delta, n}$. 

Take a sequence $(\varepsilon_i , \delta_i , n_i )$ 
such that $\varepsilon_i \rightarrow 0$, $\delta_i \rightarrow 0$
and $n_i \rightarrow \infty$. 
Let $(\hat{H} , \hat{U} )$ be a non-principal metric ultraproduct 
of the corresponding structures $(H', U_{\varepsilon, \delta, n})$. 
In order to prove that $(\hat{H}, \hat{U})$ is elementarily equivalent to 
$(H,U)$ we apply the Henson's theorem (Theorem 3.1 of \cite{argober}) cited in 
Section 2.3 above. 
We also apply some part of the proof of Theorem 3.3 from \cite{argober}. 

Let $\mu \in S^1 \setminus \sigma (U)$ and $\eta = d(\mu, \sigma (U))$. 
Then the statement 
$$
\mathsf{sup}_{u} (\eta \parallel u\parallel \dot{-} \parallel U(u) - \mu u\parallel ) \le 0
$$ 
holds in $(H,U)$ and so holds its $2\delta$-approximation in  
$(H', U_{\varepsilon ,\delta ,n})$ as above. 
Now it is easy to see that the exact statement holds in 
$(\hat{H}, \hat{U})$. 
Therefore $\sigma (\hat{U} ) \subseteq \sigma (U)$. 
The same argument proves $\sigma (U ) \subseteq \sigma (\hat{U})$.

For each $\lambda \in \sigma (U)$ let us consider statements of the form  
$$
\mathsf{inf}_{u_1} \ldots \mathsf{inf}_{u_m} \mathsf{max}_{i,j} (|\langle u_i ,u_j\rangle |, |\parallel u_i \parallel - 1|, |U(u_i ) - \lambda u_i |) \le 0 . 
$$ 
Repeating the argument above we see that they 
do not distinguish $(H,U)$ and $(\hat{H}, \hat{U})$. 
When $\lambda$ is an isolated point in $\sigma (U)$ 
this implies that the dimensions of 
$\{ x\in H: U (x) = \lambda x \}$ 
and $\{ x\in \hat{H}: \hat{U} (x) = \lambda x \}$ 
are the same. 
Thus the conditions of the Henson's theorem are satisfied. 
\end{proof}

\section{Dynamical $n$-qubit spaces}

In this section we demonstrate how the method 
of interpretability works in some expansions of dynamical spaces. 
In paragraphs (A) - (C) below we describe why these expansions are natural 
from the point of view of quantum computations. 

\subsection{Preliminaries} 

\paragraph{(A)} 
We remind the reader that states of quantum systems 
are represented by normed vectors of tensor products 
$$ 
(...(\mathcal{B}_1 \bigotimes \mathcal{B}_2 )\bigotimes .... )\bigotimes \mathcal{B}_k ,
$$ 
$$
\mbox{  where } \mathcal{B}_i \cong \mathbb{C}\bigoplus \mathbb{C} 
\mbox{ under isomorphisms of Hilbert spaces, } i\le k. 
$$ 
In Dirac's notation elements of $\mathcal{B}_i$ 
are denoted by $|h\rangle$ and 
tensors 
$$ 
(...(|h_1 \rangle \otimes |h_2 \rangle ) ...) \otimes |h_k \rangle 
\mbox{ are denoted by } 
|h_1 h_2 ...h_k \rangle . 
$$      
Any normed $h\in \mathcal{B}_i$ is called a {\em qubit};  
it is a linear combination of  
$|0\rangle = (1,0)$ and $|1\rangle =(0,1)$.  

The probability amplitude $a (\phi \rightarrow \psi)$ 
is defined as the inner product 
$\langle \psi |\phi\rangle$   
and the probability $p(\phi \rightarrow \psi)$ 
is $|a(\phi \rightarrow \psi)|^2$. 
Dynamical evolutions of the quantum system are 
represented by unitary operators on  
$\mathcal{B}^{\otimes k}$.

\bigskip 

\paragraph{(B)} 
It is worth noting that  continuous logic can be considered as 
a theory in some extension ( {\bf RPL}$\forall$ ) 
of {\L}ukasiewicz logic (see \cite{DGP}). 
The latter is traditionally linked with quantum mechanics, 
\cite{CCGP}, \cite{pykacz}. 
Thus the idea that continuous logic should enter 
into the field is quite natural.

We will consider dynamical $n$-qubit spaces in 
continuous logic as follows. 
Firstly we extend structures of complex Hilbert spaces by 
additional discrete sort $Q$ with $\{ 0,1\}$-metric  
and a map $qu: Q\rightarrow B_1$ so that 
the set $qu(Q)$ is an orthonormal basis of $\mathbb{H}$. 

When $Q$ consists of $2^n$ elements we may denote them 
by $|i_0 ...i_{n-1} \rangle$ with $i_j \in \{ 0,1\}$. 
In Quantum Computations this set is called 
the {\em computational basis of the system} and  
$\mathcal{B}^{\otimes n}$ is called the $n$-{\em qubit space}. 
Secondly we enrich the structure $(qu , Q, \mathcal{B}^{\otimes n})$ 
by unitary operators $U_1 ,...,U_t$.  
We call it a {\em dynamical $n$-qubit space}. 
It turns out that the condition $|Q| = 2^n$ 
is not essential. 
For example one can consider subspaces generated by arbitrary subsets 
of the computational  basis. 
Therefore we will consider {\em marked Hilbert spaces} and {\em marked dynamical Hilbert spaces}, 
i.e. (dynamical) Hilbert spaces expanded by a discrete sort $Q$ 
and a map $qu$ which injectively maps $Q$ onto an orthonormal basis of the space.  
In this section we study the following problem.  
\begin{quote}  
{\em Describe classes of marked dynamical Hilbert spaces 
having decidable continuous theory. }
\end{quote} 
Below we give examples of classes of 
marked dynamical spaces 
with undecidable sets of $0$-statements. 
This material is based on the method of interpretability 
described in the second part of Section 3. 
Comparing these results with Section 4  
it is worth noting that in fact 
(following the approach of Quantum Computations) 
we extend the language used in Section 4 by 
a discrete unary predicate $Q$. 
We will see that this procedure is essential 
for the expressive power of the language:   
there are natural subclasses of marked dynamical 
Hilbert spaces where undecidable first order theories of 
some classes of finite structures 
can be interpreted on $Q$.

\bigskip

\paragraph{(C)} 
It is worth noting that a dynamical $n$-qubit space 
defines a family of quantum automata over the language 
$\{ 1,...,t\}^{*}$, where each automaton 
is determined by the $2^n$-dimensional  
diagonal matrix $P$ of the projection to final states.   
Fixing  $\lambda\in \mathbb{Q}$ we say that 
a word $w=i_1 ...i_k$ is {\em accepted} 
by the corresponding $P$-automaton if 
$$
ACC_w = \parallel P U_{i_k} ... U_{i_1} |0^{\otimes n}\rangle \parallel^2 >\lambda . 
$$   
These issues are described in \cite{gruska}, \cite{MC} and \cite{DJK}. 
The corresponding algorithmic problems 
were in particular studied in the paper of 
H. Derksen, E. Jeandel, P. Koiran  \cite{DJK}.  
They have proved that the following problems are decidable for 
$U_1 ,...,U_t$  over finite extensions of $\mathbb{Q}[i]$: \\ 
(i) Is there $w$ such that $ACC_w >\lambda$? \\ 
(ii) Is a threshold $\lambda$ isolated, i.e. is there $\varepsilon$ 
that for all $w$,  $|ACC_w - \lambda |\ge \varepsilon$ ? \\ 
(iii) Is there a threshold $\lambda$ which is isolated? 

The observation that  given $P$ each statement 
$ACC_w \le \lambda$  or $|ACC_w - \lambda |\ge \varepsilon$
can be rewritten as a continuous statement 
of the theory of dynamical $n$-qubit spaces 
partially motivated our research in this paper. 

\subsection{Interpretations} 

We start this section with an undecidability 
result of some classes of constant expansions of marked  Hilbert  spaces. 
Then we apply the idea of the proof 
to a more interesting example of a class of 
marked dynamical spaces. 
In Remark \ref{naturality} we comment how these classes are natural. 

\begin{thrm}  \label{constants} 
There is a class of marked Hilbert spaces expanded by four constants, 
i.e. structures of the form
$$  
(Q, qu , \{ B_n\}_{n\in \omega} ,0,\{ I_{mn} \}_{m<n} ,
\{ \lambda_c \}_{c\in\mathbb{Q}[i]}, +,-,\langle \rangle_{Re} , \langle \rangle_{Im}, a_1 ,a_2 , b_1 ,b_2  )  , 
$$
which is distinguished in the class of all marked Hilbert spaces 
with $\langle b_1 ,b_2 \rangle \not= 0$ by a single continuous 
statement and which has undecidable set of all $0$-statements. 
\end{thrm}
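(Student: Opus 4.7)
The plan is to invoke Theorem \ref{unde}. Take $\mathcal{K}_0$ to be the class of finite symmetric irreflexive graphs in the language with one binary relation $E$; its first-order theory is undecidable by Trakhtenbrot's theorem. The sought class $\mathcal{K}$ will consist of marked Hilbert spaces with four additional constants $a_1, a_2, b_1, b_2$. Because these constants are already named in the signature, the parameter expansion $\mathcal{K}(\bar a)$ of Theorem \ref{unde} coincides with $\mathcal{K}$ itself, and it suffices to exhibit an appropriate relative interpretation of $\mathcal{K}_0$ in $\mathcal{K}$.

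The single distinguishing continuous statement $\sigma = 0$ is engineered so that, in any model, the four constants encode a finite graph on a subset of the computational basis $qu(Q)$. Concretely, $\sigma$ forces each of $a_1, a_2$ to be a $\{0, 1\}$-linear combination of basis vectors --- expressible by clauses of the form $\mathsf{sup}_{q \in Q}(\langle qu(q), a_i \rangle \cdot |1 - \langle qu(q), a_i \rangle|) \le 0$ --- and identifies the vertex set $V$ of the interpreted graph with the common support of $a_1$ and $a_2$. Further clauses of $\sigma$ force the supports of $b_1, b_2$ to lie in $V$ and force their amplitudes to label edges unambiguously: an ordered pair $(q, q') \in V \times V$ is declared an edge iff $\langle qu(q), b_1 \rangle$ and $\langle qu(q'), b_2 \rangle$ are both nonzero and equal. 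The interpreted edge relation is the symmetrization of this, and the given side condition $\langle b_1, b_2 \rangle \ne 0$ excludes only a degenerate edgeless case.

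Given $\sigma$, the interpretation data required by Theorem \ref{unde} reads off directly on the discrete sort $Q$: $\phi^-(x) \le 0$ asserts that $qu(x)$ lies in $V$, $\theta^-(x, y) \le 0$ is discrete equality on $Q$, and $\psi_E^-(x, y) \le 0$ is the symmetrized amplitude-matching condition, with the corresponding $(+)$-formulas obtained as their continuous complements. Since $Q$ is discrete and the atomic expressions involved are $\{0, 1\}$-valued, clauses (ii)--(iii) of the interpretability definition hold automatically. Clause (iv) is verified by exhibiting, for each finite graph $(V, E)$, a marked finite-dimensional Hilbert space in which the four constants are assembled to realize $V$ and $E$, with distinct positive amplitudes labelling distinct edges. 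Theorem \ref{unde} then yields that the set of $0$-statements of $\mathcal{K}$ is not computable.

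The main obstacle will be packaging into a \emph{single} continuous statement the requirement that the amplitudes in $b_1, b_2$ really do label edges unambiguously. Continuous logic cannot directly express membership in an infinite discrete set, so the uniqueness of the labelling has to be routed through the discrete sort $Q$: for each pair $q \ne q'$ one uses the discrete metric $d(q, q')$ to activate a clause demanding that the two coefficients of $b_1$ at $q$ and $q'$ either both vanish or are noticeably distinct, and similarly for $b_2$. Once this bookkeeping is in place, Trakhtenbrot undecidability lifts via Theorem \ref{unde} to the claimed non-computability.
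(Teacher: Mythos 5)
Your high-level strategy is the same as the paper's: apply Theorem \ref{unde} to interpret an undecidable class of finite first-order structures on the discrete sort $Q$, with the four constants encoding the relations through their inner products with the basis vectors $qu(q)$. But your concrete encoding of the edge relation does not satisfy clause (iv) of the interpretability definition. In your scheme each vertex $q$ carries exactly two complex amplitudes, $\langle qu(q),b_1\rangle$ and $\langle qu(q),b_2\rangle$, and the interpreted edge relation depends only on the coincidence pattern among these $2n$ amplitudes. There are at most $B_{2n}\le (2n)^{2n}$ such coincidence patterns, whereas there are $2^{\binom{n}{2}}$ graphs on $n$ vertices; so for large $n$ almost no graph is realizable, and ``any structure of $\mathcal{K}_0$ can be so realized'' fails. (Small explicit failures also exist: with distinct labels per edge, as you propose, each vertex can be the source of at most one and the target of at most one labelled arc, so only disjoint unions of paths and cycles arise.) This is exactly why the paper interprets the class of finite structures with \emph{two equivalence relations} rather than graphs: an equivalence relation on $n$ points \emph{is} a coincidence pattern of $n$ labels, so ``$q_j\sim q_k$ iff the coefficients of $qu(q_j)$ and $qu(q_k)$ in $a_i$ are equal'' realizes every such structure with a single vector $a_i$. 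A side problem of the same kind: forcing the coefficients of $a_i$ to lie in $\{0,1\}$ collides with the norm bound of whatever ball $B_n$ the constant $a_i$ inhabits, so arbitrarily large vertex sets could not be encoded that way either.

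There is a second gap in your claim that clauses (ii)--(iii) ``hold automatically'' because the atomic expressions are $\{0,1\}$-valued. They are not: a formula such as $|\langle qu(q),b_1\rangle-\langle qu(q'),b_2\rangle|$ takes arbitrary values in $[0,2]$, and Theorem \ref{unde} requires for each $\psi^-$ a complementary $\psi^+$ with $\psi^-\le 0$ equivalent to $\psi^+>0$; concretely you need the positive values of $\psi^-$ to be bounded below by a \emph{definable} positive quantity so that $\psi^+$ can be written as (threshold) $\dot{-}$ $\psi^-$. Your closing remark that the coefficients should be ``noticeably distinct'' is precisely this missing ingredient left unquantified. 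The paper manufactures the threshold as $|\langle b_1,b_2\rangle|$ and axiomatizes, by the single distinguishing statement, that every value of $|\langle qu(y_1),a_i\rangle-\langle qu(y_2),a_i\rangle|$ is either $0$ or at least $|\langle b_1,b_2\rangle|$; that is the entire purpose of the hypothesis $\langle b_1,b_2\rangle\neq 0$ in the statement of Theorem \ref{constants}, not the exclusion of a ``degenerate edgeless case.'' Without such a definable gap the $(+)$-formulas, and hence the translation of negations in Theorem \ref{unde}, cannot be produced.
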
 

\begin{proof} 
Consider the following formula: 
$$ 
\psi (x ,y_1 ,y_2 ) = |\langle qu(y_1 ),x\rangle - \langle qu(y_2 ),x \rangle | ,  
$$ 
where $y_1 ,y_2$ are variables of the sort $Q$ and $x$ is of the sort $B_1$. 

In any marked Hilbert  space any equivalence relation on $Q$ can be realised by 
$\psi (a , y_1 ,y_2 )\le 0$ for appropriately chosen $a\in B_1$:   
define $a$ to be a linear combination of $qu (q_l )$, so that for 
equivalent $q_j$ and $q_k$ the coefficients of $qu (q_j )$ and $qu (q_k )$ 
in $a$ are the same. 

Let us introduce the following formula: 
$$ 
\psi^c (x , z_1 ,z_2 ,y_1 ,y_2 ) = 
| \langle z_1 ,z_2 \rangle |\dot{-} |\langle qu(y_1 ),x\rangle - \langle qu(y_2 ),x \rangle | , 
$$ 
where $y_1 ,y_2$ are variables of the sort $Q$ and $x,z_1 ,z_2$ are of $B_1$.

If $a$ defines an equivalence relation on $Q$ as above 
then there are $b_1 ,b_2 \in B_1$ with sufficiently small 
$|\langle b_1 ,b_2 \rangle |\not=0$ 
(in fact here we only need 
$$
|\langle b_1 ,b_2 \rangle | < \mathsf{min} (|\langle qu(y_1 ),a\rangle - \langle qu(y_2 ),a \rangle | : y_1 \mbox{ and } y_2 \mbox{ are not equivalent } \mbox { ) }  
$$
and satisfying 
$$ 
\mathsf{sup}_{y_1 ,y_2} \mathsf{min}(\psi (a, y_1 ,y_2 ), \psi^c (a,b_1 ,b_2 , y_1 , y_2 ))\le 0
$$ 
$$
\mathsf{sup}_{y_1 ,y_2} (| \langle b_1 ,b_2 \rangle |\dot{-} (\psi (a, y_1 ,y_2 )+ \psi^c (a,b_1 ,b_2 , y_1 , y_2 )))\le 0.
$$
We see that the formula 
$\psi^c (a,b_1 ,b_2, y_1 , y_2 )$ can be 
interpreted as the complement of the equivalence relation 
defined by $\psi (a, y_1 , y_2 )$ in the class of these marked spaces. 

This allows us to define interpretability of the first-order theory of 
finite structures of two equivalence relations  
(which is undecidable by Proposition 5.1.7 from \cite{ershov}) 
in the class, say $\mathcal{K}$, of marked Hilbert spaces extended by constants 
$a_1 ,a_2 ,b_1 ,b_2$ where $b_1 ,b_2$ satisfy the statements 
above for both $a_1$ and $a_2$ instead of $a$. 

In fact we axiomatize $\mathcal{K}$ in the class of all marked Hilbert spaces 
with $\langle b_1 , b_2 \rangle \not= 0$ by the following continuous statements: 
$$ 
\mathsf{sup}_{y_1 ,y_2} \mathsf{min}(\psi (a_i , y_1 ,y_2 ), 
| \langle b_1 ,b_2 \rangle |\dot{-} |\langle qu(y_1 ),a_i \rangle - \langle qu(y_2 ),a_i \rangle |) \le 0 
\mbox{ , where } i=1,2.  
$$ 

In terms of  Theorem \ref{unde} the formulas $\phi^+ , \phi^- , \theta^+ , \theta^-$ 
become degenerate: $\phi^-$ can be taken as $d(y,y)$ for 
the (descrete) sort $Q$, then 
$$
\phi^+ (y) = 1 \dot{-} d(y,y) \mbox{ , } 
\theta^- (y_1 ,y_2 ) = d(y_1 , y_2 ) \mbox{ , } 
\theta^+ (y_1 ,y_2 ) = 1 - d(y_1 ,y_2 ). 
$$ 
Formulas   $\psi (a_1 , y_1 , y_2 )$ and $\psi^c (a_1 ,b_1 ,b_2, y_1 , y_2 )$ 
play the role of $\psi^{-}_1$  and $\psi^+_1$. 
Then $\psi (a_2 , y_1 , y_2 )$ and $\psi^c (a_2 ,b_1 ,b_2, y_1 , y_2 )$
play the role of  $\psi^{-}_2$  and $\psi^+_2$. 

To each formula $\rho (\bar{y})$ of the theory of 
two equivalence relations so that the quantifier-free part 
is in the disjunctive normal form we associate 
the appropriately rewritten continuous formula 
$\rho^* (a_1 ,a_2 ,b_1 ,b_2 , \bar{y})$ 
(where we use $\mathsf{min}$ and $\mathsf{max}$ 
instead of $\vee$ and $\wedge$, 
and we exchange $+$ and $-$ when $\neg$ appears). 
Since the free variables of the latter, $\bar{y}$, 
are of the sort $Q$, when $\rho$ is quantifier-free  
the values $\rho^* (a_1 ,a_2 ,b_1 ,b_2 , \bar{c})$ belong to 
$\{ 0\} \cup [ | \langle b_1 ,b_2 \rangle |, 1]$.  
It is easy to see that in structures of $\mathcal{K}$ 
the same property holds for any 
formula $\rho (\bar{y})$. 

This obviously implies that when 
$\rho$ is a sentence,  the sentence 
$$ 
\mathsf{min} (| \langle b_1 ,b_2 \rangle |,\rho^*(a_1 ,a_2 ,b_1 ,b_2 ))
$$   
has the following property:  
\begin{quote}  
$\rho$ is satisfied in all finite models of two equivalence relations 
if and only if all structures of $\mathcal{K}$  satisfy 
$\mathsf{min} (| \langle b_1 ,b_2 \rangle |,\rho^*(\bar{a},\bar{b})) =0$.  
\end{quote} 
By Theorem \ref{unde} this gives a required reduction. 
\end{proof} 

\begin{remark} \label{naturality} 
The class of marked Hilbert spaces considered in Theorem \ref{constants} 
is the intersection of an axiomatizable class of continuous structures 
with the complement of an axiomatizable class (defined by 
the inequality $\langle b_1 , b_2 \rangle \not=0$). 
Both classes of this intersection are natural and easily defined.  
It is also clear that the axiomatizable closure of this class  
still has undecidable set of $0$-statements. 
On the other hand we do not have any reasonable description of 
the members of this closure. 
The following theorem concerns a very similar situation 
(in a different language).  
The author would very like to find an easily described 
axiomatizable class of marked dynamical Hilbert spaces 
with undecidable theory 
(or at least with undecidable set of $0$-statements). 
\end{remark}

\begin{thrm} 
There is a class of marked dynamical Hilbert spaces  
in the signature 
$$  
(Q, qu , \{ B_n\}_{n\in \omega} ,0,\{ I_{mn} \}_{m<n} ,
\{ \lambda_c \}_{c\in\mathbb{Q}[i]}, +,-,\langle \rangle_{Re} , 
\langle \rangle_{Im}, U_1 ,U_2 , U_3 ,U_4 ,U_5  ) ,   
$$
which is distinguished in the class of all marked dynamical Hilbert spaces with 
$$
\mathsf{sup}_v d( U_3 (v) ,v )\not= 0
$$ 
by a single continuous statement and the set of $0$-statements 
of the continuous theory of which is not computable. 
\end{thrm}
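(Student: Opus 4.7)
\emph{Proof plan.}

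The plan is to mimic Theorem~\ref{constants}, using values extracted from $U_1,U_2,U_4,U_5$ in place of the four constants $a_1,a_2,b_1,b_2$ and letting $U_3$ supply the positive ``gap'' $\delta:=\mathsf{sup}_{v\in B_1} d(U_3(v),v)\ne 0$ that played the role of $|\langle b_1,b_2\rangle|$. As in Theorem~\ref{constants} the objective is to interpret the (undecidable, by Proposition~5.1.7 of~\cite{ershov}) first-order theory of finite structures with two equivalence relations into the continuous theory of the class, so that Theorem~\ref{unde} gives non-computability of the set of $0$-statements.

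First I would fix, for $i=1,2$ and $y_1,y_2$ of sort $Q$, formulas
\[
\phi_i(y_1,y_2):=\mathsf{max}\bigl(|\langle U_i qu(y_1),qu(y_1)\rangle_{Re}-\langle U_i qu(y_2),qu(y_2)\rangle_{Re}|,\,|\langle U_i qu(y_1),qu(y_1)\rangle_{Im}-\langle U_i qu(y_2),qu(y_2)\rangle_{Im}|\bigr),
\]
whose zero set on $Q$ is automatically an equivalence relation (it is the pullback of equality of complex numbers along $y\mapsto\langle U_i qu(y),qu(y)\rangle$). The class is then axiomatised, as in Theorem~\ref{constants}, by the single continuous statement obtained as the $\mathsf{max}$ of the $\le 0$-clauses
\[
\mathsf{sup}_{y_1,y_2}\mathsf{min}\bigl(\phi_i(y_1,y_2),\,\delta\dot{-}\phi_i(y_1,y_2)\bigr)\le 0
\]
and
\[
\mathsf{sup}_{y_1,y_2}\bigl(\delta\dot{-}(\phi_i(y_1,y_2)+(\delta\dot{-}\phi_i(y_1,y_2)))\bigr)\le 0
\]
for $i=1,2$, together with the standard unitarity axioms. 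Exactly as in Theorem~\ref{constants} these clauses force $\phi_i$ to take values only in $\{0\}\cup[\delta,\infty)$, so its zero set is a genuine equivalence relation with a uniform gap. Realisability of every pair of equivalence relations on every finite $Q$ is obtained by taking $U_1,U_2$ diagonal in the basis $qu(Q)$ with phases chosen so that distinct values differ by at least $\delta$ in both real and imaginary parts, choosing $U_3$ so that $\mathsf{sup}_v d(U_3(v),v)$ equals that gap, and using $U_4,U_5$ auxiliarily to pin down a uniform $\delta$ across the class.

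The interpretation step then transfers essentially verbatim from the proof of Theorem~\ref{constants}. To each first-order formula $\rho(\bar y)$ in the language of two equivalence relations, with its quantifier-free part in disjunctive normal form, I would associate a continuous formula $\rho^*$ by translating atomic equivalences to $\phi_i$, negations of equivalences to $\delta\dot{-}\phi_i$, the connectives $\wedge,\vee$ to $\mathsf{max},\mathsf{min}$, and the first-order quantifiers $\forall,\exists$ on $Q$ to $\mathsf{sup},\mathsf{inf}$. Discreteness of $Q$ combined with the $\{0,\ge\delta\}$-dichotomy for $\phi_i$ forces all values of $\rho^*$ in members of our class to lie in $\{0\}\cup[\delta,\ldots]$, so that $\rho$ is satisfied in all finite models of two equivalence relations iff the $0$-statement $\mathsf{min}(\delta,\rho^*)=0$ belongs to the continuous theory of the class. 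Via Theorem~\ref{unde} this reduction shows that the set of $0$-statements of the continuous theory cannot be computable.

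The main obstacle I anticipate is in the middle step: packaging everything into a single continuous statement and verifying that it enforces the $\{0\}\cup[\delta,\infty)$-dichotomy for both $\phi_1$ and $\phi_2$ simultaneously when $\delta$ is itself the sub-formula $\mathsf{sup}_v d(U_3(v),v)$ rather than a bona fide constant, while still leaving enough freedom to realise every pair of finite equivalence relations by some tuple $(U_1,\ldots,U_5)$. In Theorem~\ref{constants} the constants $b_1,b_2$ could be chosen independently of $a_1,a_2$; in the unitary version the analogous independence of the gap from the encoding operators must be built in, which is presumably the role of the extra operators $U_4,U_5$. Once this calibration is made, the interpretability step is largely syntactic and transfers from the proof of Theorem~\ref{constants}.
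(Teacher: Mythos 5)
Your proposal is correct in its overall architecture and that architecture is the paper's: interpret the undecidable first-order theory of finite structures with two equivalence relations on the discrete sort $Q$, let the sentence $\delta:=\mathsf{sup}_v d(U_3(v),v)$ play the role that $|\langle b_1,b_2\rangle|$ played in Theorem \ref{constants}, enforce the $\{0\}\cup[\delta,\infty)$ dichotomy by a single axiom, and conclude via Theorem \ref{unde}. Where you genuinely diverge is in how the equivalence relations are encoded. The paper keeps the vector-based encoding of Theorem \ref{constants}: the relation is carried by a unit vector $a_i$ whose coefficients agree on equivalent basis elements, and since $a_i$ cannot be named by a constant it is recovered as the (essentially unique, up to phase) unit vector fixed by $U_i$; consequently the paper's $\psi_i(y_1,y_2)$ carries an extra $\mathsf{sup}_u$ over approximate fixed vectors, $U_4$ supplies a tolerance for the resulting inexactness of $|\langle qu(y_1),u\rangle-\langle qu(y_2),u\rangle|$, $U_5$ supplies the threshold $r$ for the complement formula, and $U_3$ must be calibrated close to the identity so that every $u$ passing the fixed-vector test is near enough to $a_i$ for the dichotomy (and transitivity of the zero set) to survive. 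Your encoding by the diagonal entries $\langle U_i\,qu(y),qu(y)\rangle$ is exact: the zero set of $\phi_i$ is an equivalence relation for free, no $\mathsf{sup}_u$ and no tolerance operator are needed, the complement can be taken literally as $\delta\dot{-}\phi_i$ (which also makes your second axiom clause automatically true, hence redundant), and $U_4,U_5$ could simply be the identity. The calibration difficulty you flag at the end does not in fact arise in your version, precisely because the dichotomy is imposed as an axiom of the class rather than derived from properties of approximate fixed vectors; realizability only requires choosing the diagonal phases of $U_1,U_2$ to be $\delta$-separated (in the $\mathsf{max}$ of real and imaginary parts, which is all your $\phi_i$ measures) for $\delta$ small enough relative to the number of classes, and then choosing $U_3$ to realize exactly that $\delta$. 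In short: same reduction, same use of $U_3$ as the gap, but a simpler and more robust interpretation of the two relations; what the paper's heavier construction buys is only fidelity to the constant-based formulas of Theorem \ref{constants}.
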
 

\begin{proof} 
We will use the construction of Theorem \ref{constants} 
with some necessary changes. 
For example we replace the value  
$| \langle b_1 ,b_2 \rangle |$ from  
that theorem  by  $\mathsf{sup}_{v} d(U_3 (v ),v )$. 
The  constants $a_1$ and $a_2$  will appear as 
the normed vectors fixed by $U_1$ and $U_2$ 
respectively. 
Although we choose $U_i$, $i=1,2$, so that 
the subspace of fixed vectors of $U_i$ 
coincides with $\mathbb{C} a_i$, 
we cannot define these constants by a continuous formula. 
This is why some additional values will be 
used in the proof.  
The values  $\mathsf{sup}_{v} d(U_3 (v ),v )$  
and $\mathsf{sup}_{v} d(U_4 (v ),v )$ will appear 
in 'fuzzy' versions of formulas from 
the proof of Theorem \ref{constants}. 

Let: 
$$ 
\psi_i (y_1 ,y_2 ) = \mathsf{sup}_{u} \mathsf{min} ( 
\mathsf{sup}_{v_1}(d(U_{3} (v_1 ),v_1 )) \dot{-} \mathsf{max} (d(U_i (u ),u ),|1-\parallel u \parallel |),
$$ 
$$    
(|\langle qu(y_1 ),u\rangle - \langle qu(y_2 ),u \rangle | \dot{-} 
\mathsf{sup}_{v_2}d(U_4 (v_2 ),v_2 ))),  
$$ 
where $y_1 ,y_2$ are variables of the sort $Q$, $i\in \{ 1,2\}$  
and $u , v_1 ,v_2$ are of the sort $B_1$. 

To see that {\em in any marked dynamical Hilbert space any equivalence 
relation on $Q$ can be realized by} $\psi_1 (y_1 ,y_2 )\le 0$ 
let us define $a_1$ to be a linear combination of 
$qu (q_l )$ of length 1, so that for equivalent 
$q_j$ and $q_k$ the coefficients of $qu (q_j )$ 
and $qu (q_k )$ in $a_1$ are the same 
(the case of $\psi_2$ and $a_2$ is similar). 
We also fix a rational number $r$ (for both $a_1$ and $a_2$)  
so that for non-equivalent $q_j$ and $q_k$ the coefficients 
of $qu (q_j )$ and $qu (q_k )$ in $a_1$ 
are distant by $> r$.   
Note that any $e^{i\phi} a_1$ has the same properties 
as $a_1$ with respect to elements of $qu(Q)$. 

We extend $a_1$ to an orthonormal basis of 
the space and define $U_1$ to be a unitary 
operator having the vectors of the basis as eigenvectors 
so that $\mathbb{C} a_1$ is the subspace of fixed points. 
The remaining eigenvalues are chosen in the form $e^{i \varphi}$ 
so that the corresponding eigenvectors are taken by $U_1$ 
at the distance  $\ge 1/10$ (i.e.$|1 - e^{i \varphi}|\ge 1/10$). 

We will assume that $\mathsf{sup}_{v\in B_1}d(U_{3} (v),v )>0$ 
in our structures. 
Choosing $U_4$ we demand that 
$\mathsf{sup}_{v\in B_1}(d(U_4 (v ),v ))$ 
is much less than $r$. 
It follows  that when $q_i$ and $q_j$ are not equivalent  
the value $u=a_1$ realizes 
the inequality $\psi_1 (q_i , q_j )>0$. 

Having $U_4$ we construct $U_3$  
so close to $Id$ (with respect to the operator norm) 
that the statement $\psi_1 (y_1 , y_2 )\le 0$ indeed realizes 
the equivalence relation we consider.  
For this we only need the condition that 
if $q_i$ and $q_j$ are equivalent and a vector $c$ satisfies   
$$    
|\langle qu(q_i ),c\rangle - \langle qu(q_j ),c \rangle | >  
\mathsf{sup}_{v\in B_1}d(U_4 (v ),v ))
\mbox{ and } |1-\parallel c \parallel |< \mathsf{sup}_{v\in B_1}(d(U_{3} (v ),v ))    
$$ 
i.e. the projection of  $c$ to $qu(q_i )-qu(q_j )$ 
and the length of $c$ are  sufficiently large
(i.e. $c$ is sufficiently distant from  $a_1$), 
then  $\mathsf{sup}_{v\in B_1}(d(U_{3} (v ),v )) \le d(U_1 (c), c) $.

Let us now introduce $U_5$ with 
$r =  \mathsf{sup}_{v}(d(U_{5} (v ),v)$ 
and consider the following formulas for $i = 1,2$: 
$$ 
\psi^c_i (y_1 ,y_2 ) =  \mathsf{sup}_{u} \mathsf{min} ( 
\mathsf{sup}_{v_1}(d(U_{3} (v_1 ),v_1 )) \dot{-} \mathsf{max} (d(U_i (u ),u ),|1-\parallel u \parallel |),
$$ 
$$ 
(\mathsf{sup}_{v_2}d (U_5 (v_2 ),v_2 )\dot{-} |\langle qu(y_1 ),u\rangle - \langle qu(y_2 ),u \rangle | ) ), 
$$ 
where $y_1 ,y_2$ are variables of the sort $Q$ and $u,v_1 ,v_2$ are of $B_1$.
If necessary we may correct $U_3$ making $\mathsf{sup}_{v}d(U_{3} (v),v)$ smaller  
so that the following statement holds. 
$$ 
\mathsf{sup}_{y_1 ,y_2} \mathsf{min}(\psi_i (y_1 ,y_2 ), \psi^c_i ( y_1 , y_2 ))\le 0 ,
$$ 
Verifying this one can apply the argument of the previous paragraph. 
Similar reasoning implies that 
$$
\mathsf{sup}_{y_1 ,y_2} ( \mathsf{sup}_{v}d(U_{3} (v),v )\dot{-} (\psi_i ( y_1 ,y_2 )+ \psi^c_i ( y_1 , y_2 )))\le 0.
$$
As before the formula $\psi^c_i (y_1 , y_2 )$ will be 
interpreted as the complement of the equivalence relation 
defined by $\psi_i (y_1 , y_2 )$ in the class of these qubit spaces. 

This allows us to define interpretability of the (undecidable) 
first-order theory of finite structures of two equivalence relations in 
the class, say $\mathcal{K}$, of marked dynamical spaces with respect to 
operators  $U_1 ,U_2 ,U_3 ,U_4 , U_5$. 
The formulas 
$\phi^+ , \phi^- , \theta^+ , \theta^-$ 
(see  Theorem \ref{unde} ) are taken as in 
Theorem \ref{constants} (i.e. $\phi^- (y) = d(y,y)$  
and $\theta^- (y_1 ,y_2 ) = d(y_1 , y_2 )$).  
Formulas   $\psi_i (y_1 , y_2 )$ and $\psi^c _i ( y_1 , y_2 )$ 
play the role of $\psi^{-}_i$  and $\psi^+_i$ for  $i=1, 2$. 

To each formula $\rho (\bar{y})$ of the theory of two equivalence relations 
so that the quantifier-free part is in the disjunctive normal form we associate 
the appropriately rewritten continuous formula 
$\rho^* ( \bar{y})$. 
Since the free variables of the latter $\bar{y}$ are of the sort $Q$,  
when $\rho$ is quantifier-free,  the values 
$\rho^* ( \bar{c})$ belong to 
$\{ 0\} \cup [ \mathsf{sup}_{v}d(U_{3} (v),v ), 1]$.  
Thus we see that in structures of $\mathcal{K}$ 
the same property holds for any 
formula $\rho (\bar{y})$. 

This obviously implies that when 
$\rho$ is a sentence,  the sentence 
$$ 
\mathsf{min} (\mathsf{sup}_{v}d(U_{3} (v),v ),\rho^*)
$$   
has the following property:  
\begin{quote}  
$\rho$ is satisfied in all finite models of two equivalence relations 
if and only if all structures of $\mathcal{K}$  satisfy 
$\mathsf{min} (\mathsf{sup}_{v}d(U_{3} (v),v ),\rho^*) =0$.  
\end{quote} 
This finishes the proof. 
\end{proof}


\begin{thebibliography}{99} 
\bibitem{argo} C. Argoty, {The model theory of modules of a $\mathbf{C}^*$-algebras.}  Arch. Math. Logic {\bf 52}(2013), 525--541. 

\bibitem{argober} C. Argoty and A. Berenstein, 
{Hilbert spaces expanded by a unitary operator.} Math. Log. Quaterly 
{\bf 55}(2009), no. 1, 37--50. 

\bibitem{baur}  W. Baur, {Undecidability of the theory of abelian 
groups with a subgroup.} Proc. Amer. Math. Soc. {\bf 55}(1976), no. 1, 
125--128.

\bibitem{BHV} B. Bekka, P. de la Harpe, A. Valette, 
\emph{Kazhdan's property (T).}
New Mathematical Monographs, 11. Cambridge University Press, Cambridge, 
2008. 

\bibitem{BYBHU} I. Ben Yaacov, A. Berenstein, W. Henson and A. Usvyatsov, 
{\em Model theory for metric structures}. In: Model theory with 
Applications to Algebra and Analysis, v.2 (Z. Chatzidakis, H.D. Macpherson, 
A.Pillay and A.Wilkie, eds.), London Math. Soc. Lecture Notes, v.{\bf 350}, 
pp. 315--427, Cambridge University Press, 2008.   

\bibitem {BYP} I. Ben Yaacov and A.R. Pedersen, 
{A proof of completeness for 
continuous first-order logic.} J. Symbolic Logic {\bf 75}(2010), 168--190. 

\bibitem{BYU} I. Ben Yaacov and A. Usvyatsov, {Continuous first order logic 
and local stability.} Trans. Amer. Math. Soc. {\bf 362}(2010), 5213--5259.  
\bibitem{CCGP} G. Cattaneo, M.~L. Dalla Chiara, R. Giuntini and F. Paoli, 
{\em Quantum logic and nonclassical logics}. 
In: Handbook of Quantum Logic and Quantum Structures, Quantum Logic.  
(K. Engesser, D.M. Gabbay, D. Lehmann, eds.), 
pp. 127--226, Elsevier B.V., 2009. 

\bibitem{ber} A. Berenstein, {Hilbert spaces with generic group of automorphisms.} Arch. Math. Logic {\bf 46}(2007), 289--299. 

\bibitem{CDE} J.  Carrion, M.  Dadarlat, and C. Eckhardt, 
{On groups with quasidiagonal $C^*$-algebras.} J. Funct. Anal. 
{\bf 265}(2013), no. 1, 135--152. 

\bibitem{CGLTh}  M. De Chiffre, L. Glebsky, A. Lubotzky and A. Thom, 
{\em Stability, cohomology vanishing, and non-approximable groups}, 
Arxiv: 1711.10238. 

\bibitem{DJK} H. Derksen, E. Jeandel and P. Koiran, 
{Quantum automata and algebraic groups.} 
J. of Symbolic Computations {\bf 39}(2005), 357--371. 

\bibitem{DGP} F. Didehvar, K. Ghasemloo and M. Pourmahdian, 
{Effectiveness in RPL, with applications to continuous logic.}  
Ann Pure Appl. Logic {\bf 161}(2010), 789--799. 

\bibitem{EH} A. Edalat and R. Heckmann, 
{A computational model for metric spaces.}  
Theoretical Computer Science {\bf 193}(1998), 53--73.

\bibitem{ES} A. Edalat and Ph. S\"{u}nderhauf, 
{A domain theoretic approach to computability on the real line.}  
Theoretical Computer Science {\bf 210}(1999), 73--98.  

\bibitem{ElSa} G. Elek and E. Szabo,  
{Hyperlinearity, essentially free actions and $L^2$-invariants. 
The sofic property.}  Math. Ann. {\bf 332}(2005), 421--441.   

\bibitem{ershov} Yu.~L. Ershov, \emph{Decision Problems and Constructive Models.} Nauka, Moscow, 1980. 

\bibitem{FHS} I. Farah, B. Hart and D. Sherman, {Model theory of operator algebras I: stability.}  
Bull. London. Math. Soc. {\bf 45}(2013), no.4, 825--838. 

\bibitem{G} L. Glebsky,  {Approximation of groups, characterisations of sofic groups, 
and equations over groups.}  J. Algebra {\bf 477} (2017), 147--162. 

\bibitem{GR} L. Glebsky, L.~M.  Rivera,  {Sofic groups and profinite topology on free groups.}   
J. Algebra {\bf 320}(2008), 3512--3518. 

\bibitem{GH} I. Goldbring and B. Hart, {Computability and the Connes embedding problem.} Bull. Symb. Log. {\bf 22} (2016), no. 2, 238 --248.  

\bibitem{gruska} J. Gruska, \emph{Quantum Computing}. McGraw-Hill International (UK) Limited, London, 1999. 

\bibitem{iovino} J. Iovino, \emph{Stable Banach space structures, I: fundamentals.}  In: Models, algebras and proofs (Bogot\'{a}, 1995) 
Lecture Notes in Pure and Applied Mathematics, 203, pp. 77--95 Marcel Dekker, New York 1999. 

\bibitem{sasza2} A. Ivanov,  {Locally compact groups which are separably categorical structures.}   
Arch. Math. Log. {\bf 56}(2017) 67 - 78.  

\bibitem{ksv} A.~Yu.  Kitaev, A.~H.  Shen and M.~N. Vyalyi, \emph{Classical and quantum computations.} 
Graduate Studies in Mathematics, AMS, 2002.  

\bibitem{korchagin}  A. Korchagin, \emph{MF-property for countable discrete groups.} Archiv: 1704.06906.

\bibitem{LM} A. ~H.  Lachlan and K.~W.  Madison, {Computable fields and arithmetically definable ordered fields.}  
Proc. Amer. Math. Soc. {\bf 24}(1970), 803--807. 

\bibitem{MC} C. Moore and J.~P.  Crutchfield, {Quantum automata and quantum grammars.}  
Theoretical Computer Sci. {\bf 237}(2000), 275--306.  

\bibitem{novikov} P.~S.  Novikov, \emph{On the algorithmic unsolvability of the word problem in group theory.} 
Trudy Mat. Inst. im. Steklov. no. 44. 
Izdat. Akad. Nauk SSSR, Moscow, 1955, 143 pp.

\bibitem{pestov} V.  Pestov, {Hyperlinear and sofic groups: a brief guide.}    
Bull. Symb. Logic {\bf 14}(2008),  449--480.   

\bibitem{pykacz} J.  Pykacz, { Unification of two approaches to quantum logic: 
every Bikhoff-von Neumann quantum logic is a partial infinite-valued {\L}ukasiewicz logic.}  
Studia Logica {\bf 95}(2010), 5--20.  

\bibitem{rosendal} Ch.  Rosendal, {A topological version of 
the Bergman property.} Forum Math. {\bf 21} (2009), 299--332. 

\bibitem{thom} A. Thom,  {About the metric approximation of Higman's group.}   
J. Group Theory  {\bf 15}(2012), 301--310.  

\bibitem{tikuisis}  A. Tikuisis, S. White, W Winter, 
{Quasidiagonality of nuclear ${\bf C}^*$-algebras.} Ann. of Math. (2) 
{\bf 185}(2017), no. 1, 229--284. 

\bibitem{GV} A.M Vershik, E.I. Gordon, 
{Groups that are locally embeddable in the class of finite groups.} 
Algebra i Analiz {\bf 9} (1997), no. 1, 71--97 
\end{thebibliography}
\end{document}